\newcommand{\btp}{\begin{tikzpicture}[baseline=-.25em,scale=0.25,line width=0.7pt]}
\newcommand{\etp}{\end{tikzpicture}}
\numberwithin{equation}{section}
\setlist[itemize,1]{leftmargin=.4in}
\setlist[enumerate,1]{leftmargin=.4in,label=(\roman*)}
\setlist[description,1]{leftmargin=.4in,font=\normalfont\itshape}
\newcommand{\nc}{\newcommand}
\newcommand{\rnc}{\renewcommand}
\nc{\andreacomment}[1]{{\color{red}//{#1}//}}
\nc{\bartcomment}[1]{{\color{green}//{#1}//}}
\newtheorem{theorem}{Theorem}[section]
\newtheorem{proposition}[theorem]{Proposition}
\newtheorem{corollary}[theorem]{Corollary}
\newtheorem{lemma}[theorem]{Lemma}
\newtheorem*{theorem*}{Theorem}
\theoremstyle{definition}
\newtheorem{definition}[theorem]{Definition}
\newtheorem{example}[theorem]{Example}
\newtheorem*{defn*}{Definition}
\newtheorem*{exam*}{Example}
\newtheorem{remark}[theorem]{Remark}
\newcommand{\rmkend}{\ensuremath{\diameter}}
\newcommand{\examend}{\ensuremath{\diameter}}
\newcommand{\defnend}{\ensuremath{\diameter}}
\nc{\spl}[1]{\begin{align}\begin{split}#1\end{split}\end{align}}
\nc{\eqd}[1]{\begin{equation}\begin{aligned}#1\end{aligned}\end{equation}}
\nc{\eqa}[1]{\begin{align}#1\end{align}}
\nc{\eqg}[1]{\begin{gather}#1\end{gather}}
\nc{\eq}[1]{\begin{equation}#1\end{equation}}
\nc{\eqn}[1]{\begin{align*}#1\end{align*}}
\nc{\eqrefs}[2]{\text{(\ref{#1}-\ref{#2})}}
\nc\el{\nonumber\\}
\nc\nn{\nonumber}
\nc{\litem}[1]{\medskip\noindent\hspace{-13pt} $\circ$ #1:}
\nc{\scsop}[1]{\scriptstyle{\sf #1}}
\nc{\mf}{\mathfrak}
\nc{\mc}{\mathcal}
\nc{\ms}{\mathsf}
\nc{\bb}{\mathbb}
\nc{\ol}{\overline}
\nc{\ul}{\underline}
\nc{\wb}{\overline}
\nc{\wc}{\widecheck}
\nc{\wh}{\widehat}
\nc{\tl}{\tilde}
\nc{\wt}{\widetilde}
\nc{\al}{\alpha}
\nc{\be}{\beta}
\nc{\ga}{\gamma}
\nc{\Ga}{\Gamma}
\nc{\del}{\delta}
\nc{\Del}{\Delta}
\nc{\eps}{\epsilon}
\nc{\veps}{\varepsilon}
\nc{\Eps}{\Epsilon}
\nc{\ze}{\zeta}
\nc{\ka}{\kappa}
\nc{\la}{\lambda}
\nc{\La}{\Lambda}
\nc{\si}{\sigma}
\nc{\Si}{\Sigma}
\nc{\ups}{\upsilon}
\nc{\Ups}{\Upsilon}
\nc{\vphi}{\varphi}
\nc{\om}{\omega}
\nc{\Om}{\Omega}
\nc{\A}{\mathbb{A}}
\nc{\C}{\mathbb{C}}
\nc{\F}{\mathbb{F}}
\nc{\K}{\mathbb{K}}
\nc{\N}{\mathbb{N}}
\nc{\Q}{\mathbb{Q}}
\nc{\R}{\mathbb{R}}
\nc{\Z}{\mathbb{Z}}
\nc{\mbA}{\mathbf{A}}
\nc{\mbb}{\mathbf{b}}
\nc{\mbB}{\mathbf{B}}
\nc{\mbc}{\mathbf{c}}
\nc{\mbC}{\mathbf{C}}
\nc{\mbd}{\mathbf{d}}
\nc{\mbD}{\mathbf{D}}
\nc{\mbe}{\mathbf{e}}
\nc{\mbE}{\mathbf{E}}
\nc{\mbf}{\mathbf{f}}
\nc{\mbF}{\mathbf{F}}
\nc{\mbg}{\mathbf{g}}
\nc{\mbH}{\mathbf{H}}
\nc{\mbh}{\mathbf{h}}
\nc{\mbi}{\mathbf{i}}
\nc{\mbI}{\mathbf{I}}
\nc{\mbj}{\mathbf{j}}
\nc{\mbJ}{\mathbf{J}}
\nc{\mbk}{\mathbf{k}}
\nc{\mbK}{\mathbf{K}}
\nc{\mbL}{\mathbf{L}}
\nc{\mbM}{\mathbf{M}}
\nc{\mbQ}{\mathbf{Q}}
\nc{\mbq}{\mathbf{q}}
\nc{\mbr}{\mathbf{r}}
\nc{\mbT}{\mathbf{T}}
\nc{\mbu}{\mathbf{u}}
\nc{\mbU}{\mathbf{U}}
\nc{\mbv}{\mathbf{v}}
\nc{\mbV}{\mathbf{V}}
\nc{\mbw}{\mathbf{w}}
\nc{\mbW}{\mathbf{W}}
\nc{\mbX}{\mathbf{X}}
\nc{\mbY}{\mathbf{Y}}
\nc{\mbZ}{\mathbf{Z}}
\nc{\mcA}{\mathcal{A}}
\nc{\mcB}{\mathcal{B}}
\nc{\mcC}{\mathcal{C}}
\nc{\mcD}{\mathcal{D}}
\nc{\mcE}{\mathcal{E}}
\nc{\mcF}{\mathcal{F}}
\nc{\mcH}{\mathcal{H}}
\nc{\mcK}{\mathcal{K}}
\nc{\mcO}{\mathcal{O}}
\nc{\mcQ}{\mathcal{Q}}
\nc{\mcR}{\mathcal{R}}
\nc{\mcS}{\mathcal{S}}
\nc{\mcP}{\mathcal{P}}
\nc{\mcU}{\mathcal{U}}
\nc{\mcT}{\mathcal{T}}
\nc{\mcV}{\mathcal{V}}
\nc{\mcY}{\mathcal{Y}}
\nc{\mcZ}{\mathcal{Z}}
\nc{\mfa}{\mathfrak{a}}
\nc{\mfA}{\mathfrak{A}}
\nc{\mfb}{\mathfrak{b}}
\nc{\mfB}{\mathfrak{B}}
\nc{\mfc}{\mathfrak{c}}
\nc{\mfC}{\mathfrak{C}}
\nc{\mfd}{\mathfrak{d}}
\nc{\mfD}{\mathfrak{D}}
\nc{\mfe}{\mathfrak{e}}
\nc{\mfE}{\mathfrak{E}}
\nc{\mff}{\mathfrak{f}}
\nc{\mfF}{\mathfrak{F}}
\nc{\mfg}{\mathfrak{g}}
\nc{\mfh}{\mathfrak{h}}
\nc{\mfH}{\mathfrak{H}}
\nc{\mfi}{\mathfrak{i}}
\nc{\mfJ}{\mathfrak{J}}
\nc{\mfk}{\mathfrak{k}}
\nc{\mfK}{\mathfrak{K}}
\nc{\mfl}{\mathfrak{l}}
\nc{\mfL}{\mathfrak{L}}
\nc{\mfM}{\mathfrak{M}}
\nc{\mfm}{\mathfrak{m}}
\nc{\mfn}{\mathfrak{n}}
\nc{\mfN}{\mathfrak{N}}
\nc{\mfo}{\mathfrak{o}}
\nc{\mfP}{\mathfrak{P}}
\nc{\mfQ}{\mathfrak{Q}}
\nc{\mfr}{\mathfrak{r}}
\nc{\mfS}{\mathfrak{S}}
\nc{\mft}{\mathfrak{t}}
\nc{\mfU}{\mathfrak{U}}
\nc{\mfu}{\mathfrak{u}}
\nc{\mfV}{\mathfrak{V}}
\nc{\mfX}{\mathfrak{X}}
\nc{\mfY}{\mathfrak{Y}}
\nc{\mfz}{\mathfrak{z}}
\nc{\msA}{\mathsf{A}}
\nc{\msB}{\mathsf{B}}
\nc{\msC}{\mathsf{C}}
\nc{\msc}{\mathsf{c}}
\nc{\msD}{\mathsf{D}}
\nc{\msd}{\mathsf{d}}
\nc{\mse}{\mathsf{e}}
\nc{\msw}{\mathsf{w}}
\nc{\msq}{\mathsf{q}}
\nc{\msg}{\mathsf{g}}
\nc{\msE}{\mathsf{E}}
\nc{\msf}{\mathsf{f}}
\nc{\msF}{\mathsf{F}}
\nc{\msh}{\mathsf{h}}
\nc{\msH}{\mathsf{H}}
\nc{\msI}{\mathsf{I}}
\nc{\msJ}{\mathsf{J}}
\nc{\msK}{\mathsf{K}}
\nc{\msL}{\mathsf{L}}
\nc{\msP}{\mathsf{P}}
\nc{\msQ}{\mathsf{Q}}
\nc{\msR}{\mathsf{R}}
\nc{\mss}{\mathsf{s}}
\nc{\msS}{\mathsf{S}}
\nc{\msT}{\mathsf{T}}
\nc{\msU}{\mathsf{U}}
\nc{\msV}{\mathsf{V}}
\nc{\msX}{\mathsf{X}}
\nc{\msY}{\mathsf{Y}}
\nc{\msZ}{\mathsf{Z}}
\nc{\mtc}{\mathtt{c}}
\nc{\mtD}{\mathtt{D}}
\nc{\mte}{\mathtt{e}}
\nc{\mtE}{\mathtt{E}}
\nc{\mtf}{\mathtt{f}}
\nc{\mtF}{\mathtt{F}}
\nc{\mth}{\mathtt{h}}
\nc{\mtH}{\mathtt{H}}
\nc{\mtV}{\mathtt{V}}
\nc{\mtX}{\mathtt{X}}
\nc{\mty}{\mathtt{y}}
\nc{\uA}{\underline{\mathcal{A}}}
\nc{\uB}{\underline{\mathcal{B}}}
\nc{\uC}{\underline{\mathcal{C}}}
\nc{\uD}{\underline{\mathcal{D}}}
\nc{\uE}{\underline{\mathcal{E}}}
\nc{\uF}{\underline{\mathcal{F}}}
\nc{\uH}{\underline{\mathcal{H}}}
\nc{\uK}{\underline{\mathcal{K}}}
\nc{\uO}{\underline{\mathcal{O}}}
\nc{\uQ}{\underline{\mathcal{Q}}}
\nc{\uR}{\underline{\mathcal{R}}}
\nc{\uS}{\underline{\mathcal{S}}}
\nc{\uP}{\underline{\mathcal{P}}}
\nc{\uU}{\underline{\mathcal{U}}}
\nc{\uT}{\underline{\mathcal{T}}}
\nc{\uV}{\underline{\mathcal{V}}}
\nc{\uY}{\underline{\mathcal{Y}}}
\nc{\uZ}{\underline{\mathcal{Z}}}
\nc{\bv}{\breve{v}}
\nc{\tc}{\tilde{c}}
\nc{\tr}{\tilde{r}}
\nc{\ts}{\tilde{s}}
\nc{\tv}{\tilde{v}}
\nc{\sal}{\check{\alpha}}
\nc{\cbeta}{\check{\beta}}
\nc{\cd}{\check{d}}
\nc{\cf}{\check{f}}
\nc{\cdelta}{\check{\delta}}
\nc{\ccr}{\check{r}}
\nc{\cs}{\check{s}}
\nc{\bT}{\bar{T}}
\nc{\bt}{\bar{t}}
\nc{\cT}{\mathcal{T}}
\nc{\bcT}{\bar{\mathcal{T}}}
\nc{\bct}{\bar{\tau}}
\nc{\ad}{\mathsf{ad}}
\nc{\adl}{\mathsf{ad}_\mathsf{l}}
\nc{\adr}{\mathsf{ad}_\mathsf{r}}
\nc{\alg}{\mathsf{alg}}
\nc{\bialg}{\mathsf{bialg}}
\nc{\cl}{\mathsf{cl}}
\nc{\codim}{\mathsf{codim}}
\nc{\cop}{\mathsf{cop}}
\nc{\cork}{\mathsf{cork}}
\rnc{\cos}{\mathsf{cos}}
\rnc{\dim}{\mathsf{dim}}
\rnc{\exp}{\mathsf{exp}}
\nc{\ext}{\mathsf{ext}}
\nc{\fin}{\mathsf{fin}}
\nc{\gr}{\mathsf{gr}}
\nc{\grp}{\mathsf{grp}}
\nc{\h}{\mathsf{ht}}
\nc{\id}{\mathsf{id}}
\rnc{\int}{\mathsf{int}}
\rnc{\mod}{\mathsf{mod}}
\nc{\oi}{\mathsf{oi}}
\nc{\op}{\mathsf{op}}
\nc{\rev}{\mathsf{rev}}
\nc{\rk}{\mathsf{rk}}
\nc{\sgn}{\mathsf{sgn}}
\rnc{\sin}{\mathsf{sin}}
\nc{\supp}{\mathsf{supp}}
\rnc{\t}{\mathsf{t}}
\rnc{\tan}{\mathsf{tan}}
\nc{\tw}{\mathsf{tw}}
\nc{\Ad}{{\mathsf{Ad}}}
\nc{\Ann}{\mathsf{Ann}}
\nc{\Aut}{\mathsf{Aut}}
\nc{\Cyc}{\mathsf{Cyc}}
\nc{\Dih}{\mathsf{Dih}}
\nc{\End}{\mathsf{End}}
\nc{\Ext}{\mathsf{Ext}}
\nc{\For}{\mathsf{For}}
\nc{\Fun}{\mathsf{Fun}}
\nc{\Hom}{\mathsf{Hom}}
\nc{\Id}{\mathsf{Id}}
\nc{\Ima}{\mathsf{Im}}
\nc{\Ind}{\mathsf{Ind}}
\nc{\Inn}{\mathsf{Inn}}
\nc{\Ker}{\mathsf{Ker}}
\nc{\Ob}{\mathsf{Ob}}
\nc{\Out}{\mathsf{Out}}
\nc{\Pow}{\mathsf{Pow}}
\rnc{\Re}{\mathsf{Re}}
\nc{\Rep}{\mathsf{Rep}}
\nc{\Repfd}{\mathsf{Rep}_{\scsop{fd}}}
\nc{\RHom}{\mathsf{RHom}}
\nc{\Ser}{\mathsf{Serre}}
\nc{\Sym}{\mathsf{Sym}}
\nc{\Tr}{\mathsf{Tr}}
\rnc{\Vec}{\mathsf{Vec}}
\nc{\Vect}{\mathsf{Vect}}
\nc{\Modfd}{\mathsf{mod}^{\scriptstyle\mathsf{fd}}}%{\mathsf{Mod^{fd}}}
\nc{\Mod}{\mathsf{Mod}}
\nc{\CD}{\mathsf{CDec}}
\nc{\GSat}{\mathsf{GSat}}
\nc{\Sat}{\mathsf{Sat}}
\nc{\WSat}{\mathsf{WSat}}
\nc{\mfgl}{\mathfrak{g}\mathfrak{l}}
\nc{\mfsl}{\mathfrak{s}\mathfrak{l}}
\nc{\mfso}{\mathfrak{s}\mathfrak{o}}
\nc{\mfsp}{\mathfrak{s}\mathfrak{p}}
\nc{\GL}{\mathsf{GL}}
\nc{\SL}{\mathsf{SL}}
\nc{\Sp}{\mathsf{span}}
\nc{\bmodN}{\bmod \hspace{-3pt} '}
\nc{\Idiff}{I_{\mathsf{diff}}}
\nc{\Ieq}{\indI_{\mathsf{eq}}}
\nc{\Ins}{\indI_{\mathsf{ns}}}
\nc{\Insf}{\indI_{\mathsf{nsf}}}
\nc{\lrh}{\leftrightharpoons}
\nc{\iso}{\stackrel{\sim}{\longrightarrow}}
\nc{\liso}{\stackrel{\sim}{\longleftarrow}}
\nc{\lra}{\longrightarrow}
\nc{\ra}{\rightarrow}
\nc{\into}{\hookrightarrow}
\nc{\onto}{\twoheadrightarrow}
\nc{\lan}{\langle}
\nc{\ran}{\rangle}
\nc{\ot}{\otimes}
\nc{\ten}{\otimes}
\nc{\red}{\color{red}}
\nc{\blu}{\color{blue}}
\nc{\brn}{\color{brown}}
\nc{\grn}{\color{green!55!black}}
\nc{\gry}{\color{gray}}
\nc{\ie}{{\em i.e.},\ }
\nc{\eg}{{\em e.g.},\ }
\nc{\qu}{\quad}
\nc{\qq}{\qquad}
\nc{\tx}[1]{\qu\text{#1}\qu}
\nc{\aand}{\qquad\mbox{and}\qquad}
\renewcommand{\,}{\kern 0.1em} % standard is \kern 0.16667em
\nc{\Omit}[1]{}
\newcommand*\rel@kern[1]{\kern#1\dimexpr\macc@kerna}
\rnc\appendixname{}
\nc{\FEnd}[1]{\mathsf{End}(#1)}
\nc{\FAlg}[2]{\mcE_{#1}^{#2}}
\nc{\FF}[1]{F_{#1}}
\nc{\uFF}[1]{\ul{F}_{#1}}
\nc{\tm}[1]{{\color{magenta}#1}}
\nc{\uG}{\ul{G}}
\nc{\pairing}[2]{\langle#1,#2\rangle}
\nc{\bifo}[2]{(#1,#2)}
\nc{\trunc}[3]{{#1}_{#2\to#3}}
\nc{\RG}{R^G}
\nc{\RGG}{R^{GG}}
\nc{\wtRG}{R_{\km}^G}
\nc{\km}{k}
\nc{\topS}[1]{S_{#1}}
\nc{\ulm}{\ul{m}}
\nc{\dgr}{\mathbb{D}}
\nc{\Br}[1]{\mathcal{B}_{#1}}
\nc{\BDm}{\mathcal{B}_{\dgr,\ulm}}
\nc{\gpS}[1]{\mathsf{s}_{#1}}
\nc{\vspan}[2]{\mathsf{span}_{#1}(#2)}
\nc{\indI}{I} % macro for index set I
\nc{\indJ}{J} % macro for index set J
\nc{\indX}{X} % macro for index set X
\nc{\indY}{Y} % macro for index set Y
\nc{\gcmA}{{A}} % macro for Cartan matrix A
\nc{\gcmB}{{B}} % macro for Cartan matrix B (symmetric)
\nc{\gcmD}{{D}} % macro for Cartan matrix D (diagonal correction factors)
\nc{\n}{\mfn}
\nc{\drc}[1]{\delta_{#1}}
\nc{\iip}[2]{(#1,#2)} %bilinear form on h - round bracket vs Drinfeld pairing angle bracket
\nc{\rootsys}{\Phi} % rootsystem
\nc{\Qlat}{{\mathsf{Q}}}
\nc{\Plat}{{\mathsf{P}}}
\nc{\Qlate}{{\mathsf{Q}_\ext}}
\nc{\Plate}{{\mathsf{P}_\ext}}
\nc{\Pie}{\Pi_\ext}
\nc{\brS}[1]{S_{#1}}
\nc{\brSg}[1]{\wt{s}_{#1}}
\nc{\wtKM}{\Upsilon} %{\wt{K}}
\nc{\KM}[1]{K_{#1}}
\nc{\gKM}[2]{K_{#1}^{#2}}
\nc{\wtR}{\Xi}
\nc{\gfin}{\dot{\mfg}}
\nc{\shift}[1]{\Sigma_{#1}}
\nc{\UqLg}{U_qL\gfin}
\nc{\shrep}[2]{{#1}(#2)}
\nc{\fml}[2]{{#1}[\negthinspace[#2]\negthinspace]}
\nc{\Lfml}[2]{{#1}(\negthinspace(#2)\negthinspace)}
\nc{\sRM}[2]{R_{#1}(#2)}
\nc{\sKM}[2]{K_{#1}(#2)}
\nc{\UqLsl}[1]{U_qL\mathfrak{sl}_{#1}}
\nc{\Uqsl}[1]{U_q\mathfrak{sl}_{#1}}
\nc{\asl}[1]{\widehat{\mathfrak{sl}}_{#1}}
\nc{\ev}[1]{\mathsf{ev}_{#1}}
\nc{\wtev}[1]{\wt{\mathsf{ev}}_{#1}}
\nc{\prin}{\scriptstyle{\mathsf{pr}}}
\nc{\Ons}{U_q\mfk} %{\mathbf{O}_q^{{\bm\ga}}} %,{\bm\si}}}
\nc{\AOns}{U_q\mfk} %{\mathbf{A}_q} %^{{\bm\ga},{\bm\si}}}
\nc{\IOns}{U_q\mfk} %{\mathbf{I}_q} %^{{\bm\ga},{\bm\si}}}
\begin{document}

\title{Universal K-matrices for quantum Kac-Moody algebras}

\author{Andrea Appel}
\address{Dipartimento di Scienze Matematiche, Fisiche e Informatiche, Universit\`a di Parma,
Parco Area delle Scienze 53/A, 43124 Parma, Italy}
\email{andrea.appel@unipr.it}

\author{Bart Vlaar}
\address{Department of Mathematics, Heriot--Watt University, Edinburgh, EH14 4AS, UK \& Max Planck Institute for Mathematics, Vivatsgasse 7, 53111 Bonn, Germany}
\email{vlaar@mpim-bonn.mpg.de}

\subjclass[2020]{
%81R50, %quantum groups, algebraic methods
81R10, %quantum groups from physics KM, Virasoro, W-algebras, current, etc.
17B37, %quantum groups again as QUE
17B67, %KM, affine, etc.
16T10% bialgebras
%17B81 %applications of Lie algebras to physics
}
% see here for 2020: https://mathscinet.ams.org/msnhtml/msc2020.pdf

\thanks{The first author was supported in part by the ERC Grant 637618 and the Programme \emph{FIL} of the University of Parma co-sponsored by Fondazione Cariparma. The second author was supported in part by the EPSRC Grant EP/R009465/1.}	
	
\maketitle

\begin{abstract} 
We introduce the notion of a \emph{cylindrical} bialgebra, which is a quasitriangular bialgebra $H$ endowed with a universal K-matrix, \ie a universal solution of a generalized reflection equation, yielding an action of cylindrical braid groups on tensor products of its representations.
We prove that new examples of such universal K-matrices arise from quantum symmetric pairs of Kac-Moody type and depend upon the choice of 
a pair of generalized Satake diagrams. 
In finite type, this yields a refinement of a result obtained by Balagovi\'c and Kolb, producing a family of non-equivalent solutions interpolating between the {quasi}-K-matrix and the {\em full} universal K-matrix.
Finally, we prove that this construction yields formal 
solutions of the generalized reflection equation with a spectral parameter 
in the case of finite-dimensional representations over
the quantum affine algebra $U_qL\mathfrak{sl}_{2}$.

\end{abstract}

% TOC
\setcounter{tocdepth}{1} 
\tableofcontents

\section{Introduction}

\subsection{}

In this paper, we extend the construction of the universal K-matrix for quantum groups corresponding to complex semisimple Lie algebras obtained by Balagovi\'{c} and Kolb in \cite{BK19} to the case of an arbitrary symmetrizable Kac-Moody algebra. 
Our approach relies on the notion of a \emph{cylindrical} bialgebra. 
Informally, this is a bialgebra endowed with a distinguished solution of a generalized reflection equation, which yields a natural action of cylindrical braid groups on the tensor products of its representations and generalizes the notion of {\em cylinder twist} introduced by tom Dieck and H\"{a}ring-Oldenburg and later used by Balagovi\'{c} and Kolb.
It bears a simple, yet crucial, difference with the latter in that the relevant reflection equation is twisted by an algebra automorphism which does not necessarily preserve the coproduct. 
However, its defect in being a morphism of quasitriangular bialgebras is controlled by a Drinfeld twist.

This more general framework allows us to construct new examples of universal K-matrices in the context of quantum Kac-Moody algebras. 
More specifically, given a symmetrizable Kac-Moody algebra $\mfg$ 
and an additional combinatorial datum (a pair of generalized Satake diagrams), we construct an algebra automorphism $\psi$ of $U_q\mfg$ and an operator $K$ satisfying the generalized reflection equation
\begin{align}\label{eq:gen-re}
	\begin{split}
		(\psi \ot \psi)(R_{21}) \cdot (1 \ot{\KM{}}) \cdot (\psi \ot \id)(R) \cdot (\KM{} \ot 1)=
		(\KM{} \ot 1) \cdot (\id \ot \psi)(R_{21}) \cdot (1 \ot{\KM{}}) \cdot R\,,
	\end{split}
\end{align}
where $R$ is the universal R-matrix of $U_q\mfg$. 
In finite type, our construction leads to new examples of non-equivalent universal K-matrices, where the Balagovi\'c-Kolb universal K-matrix is recovered as a special case.

\subsection{}
Reflection equations 
received much attention in the mathematical physics literature from the 1980s onwards, in particular in relation to quantum integrability, see \eg \cite{Ch84,Sk88,KS92,GZ94}.  
In this case, the reflection equation depends on an additional parameter, referred to as the \emph{spectral} parameter.
In the most general case, it takes the following form: 
\eq{ \label{eq:spectral-RE-intro}
		R^{--}_{21}(\tfrac{w}{z})  \cdot \id \ten K(w) \cdot 
	R^{-+}(zw) \cdot K(z) \ten \id 
	=
		K(z) \ten \id \cdot R^{-+}_{21}(zw) \cdot \id \ten K(w) \cdot R^{++}(\tfrac{w}{z})\,,
} 
where $R^{++}(z)$, $R^{-+}(z)$, and $R^{--}(z)$ are three, possibly distinct, solutions of a system of Yang-Baxter type equations with a spectral parameter, see \cite[Eqs.~(4.12)-(4.14)]{Ch92}.

Examples of matrix solutions of the reflection equation have been constucted in the context of finite-dimensional representations of quantum affine algebras and {\em quantum affine symmetric pairs}. 
In this case, the operators $R^{\pm\pm}(z)=R(z)$ are often assumed to be equal and determined by the action of the universal R-matrix \cite{Dr86}, thus yielding the \emph{standard} reflection equation.
Moreover, $K(z)$ is generally obtained as an intertwiner of the form
\[ 
K(z)\colon V(z)\to V(\tfrac{1}{z}) 
\] 
with respect to a distinguished coideal subalgebra, see \eg~\cite{DG02,DM03,RV16,BTs18}.
Our construction is tailored to provide a universal solution to this problem in greater generality. 
Namely, in the case of quantum affine algebras, the universal $K$-matrix converges on a finite-dimensional representation $V$ to a formal intertwiner $K_V(z):V(z)\to V^{\psi}(\tfrac{1}{z})$ and yields a solution of Cherednik's generalized reflection equation. 
A major advantage of our approach is that, by carefully chosing the automorphism $\psi$, the latter reduces to the standard case and our construction recovers many of the previously known solutions. 
In the last section of this paper, we consider the case of quantum affine $\mathfrak{sl}_2$, while the general (untwisted) case is discussed in \cite{AV22}.\\

In the rest of this introduction, we review the problem in more detail and outline our main results.

\subsection{}

Let $\mfg$ be a symmetrizable Kac-Moody algebra and $U_q\mfg$ the corresponding Drinfeld-Jimbo quantum group \cite{Dr85,Ji86,Lus94}.
It is well-known that $U_q\mfg$ is a non-commutative Hopf algebra, which, up to completion, is equipped with a quasitriangular structure given by the universal R-matrix $R$. 
The use of completion is made necessary by the fact that $R$ is defined only on certain tensor products of $U_q\mfg$-modules, \eg category $\mc O$ modules.\\ 

The quantum group $U_q\mfg$ is naturally endowed with a
family of distinguished subalgebras, which are not Hopf subalgebras,
but only (one-sided) coideal subalgebras.
Let $\mfk \coloneqq \mfg^\theta$ the fixed-point subalgebra of a Lie algebra involution $\theta$.
In finite type, building on work by Gavrilik and Klimyk \cite{GK91} and Koornwinder \cite{K93} in special cases, Noumi, Sugitani, and Dijkhuizen \cite{NS95,NDS97} and, independently, Letzter \cite{Le99,Le02,Le03} proved that the Hopf subalgebra $U\mfk \subseteq U\mfg$ is naturally deformed into a coideal subalgebra $U_q\mfk\subseteq U_q\mfg$, which we refer to as a \emph{quantum fixed-point coideal subalgebra}.
For symmetrizable Kac-Moody algebras, the construction of $U_q\mfk$ was obtained by Kolb in \cite{Ko14}, in the case of $\theta$ being an automorphism \emph{of the second kind}\footnote{
An automorphism $\theta\colon \mfg\to\mfg$ is of the \emph{second kind} if $\theta(\mfb^+) \cap \mfb^+$ is finite-dimensional, where $\mfb^+ \subset \mfg$ denotes the positive Borel subalgebra, see \eg \cite[4.6]{KW92}.
}.

%%%%%%%%%%%%%%%%%%%%%%%%%%%

\subsection{} %quasi-K-matrix
For certain quantized fixed-point subalgebras of finite type, Ehrig and Stroppel
\cite{ES18} and Bao and Wang \cite{BW18} developed a coideal
analogue of the theory of canonical basis (cf.~\cite{Kas90, Lus90}).
Central in their results is the use of a coideal version of Lusztig's bar involution on $U_q\mfg$, \ie a bar involution on $U_q\mfk$, which we simply refer to as the {\em internal} bar involution. This yields a canonical element, which is known as the \emph{quasi-K-matrix}, which intertwines between 
Lusztig's bar involution and the internal bar involution on $U_q\mfk$ (see in particular \cite[Sec.~2.5]{BW18}).
In \cite{BK19}, Balagovi\'{c} and Kolb extended the construction of the quasi-K-matrix to 
every quantized fixed-point subalgebra $U_q\mfk$ of the symmetrizable Kac-Moody algebra $U_q\mfg$. 
In particular, in finite type, this led to the construction of a universal K-matrix.

\subsection{} %Kac-Moody and obstructions

The main goal of the present paper is to extend the construction of the universal K-matrix to the case of a symmetrizable Kac-Moody algebra.
Note that the formula of the full universal K-matrix in \cite{BK19} is not valid for infinite-dimensional Kac-Moody algebras, since it relies on the quantum Weyl group operator corresponding to longest element of the Weyl group, which only exists if $\mfg$ is finite-dimensional. 
The main property of this operator is to provide a description of the quasi-R-matrix as a multiplicative coboundary (cf.~\cite{KR90} and Eq. \eqref{quasiRX:factorized:1}), \ie it is essentially a half-balance on $U_q\mfg$
\cite{KT09, ST09},  and it is crucial in the construction of universal solutions of the reflection equation. There have been various attempts to define this operator for infinite-dimensional Kac-Moody algebras (\eg \cite{Ti10}), but none is suited to our purposes. 
More importantly, we aim to construct universal K-matrices which specialize to finite-dimensional representations of quantum affine algebras and yield solutions of generalized reflection equations with a spectral parameter. In particular, by restriction, the automorphism $\psi$ should induce the inversion of the spectral parameter on finite-dimensional representations.
This cannot be achieved within the existing framework of cylinder braided subalgebras used in \cite{BK19}, where $\psi$ is required to be an automorphism of quasitriangular bialgebras.

\subsection{} %Cylindrical bialgebras

Our proposal is to bypass these obstructions altogether by adopting a new framework, which does not require the use of a global half-balance, while providing a generalization of the notion of cylinder twist. 
This prompts the definition of \emph{cylindrical bialgebras} (cf. Definition \ref{def:annular-bialg}). Roughly, this is 
the datum $(H, R, \psi, J, K)$, where $(H,R)$ is a quasitriangular bialgebra, $\psi:H\to H$ is an algebra automorphism, $J\in H\ot H$ a Drinfeld twist such that $H^{\cop,\psi} = H_{J}$, and finally $K\in H$ is an invertible
element satisfying the coproduct identity
\begin{equation*}%\label{eq:coprod-intro}
	\Delta(K) = J^{-1} \cdot (1 \ot K) \cdot (\psi \ot \id)(R) \cdot (K \ot 1)\,.
\end{equation*}
In particular, the datum $(\psi,J)$, which we refer to as a {\em twist pair}, is a twisted homomorphism $H^{\cop} \to H$ in the terminology of \cite{Da07}. 
It follows from the coproduct identity that $K$ is indeed a universal K-matrix, as it satisfies the generalized reflection equation \eqref{eq:gen-re}.
In particular, it yields an action of cylindrical braid groups on tensor products of its representations (cf. Proposition \ref{prop:twistedRE}).\\ 

Any automorphism of quasitriangular bialgebras $\varphi:(H,R)\to(H,R)$ automatically gives rise to the twist pair $(\varphi,R_{21}^{-1})$. 
Therefore, our definition recovers as a special case the notion of cylinder twists from \cite{tD98,tDHO98,BK19}. We shall refer to this case as a {\em strongly cylindrical bialgebra}. Note that, choosing the twist pair $(\varphi, R)$, we recover the analogue notion with the opposite convention used \eg in \cite{BZBJ18}.
More generally, in this framework,
we are able to describe a larger pool of operators which naturally appear in representation theory. 
For instance, the notion of a balance is one of the simplest cases of a cylinder twist, studied in detail in \cite{DKM03}. 
In contrast, a half-balance is not a cylinder twist.
However, both balances and half-balances are obtained as examples of solutions of generalized reflection equations in the context of cylindrical bialgebras.

\subsection{}
Our main result is the construction of a family of cylindrical structures on $U_q\mfg$ arising from quantum fixed-point coideal subalgebras. 
In fact, we prove that, given a quantum fixed-point coideal subalgebras $U_q\mfk$ with generalized Satake diagram $(X,\tau)$, there is a natural family of twist pairs $(\psi_{Y,\eta}, \mc R_{Y,\eta})$, indexed by an auxiliary generalized Satake diagram $(Y,\eta)$.
The Drinfeld twist $J=\mc R_{Y,\eta}$ is obtained by a suitable Cartan modification of the parabolic R-matrix corresponding to the subdiagram of finite type $Y$ and allows us to avoid the first obstruction due to the non-existence of a global half-balance in general.
We then adapt the approach of \cite{BK19} to this new setting,
constructing an operator $K_{Y,\eta}$, which acts on integrable category $\mc O$ $U_q\mfg$-modules as a $\psi$-twisted $U_q\mfk$-intertwiner and yields a (topological) cylindrical structure on $U_q\mfg$ (cf. Proposition \ref{thm:twistpair} and Theorem \ref{thm:kYeta:int-Delta}).\\

In finite type, we obtain a refinement of \cite{BK19}.
Suppose $U_q\mfg$ is a quantum group of finite type with Dynkin diagram $I$ and opposition involution $\oi_I$ (\ie diagram automorphism corresponding to the longest element of the Weyl group). Then $(Y,\eta)=(I,\oi_I)$ is a Satake diagram. 
In this case, we recover the universal K-matrix $K_{I,\oi_I}$ obtained by Balagovi\'c and Kolb (up to conventions).
On the other hand, if $(Y,\eta)=(X,\tau)$, then $K_{X,\tau}$ coincides with quasi-K-matrix (up to a Cartan factor).
Therefore, we obtain a discrete family of universal K-matrices interpolating between the Balagovi\'c-Kolb universal K-matrix and the quasi-K-matrix.

\subsection{}
The construction of universal K-matrices for Kac-Moody algebras involves a number of additional generalizations 
and simplifications with respect to the construction given in \cite{BK19}, which we briefly summarize below. The first two  describe the more general setting in which the main results are valid (Proposition \ref{thm:twistpair} and Theorem \ref{thm:kYeta:int-Delta}).

\subsubsection{Quantized pseudo-fixed-point subalgebras}

The K-matrix construction of \cite{BK19} applies to coideal subalgebras $U_q\mfk$ which are q-deformed enveloping algebras of fixed-point subalgebras with respect to an involutive automorphism of $\mfg$. 
In \cite{RV20} this construction was extended to more general subalgebras of $\mfg$, called
\emph{pseudo-fixed-point subalgebras} and defined in terms of \emph{generalized Satake diagrams} (see also~\cite{RV21}).
Note that in this setting the description of the automorphism of $\mfg$ 
and its quantization is somewhat simpler, as one no longer needs to keep track of the correction given by a multiplicative character of the root lattice with values in $\{ \pm 1 \}$ (see Section \ref{s:coideal:1}).
Our construction of universal K-matrices is presented in this more general setting.

\subsubsection{The quasi-K-matrix and parameter constraints} 
In \cite{BK19}, the parameters involved in the definition of $U_q\mfk$ are assumed to be invariant under a particular diagram automorphism (cf.~\cite[Eq.~(7.4)-(7.5)]{BK19}). 
In our approach we do not need this assumption.
Additional constraints on the parameters are imposed in \cite[Sec.~5.4]{BK19} in order to guarantee the existence of an {\em internal}
bar involution on $U_q\mfg$.
The latter is indeed a crucial ingredient in the construction of the quasi-K-matrix given in \cite{BK19}.
In this paper, we provide a construction of the quasi-K-matrix, which does not rely on the internal bar involution and therefore applies to a larger class
of coideal subalgebras. Moreover, as later observed by Kolb in \cite{Ko21}, this construction of the quasi-K-matrix can be used to {\em define} the internal bar involution. 
We obtain this generalization by directly extending the arguments in \cite[Sec.~6]{BK19}, making use of the \emph{fundamental lemma of quantum symmetric pairs} \cite[Thm.~ 4.1]{BW21} and of the simplification discussed in \cite[Sec.~3.5]{DK19}.
Note that in the quasi-split case an alternative construction 
of a (weakly) universal K-matrix without parameter constaints was given in \cite{KY20}.

\subsubsection{Coproduct identity} 
Beyond the quasi-K-matrix, the formula for the universal K-matrix given by Ba\-lagovi\'{c} and Kolb involves the quantum Weyl group operators\footnote{That is, the braid group operators constructed in \eg~\cite[Ch.~5]{Lus94} and given in terms of q-deformed triple exponentials.} and a correcting factor in a completion of the quantum deformed Cartan subalgebra, see \cite[Eq.~(8.1)]{BK19}. 
This makes the computation of the coproduct identity of the universal
K-matrix rather complicated, see \cite[Sec. 8-9, Thm.~9.5]{BK19}.
Following \cite{KT09}, we introduce Cartan-modified quantum Weyl group operators, whose Cartan correction depends upon the choice
of a generalized Satake diagram. These can be thought of as modified
\emph{diagrammatic half-balances} (see Sec.~\ref{ss:diag-half-bal}) and yield 
universal K-matrices whose coproduct identity is easier to compute.

\subsubsection{Intertwining equation} 
The quasi-K-matrix constructed in Sec.~\ref{s:quasiK} is related to the 
original one via Lusztig's bar involution. This simple, and yet subtle, difference allows us to straightforwardly derive the intertwining equation of the standard K-matrix from those of its factors.
Note that this is in contrast with the proof of the intertwining equation in \cite[Thm.~7.5]{BK19}, which does not directly use the intertwining equation of the quasi-K-matrix \cite[Prop.~6.1]{BK19}.
Moreover, if $\mfk$ is a fixed-point subalgebra, it becomes clear that at $q=1$ the standard K-matrix reduces to an element of the centralizer of $U\mfk$ in a completion of $U\mfg$.

%%%%%%%%%%%%%%%%%%%%%%%%%%%

\subsection{Outline}
In Section \ref{s:alm-cyl}, we introduce the notions of twist pair and cylindrical bialgebra
 (Definition \ref{def:annular-bialg}). 
This more general framework is first described in purely algebraic terms. 
We then rely on the usual Tannakian formalism to extend it to topological bialgebras. 
In Section \ref{s:km}, we recall several facts about symmetrizable Kac-Moody algebras and their automorphism groups.
In particular, we recall the definition of {\em framed realizations compatible with a diagram automorphism} and we prove that such realizations do not always exist, providing a necessary and sufficient condition in the corank one case (Proposition \ref{prop:taucompatible}).
In Section \ref{s:QG}, we review the basic theory of Drinfeld-Jimbo quantum groups, their category $\mc O$ representations, and the universal R-matrix. 
In particular, we describe a factorization of the quasi-R-matrix with respect to a subdiagram of arbitrary type (Proposition \ref{prop:quasiR:quotient}). 
In Section \ref{s:qW-int}, we recall the definition of the quantum Weyl group operators on integrable representations and their basic properties. 
In Section \ref{s:coideal}, we consider classical and quantum \emph{pseudo}-fixed-point subalgebras, combinatorially described in terms of \emph{generalized} Satake diagrams (Definition \ref{Uqk:def}).
The corresponding quantum \emph{pseudo}-involutions are defined in terms of \emph{modified diagrammatic half-balances} (cf. Section \ref{ss:mod-lusz-op}).
In Section \ref{s:quasiK}, we revisit and generalize the construction of the quasi-K-matrix 
(Theorem \ref{thm:sum-quasi-k}). 
In Section \ref{s:universalk} we modify the quasi-K-matrix with the multiplicative difference of two modified diagrammatic half-balances corresponding to a pair of generalized Satake diagrams. 
This leads to a family of solutions of the generalized reflection equation, inducing on $U_q\mfg$ a cylindrical structure with respect to which $U_q\mfk$ is a cylindrically invariant coideal subalgebra (Theorems \ref{thm:kX} and \ref{thm:kYeta:int-Delta}).  
In Section \ref{s:affine}, we briefly discuss the application of our constructions to the case of quantum symmetric pairs for the quantum loop algebra $U_qL\mathfrak{sl}_2$, showing that universal K-matrices constructed in Section \ref{s:universalk} give rise to formal solutions of a generalized reflection equation with a spectral parameter.

%%%%%%%%%%%%%%%%%%%%%%%%

\subsection{Acknowledgements}
The authors would like to thank Martina Balagovi\'{c}, Ivan Cherednik, Anastasia Doikou, Pavel Etingof, Sachin Gautam, David Jordan, Stefan Kolb, Vidas Regelskis, Nicolai Reshetikhin, Jasper Stokman, Valerio Toledano Laredo, Tim Weelinck, and Robert Weston for useful comments and
discussions. 

%%%%%%%%%%%%%%%%%%%%%%%%%%%%%%%%%%%%%%%%%%%%%%%%%%%%%%%%%%%%%%%%%

\section{Cylindrical bialgebras} \label{s:alm-cyl}

%%%%%%%%%%%%%%%%%%%%%%%%%%%

In this section, we introduce the notion of a \emph{cylindrical} bialgebra, 
which is roughly a quasitriangular bialgebra $H$ together with an action of the cylindrical braid group on its representations. The main ingredient is a distinguished solution of a generalized 
reflection equation which depends upon the choice of an algebra automorphism $\psi: H \to H$ (a twisting operator) whose defect in being a morphism from $H$ to $H^\cop$ is controlled by a
Drinfeld twist. As a special case, we recover the notion of balanced and half-balanced bialgebras \cite{KT09,ST09}, and that of cylinder-braided bialgebras as they appeared in \cite{tD98, tDHO98, BK19} (see also \cite{DKM03, E04, Bro12}). 
This more general framework shall be used in Section \ref{s:universalk} to describe the representations of the cylindrical braid group arising from quantum Kac-Moody algebras. 

%%%%%%%%%%%%%%%%%%%%%%%%%%%

\subsection{Quasitriangular bialgebras}\label{ss:qt-hopf}

Recall that by \cite{Dr90a} a \emph{quasitriangular bialgebra} is a pair $(H,R)$ 
where $H$ is a bialgebra (over a base field $\F$) and $R$ is an element of $(H\ot H)^\times$, 
called \emph{universal R-matrix}, satisfying the intertwining identity
\begin{gather}
\label{R:intw:sec2} R \; \Del(x) = \Del^\op(x) \; R \,,
\end{gather}
for any $x\in H$, and the coproduct identities
\begin{gather}
\label{R:Del} (\Del \ot \id)(R) = R_{13} R_{23} \qq\mbox{and} \qq (\id \ot \Del)(R) = R_{13} R_{12}\,, 
\end{gather}
where $\Del$ denotes the coproduct and $\Del^\op = (12) \circ \Del$ the opposite coproduct. 
Note that, if $(H,R)$ is a quasitriangular bialgebra and $\eps$ is the counit of $H$, 
then
\eq{ \label{R:eps}
(\varepsilon\ot\id)(R)=1=(\id\ot\varepsilon)(R).
}
Moreover, $(H,R_{21}^{-1})$ and $(H^\cop,R_{21})$  are also quasitriangular bialgebras, 
where $H^\cop$ denotes the co-opposite bialgebra, obtained from $H$ by replacing $\Del$ by $\Del^\op$ and leaving the other structure maps as they are. 
From \eqref{R:intw:sec2} and either coproduct formula in \eqref{R:Del} 
it follows that $R$ is a solution of the {\em Yang-Baxter equation}
\begin{equation}\label{eq:QYB}
R_{12}R_{13}R_{23}=R_{12} (\Delta\ot\id)(R)=(\Delta^{\op}\ot\id)(R) R_{12}=R_{23}R_{13}R_{12}
\end{equation}
and thus it induces a representation of the standard braid groups on the tensor powers of $H$. 
Namely, let $\Del^{(n)} : H \to H^{\ot n}$ for $n \in \Z_{\ge 1}$ be the {\em iterated} coproducts
defined by setting 
\eq{
\Delta^{(1)}\coloneqq \id_H \qq\mbox{and}\qq \Delta^{(n)}\coloneqq (\Delta\ot\id^{\ot(n-2)} ) \circ\Delta^{(n-1)} \qq (n>1).
} 
In particular, $\Del^{(2)} = \Del$. Note that $\Del^{(n)}$ yields a natural action of $H$ on 
$H^{\ten n}$ given by
\eq{
	x \cdot (h_1 \ot \cdots \ot h_n) = \Del^{(n)}(x) (h_1 \ot \cdots \ot h_n).
} Let $\Br{n}$ be the braid group of $n$ strands in the plane, presented on the generators $\topS{1},\dots,\topS{n-1}$ subject to the Artin relations
\begin{equation}\label{eq:braid-rel}
\topS{i}\cdot\topS{i+1}\cdot\topS{i}=\topS{i+1}\cdot\topS{i}\cdot\topS{i+1}
\aand
\topS{i}\cdot\topS{j}=\topS{j}\cdot\topS{i}
\end{equation}
for any $i=1,\dots, n-2$ and $|i-j|>1$, respectively. 
For any $n \in \Z_{\geqslant 2}$, the assignment
\begin{equation}\label{eq:braid-action-H}
\mu_{R}^n(\topS{i}) = (i\, i+1) \circ R_{i,i+1}, \qq i \in \{ 1,\ldots, n-1 \},
\end{equation}
where $R_{i,i+1}$ is shorthand for left multiplication by $R_{i,i+1}$, defines a morphism of groups $\mu_{R}^n:\Br{n}\to\Aut_H(H^{\ot n})$, \ie an action of $\Br{n}$ on $H^{\ten n}$ which commutes 
with the action of $H$.

%------------------------------------------------
%%%%%%%%%%%%%%%%%%%%%%%%%%%

\subsection{Artin-Tits groups}\label{ss:braid-gp-diag}
%------------------------------------------------
The braid group $\Br{n}$ is the Artin-Tits group corresponding to the Coxeter group $S_n$. 
It is well-known that Artin-Tits group have a combinatorial description in 
terms of {\em labelled diagrams},
where a {\it diagram} is an undirected graph $\dgr$ with no multiple edges or loops and
a {\it labelling} $\ulm$ on $\dgr$ is the assignment of an integer $m_{ij}\in\{2,3,\ldots,\infty\}$ 
to any pair $i,j$ of distinct vertices of $\dgr$ such that
\[
m_{ij}=m_{ji} \aand m_{ij}=2 \text{ if and only if } i \text{ and } j \text{ are not joined by an edge.}
\]
By \cite{BS72,Del72}, the \emph{Artin-Tits group} corresponding to a diagram $\dgr$ with labelling $\ulm$ is the group $\BDm$ with generators $\topS{i}$, where $i$ runs through the vertices of $\dgr$, and relations
\begin{flalign}\label{eq:gen-braid}
	&&	\underbrace{\topS{i}\cdot \topS{j}\cdot \topS{i}\;\cdots\;}_{m_{ij}}=
	\underbrace{\topS{j}\cdot \topS{i}\cdot \topS{j}\;\cdots\;}_{m_{ij}} &&
\end{flalign}

For $n \in \Z_{\geqslant 2}$, consider the Coxeter-Dynkin diagram $\dgr = {\sf A}_{n-1}$ with the following (standard) labelling: the vertex set is $\{ 1,2,\ldots,n-1 \}$ with $m_{ij}=3$ if $|i-j|=1$ and $m_{ij}=2$ otherwise.
The corresponding braid group $\BDm$ 
coincides with $\Br{n}$, see \eqref{eq:braid-rel}.  
The diagram $\dgr={\sf B}_n$ arises as an extension of ${\sf A}_{n-1}$ by including a vertex 0 with additional labelling datum $m_{01}=m_{10}=4$ and $m_{0i}=m_{i0}=0$ if $i>0$.
The corresponding braid group $\BDm$ is presented on the generators $\topS{0},\topS{1},\dots, \topS{n-1}$ subject to the relations \eqref{eq:braid-rel} and 
\begin{equation}\label{eq:ann-braid-rel}
	\topS{0}\cdot\topS{1}\cdot\topS{0}\cdot\topS{1}=\topS{1}\cdot\topS{0}\cdot\topS{1}\cdot\topS{0}
\end{equation}
Moreover, it contains an isomorphic copy of $\Br{\mathsf{A}_{n-1}}$ and identifies with the group $\Br{n}^{\sf cyl}$ of  \emph{cylindrical braids} (such  topological interpretations of Artin-Tits groups of finite type were given in general in \cite{Bri71}). We are interested in producing representations of 
cylindrical braid groups $\Br{n}^{\sf cyl}$ in terms of suitable bialgebras 
as in Section~\ref{ss:qt-hopf}.

%%%%%%%%%%%%%%%%%%%%%%%%%%%

%%%%%%%%%%%%%%%%%%%%%%%%%%%

\subsection{Drinfeld twists}\label{ss:twist}

A {\em Drinfeld twist} of a bialgebra $H$ is an element $J\in (H\ot H)^\times$ satisfying the normalization $(\varepsilon\ot\id)(J)=1=(\id\ot\varepsilon)(J)$ and the {\em cocycle identity}
\begin{equation}\label{eq:cocycle}
(J\ot 1) \cdot (\Delta\ot\id)(J) = (1\ot J) \cdot (\id\ot\Delta)(J) \,.
\end{equation}
Drinfeld twists allow us to modify the quasitriangular structure of $(H,R)$.
Indeed, given a Drinfeld twist $J$, one obtains a new quasitriangular bialgebra $(H_J, R_J)$ where $H_J$ is the bialgebra $H$ with $\Del$ replaced by the twisted coproduct $\Del_J$ defined by
\begin{equation*}
\Delta_J(x)=J\cdot\Delta(x)\cdot J^{-1}\qquad x\in H
\end{equation*}
and with the other structure maps unchanged; furthermore the twisted R-matrix is given by
\eq{
R_J\coloneqq J_{21}\cdot R\cdot J^{-1}.
} 
If $J'$ is a Drinfeld twist for $H$ and $J$ is a Drinfeld twist for $H_{J'}$, then $J \cdot J'$ is a Drinfeld twist for $H$ satisfying $H_{J \cdot J'} = (H_{J'})_J$ and $R_{J \cdot J'} = (R_{J'})_J$.
In general, $H$ and $H_J$ are not isomorphic bialgebras. However, they give rise to isomorphic braid group representations as $\mu_{H_J,R_J}^n=\Ad(J^{(n)})\circ\mu_{H,R}^n$.
Here $\Ad(\mathsf{X})$ denotes the conjugation by an invertible element $\mathsf{X}$ and $J^{(n)}$ is defined recursively by
\eq{
J^{(2)}\coloneqq J \qq\mbox{and}\qq J^{(n)}\coloneqq (J^{(n-1)}\ot 1) \cdot (\Delta^{(n-1)}\ot\id)(J) \qq(n>2).
}
New Drinfeld twists can be obtained by gauging (see \eg~\cite{ATL19a}).

\begin{remark}
One checks immediately that the Yang-Baxter equation for a quasitriangular bialgebra $(H,R)$ coincides with the cocycle identity for the R-matrix. 
Thus, the R-matrix $R\in H\ot H$ is a Drinfeld twist and $(H_R,R_R) = (H^{\cop},R_{21})$. 
Hence $(H_{R_{21}R},R_{R_{21}R}) = (H,R)$, that is, $R_{21}R\in H\ot H$ is an $(H,R)$-invariant Drinfeld twist.
\hfill\rmkend
\end{remark}

%%%%%%%%%%%%%%%%%%%%%%%%%%%

\subsection{Twist pairs}\label{ss:triv-gauge}
Let $(H,R)$ be a quasitriangular bialgebra and $\psi:H\to H$ an algebra automorphism.
The \emph{$\psi$-twisting} of $(H,R)$ is the quasitriangular bialgebra $(H^{\psi}, R^{\psi\psi})$ obtained from $(H,R)$ by pullback through $\psi$, \ie $H^\psi$ is the bialgebra with modified coproduct and counit:
\eq{
\Delta^{\psi} \coloneqq (\psi\ot\psi) \circ \Del \circ \psi^{-1}, \qq \eps^{\psi} \coloneqq \eps \circ \psi^{-1}
}
and the modified universal R-matrix given by $R^{\psi\psi} \coloneqq (\psi\ot\psi)(R)$. Note
that, by construction, $\psi$ is an isomorphism of quasitriangular bialgebras $(H,R) \to (H^{\psi}, R^{\psi\psi})$.

\begin{definition}\label{def:annular-bialg-twist-pair}
	Let $(H,R)$ be a quasitriangular bialgebra. 
		A {\em twist pair} $(\psi,J)$ is the datum of an algebra automorphism $\psi:H\to H$ and a Drinfeld twist $J\in H\ot H$ such 
		that $H^{\cop,\psi} = H_{J}$, \ie
		\begin{flalign}
&& \Delta^{\op,\psi}=\Ad(J) \circ \Delta\,,\qq\epsilon^{\psi}=\epsilon\, , \qq \mbox{and} \qq R_{21}^{\psi\psi}=J_{21}\cdot R\cdot J^{-1}\, . && \defnend
		\end{flalign}	
\end{definition}
Note that, in the terminology of \cite[Sec. 2.1]{Da07}, $(\psi, J)$ is a {\it twisted homomorphism of bialgebras $H^{\cop} \to H$}.

\subsection{Cylindrical bialgebras}\label{ss:alm-cyl-hopf}

We now introduce a class of bialgebras which naturally give rise to representations of cylindrical braid groups, in analogy with the case of quasitriangular bialgebras and braid groups of type $\mathsf{A}$.

\begin{definition}\label{def:annular-bialg}
Let $(H,R)$ be a quasitriangular bialgebra. 
\begin{enumerate}\itemsep0.25cm
\item 
We say that $(H,R)$ is \emph{cylindrical} if there exists a twist pair $(\psi,J)$ and an element $K\in H^\times$, called a {\em universal K-matrix}, such that the following coproduct identity holds
\begin{equation}\label{eq:k-coprod}
\Delta(K) = J^{-1} \cdot (1 \ot K) \cdot (\psi \ot \id)(R) \cdot (K \ot 1)
\end{equation}
\item 
A subalgebra $B\subseteq H$ is said to be \emph{cylindrically invariant} if
\begin{flalign}\label{eq:k-intertw}
&& K \cdot b = \psi(b) \cdot K  &&
\end{flalign}
for all $b\in B$. \hfill \defnend
\end{enumerate}
\end{definition}

We shall prove that any cylindrical bialgebra $H$ gives rise to a representation of the cylindrical braid group $\Br{n}^{\sf cyl}$ on $H^{\ot n}$. 
More precisely, the action of $\Br{n}^{\sf cyl}$ extends the action of $\Br{n}$ given by the R-matrix and it is therefore determined by the K-matrix. 
Whenever the subalgebra $B$ is a right coideal, i.e.
\eq{
\Del(B) \subseteq B \ot H,
}
it allows us to describe this action {\em internally}, that is, in terms of $B$-intertwiners.

\begin{proposition} \label{prop:twistedRE}
Let $(H,R,\psi, J,K)$ be a cylindrical bialgebra.
\begin{enumerate}\itemsep0.25cm
\item The $(\psi, J)$-twisted K-matrix $K\in H$ satisfies the {\em generalized reflection equation}
\begin{equation}\label{eq:psi-Q-re}
(K\ot 1) \cdot (R^\psi)_{21} \cdot (1\ot K) \cdot R = R^{\psi \psi}_{21} \cdot (1\ot K) \cdot R^\psi \cdot (K\ot 1)
\end{equation}
where $R^\psi \coloneqq (\psi \ot \id)(R)$.
\item Let $B\subseteq H$ be a cylindrically invariant coideal subalgebra. 
There is a canonical morphism of groups $\mu_{R,K}^n:\Br{n}^{\sf cyl}\to\Aut_B(H^{\ot n})$
given by the assignment
\begin{equation}\label{eq:braid-action-BH}
\mu_{R,K}^n(\topS{0}) = (\psi^{-1}\ot\id^{n-1}_{H}) \circ (K\ot 1^{\ot n-1}) 
\qquad
\mbox{and}
\qquad
\mu_{R,K}^n(\topS{i})=(i\, i+1)\circ R_{i,i+1}.
\end{equation}
\end{enumerate}
\end{proposition}

\begin{proof}
(i) It is enough to observe that, since $R\cdot\Delta(K)\cdot R^{-1}=\Delta^\op(K)$, one has
\begin{align*}
R\cdot J^{-1} \cdot (1 \ot K) \cdot R^\psi \cdot (K \ot 1) =
J_{21}^{-1} \cdot (K \ot 1) \cdot (R^\psi)_{21} \cdot (1 \ot K) \cdot R
\end{align*}
Then \eqref{eq:psi-Q-re} follows from $(\psi\ot\psi)(R_{21})=J_{21}\cdot R\cdot J^{-1}$.\\

(ii) We have to show that $\mu_{R,K}^n$ preserves the four--term relation \eqref{eq:ann-braid-rel}.
We may assume $n=2$. Set $\mu_{R,K} \coloneqq \mu_{R,K}^2$. Then, we have
\begin{align*}
\mu_{R,K}(\topS{0})&\circ\mu_{R,K}(\topS{1})\circ\mu_{R,K}(\topS{0})\circ\mu_{R,K}(\topS{1})
=\\
&=
(\psi\ot\psi)^{-1}\circ \big( (K\ot 1) \cdot (R^\psi)_{21} \cdot (1\ot K) \cdot R \big)  \\
&=
(\psi\ot\psi)^{-1}\circ \big( R^{\psi \psi}_{21} \cdot (1\ot K) \cdot R^\psi \cdot (K\ot 1) \big)\\
&=\mu_{R,K}(\topS{1})\circ\mu_{R,K}(\topS{0})\circ\mu_{R,K}(\topS{1})\circ\mu_{R,K}(\topS{0})
\end{align*}
where the second identity is the generalized reflection equation \eqref{eq:psi-Q-re}.
The result follows.
\end{proof}

%%%%%%%%%%%%%%%%%%%%%%%%%%%

\subsection{The trivial example}
It is important to observe that the representations of $\Br{n}^{\sf cyl}$ arising from cylindrical bialgebras are in general \emph{genuinely} cylindrical in that they cannot be recovered by the inclusion $\Br{n}^{\sf cyl}\subset\Br{n+1}$.
Indeed, $\Br{n}^{\sf cyl}$ identifies with the subgroup of braids on $n+1$ strands which fix a distinguished strand, mapping $\topS{i}\mapsto\topS{i+1}$ if $i\neq 0$ and $\topS{0}\mapsto\topS{1}^2$. 
Therefore, if $(H,R)$ is a quasitriangular bialgebra, we obtain an $H$-invariant action of $\Br{n}^{\sf cyl}$ on $H^{\ot(n+1)}$
\[
\wt{\mu}_R^{n+1}:\Br{n}^{\sf cyl}\to\Aut_{H}(H^{\ot(n+1)})
\]
which is the restriction of $\mu_R^{n+1}$ and therefore it is given by $\wt{\mu}_R^{n+1}(\topS{0})=R_{21}R$. 
Clearly, this can be further restricted to an action $\ol{\mu}_R^{n+1}$ on the subspace $H^{\ot n}=1\ot H^{\ot n}\subset H^{\ot(n+1)}$,  relying on the projection $\varepsilon\ot\id^{\ot n}: H^{\ot(n+1)}\to H^{\ot n}$ given by the counit. 
By \eqref{R:eps} the result is quite uninteresting as one gets $\ol{\mu}_R^{n+1}(\topS{0})=(\varepsilon\ot\id)(R_{21}R)=1$. 
This shows that any quasitriangular bialgebra $(H,R)$ is endowed with a {\em trivial}  cylindrical structure given by $\psi=\id_H$, $J=R$, and $K=1$. There are on the other
hand many non-trivial examples as we describe below.

%%%%%%%%%%%%%%%%%%%%%%%%%%%

\subsection{Balanced and half-balanced bialgebras}\label{ss:bal-half-bal}
By \cite{KT09, ST09}, a quasitriangular bialgebra $(H,R)$ is
\begin{enumerate}\itemsep0.25cm
	\item
	{\em balanced} if there exists an element $b\in H^\times$, called {\em balance}, such that $b \in Z(H)$ and $\Delta(b) = (b\ot b) R_{21} R$;
	\item 
	{\em half-balanced} if there exists an element $h\in H^\times$ called {\em half-balance}, such that $h^2\in Z(H)$ and $\Delta(h) = (h\ot h) R$.
\end{enumerate}
Note that, if $h$ is a half-balance, then $h^{2}$ is a balance, since
$R \Delta(h) R^{-1}=\Delta^{\op}(h)$ and $R(h\ot h) = (h\ot h) R_{21}$.
Balances and half-balances are examples of universal K-matrices.

\begin{proposition} \label{prop:balances}
\hfill
\begin{enumerate}\itemsep0.25cm
\item 
Let $(H,R)$ be a quasitriangular bialgebra with balance $b$. 
Then $H$ is cylindrical with $\psi=\id$, $J=R_{21}^{-1}$, and $K=b$. 
Moreover, $H$ is cylindrically invariant.
\item 
Let $(H,R)$ be a quasitriangular bialgebra with half-balance $h$. 
Then $H$ is cylindrical with $\psi=\Ad(h)$, $J=1\ot 1$, and $K=h$. 
Moreover, $H$ is cylindrically invariant.
\end{enumerate}
\end{proposition}

\begin{proof} \hfill
\begin{enumerate}\itemsep0.25cm
\item 
It is clear that $H^{\cop}=H_{R_{21}^{-1}}$, so that $(\id, R_{21}^{-1})$ is a twist pair.
Assuming that $K$ is central, the coproduct identity \eqref{eq:k-coprod} becomes
\[
\Delta(K)=R_{21} (1\ot K) R (K\ot 1) = (K\ot K) R_{21}  R
\]
so that $K=b$ is an admissible solution, which clearly commutes with every element in $H$.
\item Note that a half-balance $h\in H$ is a {\em gauge transformation} which {\em trivializes} the R-matrix, \ie we have $(h\ot h) \Delta(h)^{-1} = R_{21}^{-1}$ and $(h^{-1}\ot h^{-1}) \Delta(h) = R$.
Note that $\Ad(h)^2=\id$ and indeed 
\[
H^{\Ad(h)} =  H_{R_{21}^{-1}} = H^{\cop} = H_R = H^{\Ad(h)^{-1}}.
\]
In particular, $(\Ad(h), 1\ot 1)$ is a twist pair and the coproduct identity \eqref{eq:k-coprod} becomes
\[
\Delta(K) = (1\ot K) \cdot (\Ad(h)\ot \id)(R) \cdot (K\ot 1).
\]
Thus, $K=h$ is a solution. Finally, note that the intertwining equation \eqref{eq:k-intertw}
becomes trivial, since $K \cdot x  = \Ad(h)(x) \cdot h=\psi(x) \cdot K$ for any $x\in H$. \hfill \qedhere
\end{enumerate}
\end{proof}

\begin{remark}
Recall that a quasitriangular Hopf algebra is \emph{ribbon} if it admits a balance $b$ fixed by the antipode, in which case it is called a ribbon element.
The interplay between ribbon elements and the reflection equation was first observed by Donin, Kulish and Mudrov \cite{DKM03}.
The notion of half-balance is due to Kamnitzer--Tingley, Snyder--Tingley \cite{KT09, ST09} and Enriquez \cite{E10}. 
It would be interesting to see if the approach in \cite{DKM03} extends to half-balances.
\hfill\rmkend
\end{remark}

%%%%%%%%%%%%%%%%%%%%%%%%%%%

\subsection{Strongly cylindrical bialgebras}\label{ss:cylindrical-bialg}
We describe now a special case of cylindrical bialgebras, which first appeared in the work of tom Dieck and H\"aring-Oldenburg \cite{tD98,tDHO98} and later in the work of Balagovi\'{c}-Kolb \cite{BK19}, under the name \emph{bialgebras with a (twisted) cylinder twist}. 
It corresponds to setting $J= R_{21}^{-1}$ in Definition \ref{def:annular-bialg}; equally we may set $J=R$ which corresponds to the convention used in \cite{BZBJ18}.

\begin{definition}
We call a quasitriangular bialgebra $(H,R)$ {\em strongly cylindrical} if there exists a bialgebra automorphism 
$\varphi$ of $(H,R)$ and an element $K\in H^\times$ such that
\begin{flalign}\label{eq:k-coprod-BK}
&& \Delta(K)=R_{21} \cdot (1\ot K) \cdot R^\varphi \cdot (K\ot 1), && 
\end{flalign}
where $R^\varphi\coloneqq \varphi\ten\id(R)$.
\hfill\defnend
\end{definition}

The following motivates our choice of terminology in Definition \ref{def:annular-bialg}.

\begin{proposition}
Let $(H,R, \varphi, K)$ be a strongly cylindrical bialgebra. 
Then, $(\varphi, R_{21}^{-1},K)$ is a cylindrical structure on $(H,R)$, i.e. $(H,R)$ is a cylindrical bialgebra with twist pair $(\varphi,R_{21}^{-1})$ and universal K-matrix $K$.
\end{proposition}

\begin{proof}
Since $\varphi$ is a quasitriangular bialgebra automorphism, $H^{\varphi}=H$ and $R^{\varphi\varphi}=R$. 
Thus, $H^{\cop,\varphi}=H_{R_{21}^{-1}}$ and  $(\varphi, R_{21}^{-1})$ is a twist pair. 
The coproduct identity \eqref{eq:k-coprod} then reduces to \eqref{eq:k-coprod-BK}.
\end{proof}

\begin{remark}
Note that, for any quasitriangular bialgebra automorphism $\varphi$, $(\varphi, R)$ and $(\varphi, R_{21}^{-1})$ are always twist pairs and represent two standard choices. 
Our more general notion of cylindrical bialgebra aims to relax this condition on $\varphi$ by allowing less obvious twist pairs $(\psi, J)$ and new examples of universal K-matrices. 
For instance, note that, while balances define strongly cylindrical structures on quasitriangular bialgebras, half-balances in general do not.
However, they arise from the more general notion of cylindrical structure under consideration here. 
\hfill\rmkend
\end{remark}

%%%%%%%%%%%%%%%%%%%%%%%%%%%

\subsection{Tannakian formalism and completions}\label{ss:tannakian}

It is well--known that the purely algebraic setting we described above is in general too restrictive to describe interesting solutions of the Yang-Baxter and the reflection equations. 
Indeed, in the cases of our interest, we should rather consider \emph{pseudo} structures (cf.\ \cite{Dr86}) in that the defining 
operators, \eg R-matrices and K-matrices, are not algebraic but rather \emph{topological}, \ie they correspond to elements in a suitable completion $\wh{H}$ of the bialgebra $H$.
In general $\wh{H}$ is only a topological bialgebra, whose structure involves {\em completed} tensor products.\\

Our approach to describe such topological bialgebras is based on the well--known \emph{Tannakian formalism} \cite{D90}. 
We implicitly describe the completion $\wh{H}$ in terms of operators acting on a distinguished subcategory of $H$-modules and commuting with every $H$-intertwiner. 
This approach yields a canonical morphism $H\to\wh{H}$. Namely, let $H$ be an algebra, $\mcC\subseteq\Mod(H)$ a distinguished full subcategory and $F:\mcC\to\Vect$ the forgetful functor. 
Let $H^{\mcC}\coloneqq\End(F)$ be the algebra of natural transformations of $F$. 
Recall that, by definition, an element $\xi\in H^{\mcC}$ is a collection of operators $\xi_V: F(V)\to F(V)$, indexed by $V\in\mcC$, such that the diagram
\[
\begin{tikzcd}
F(V) \arrow[r,"\xi_V"]\arrow[d,"F(f)"']& F(V)\arrow[d,"F(f)"]\\
F(W) \arrow[r,"\xi_W"']& F(W)
\end{tikzcd}
\]
commutes for any $V,W\in\mcC$ and $f:V\to W$ in $\mcC$. 
The product on $H^{\mcC}$ is given by the composition of natural transformations.
There is a canonical map $\iota: H\to H^{\mcC}$ given by the assignment $u\mapsto u_V\coloneqq\pi_V(u)$. A subcategory $\mcC\subset\Mod(H)$ {\em separates points} if $\iota$ 
is injective or, equivalently,
if an element in $H$ is uniquely determined by its action on the objects in $\mcC$.
Intuitively, this condition forbids the category $\mcC$ from being \emph{too small}.
The existence of a canonical embedding $H\to H^{\mcC}$ yields a natural interpretation of $H^{\mcC}$ as a {\em completion} of $H$.

\begin{remark}
Every algebraic structure described in Section \ref{s:alm-cyl} admits a categorical counterpart (\eg {\em tensor categories with a cylinder twists} or, more generally, {\em braided module categories}, cf.\ \cite{tD98, Ko20} and references therein).
We will avoid to describe such categorical structures in details. 
Instead, we shall fix a monoidal subcategory of representations $\mcC$ and consider distinguished operators in the corresponding completion $H^{\mcC}$. 
It is worth {noting} that $H^{\mcC}$ in general is not a bialgebra, but rather a \emph{cosimplicial} algebra, see e.g. \cite[Sec.~8.8]{ATL19b}. \hfill \rmkend
\end{remark}

%%%%%%%%%%%%%%%%%%%%%%%%%%%%%%%%%%%%%%%%%%%%%%%%%%%%%%%%%%%%%%%%%

\section{Kac-Moody algebras}\label{s:km}

In this section, we recall several facts about symmetrizable Kac-Moody algebras and their group of automorphisms, following mainly \cite{Ka90, KW92}. 
Moreover, we recall the definition of {\em framed realizations compatible with a diagram automorphism} from \cite[Sec.~2.6]{Ko14}. We show that, in the case of generalized Cartan matrices 
of indefinite type and corank one, such realizations do not always exist. 

%-----------------------------------------------------
%%%%%%%%%%%%%%%%%%%%%%%%%%%

\subsection{Realizations and lattices}\label{ss:realizations}
%-----------------------------------------------------
 From now on we will work over\footnote{In fact, $\C$ may be replaced throughout by any algebraically closed field of characteristic 0} $\C$ (and, later on, also over formal extensions of $\C$).
Let $\indI$ be a finite set with a strict total order $<$, $\gcmA=(a_{ij})_{i,j\in\indI}$ a matrix with entries in $\C$ and $(\mfh,\Pi,\Pi^{\vee})$ a {\em realization} of $\gcmA$, \ie $\mfh$ is a $\C$-vector space, $\Pi\coloneqq\{\alpha_i\}_{i\in\indI}\subset\mfh^*$ and $\Pi^{\vee}\coloneqq\{h_{i}\}_{i\in\indI}\subset\mfh$ are linearly independent subsets such that $\alpha_i(h_{j})=a_{ji}$. 
It is well--known that for any $N\geqslant2|\indI|-\rk(\gcmA)=|\indI|+\cork(\gcmA)$ there exists a realization with $\dim(\mfh)=N$.  
This is said to be {\em minimal} precisely when $\dim(\mfh)=|\indI|+\cork(\gcmA)$.
Attached to any realization one has the following {\em (co)root subspaces and lattices}
\begin{equation*}
\Qlat^\vee\coloneqq\vspan{\Z}{\Pi^\vee}\subset\vspan{\C}{\Pi^\vee}\eqqcolon\mfh'
\qquad\mbox{and}\qquad
\Qlat\coloneqq\vspan{\Z}{\Pi}\subset\vspan{\C}{\Pi}\eqqcolon(\mfh^*)'
\end{equation*}
Note that these do not depend on the dimension of $\mfh$. 
Recall that the \emph{height functions} are the group homomorphisms $\Qlat,\Qlat^\vee \to \Z$ given by $x_i \mapsto 1$ for all $i \in I$ with $x_i=\al_i, h_i$, respectively.
Similarly, the  \emph{support functions} $\supp:\Qlat, \Qlat^\vee \to \Pow(\indI)$ are given by 
\eq{
\supp\bigg(\sum_{i \in\indI} m_i x_i\bigg) \coloneqq \{ i \in\indI \, | \, m_i \ne 0 \}
}
with $x_i=\al_i,h_i$, respectively.
The {\em weight lattice} is $\Plat\coloneqq\{\lambda\in\mfh^*\;|\; \lambda(\Qlat^\vee)\subseteq\Z\}\subset\mfh^*$.
Finally, we set $\mfz\coloneqq\{h\in\mfh\;|\; \alpha_i(h)=0 \text{ for all } i \in \indI\}$.
The {\em essential Cartan} is the $|\indI|$-dimensional space $\mfh/\mfz$, which naturally identifies with the dual of the root lattice $\Qlat^*$ through the projection 
\eq{ \label{dualQ:projection}
\mfh\simeq (\mfh^*)^* \to\Qlat^*.
} 
Henceforth, we fix a minimal realization $(\mfh, \Pi,\Pi^\vee)$.

%---------------------------------------------------------
%%%%%%%%%%%%%%%%%%%%%%%%%%%

\subsection{Diagram automorphisms}\label{ss:diag-aut}
%---------------------------------------------------------

A \emph{diagram automorphism} of $\gcmA$ is a permutation $\tau:\indI\to\indI$ such that $a_{\tau(i) \, \tau(j)} = a_{ij}$ for all $i,j \in I$. 
Diagram automorphisms form a group denoted $\Aut(\gcmA)$, whose action on $\indI$ naturally extends to the subspaces $\mfh'$ and $(\mfh^*)'$ if we set $\tau(h_i)\coloneqq h_{\tau(i)}$ and $\tau(\al_i)\coloneqq\al_{\tau(i)}$ for $\tau\in\Aut(\gcmA)$ and $i \in\indI$.
By \cite[4.19]{KW92}, any diagram automorphism can be lifted to an element in $\GL(\mfh)$ as follows. 
The identification $\Qlat^*\simeq\mfh/\mfz$ given by the projection \eqref{dualQ:projection} allows us to extend the action of $\Aut(\gcmA)$ on $\Qlat$ to $\mfh/\mfz$ in such a way that $\alpha_i(\tau(h))=\alpha_{\tau(i)}(h)$ for any $i\in\indI$. 
Since the subspace $\mfh'/\mfz\subseteq\mfh/\mfz$ is preserved by any element in the finite group $\Aut(\gcmA)$, then there exists a complement $\mfh''\subseteq\mfh$ such that $\mfh'\oplus\mfh''=\mfh$ and $(\mfh''+\mfz)/\mfz$ is $\Aut(\gcmA)$-stable. 
Then the action of $\tau$ on $\mfh/\mfz$ is lifted by pullback to an action on $\mfh$.

%-----------------------------------------------------
%%%%%%%%%%%%%%%%%%%%%%%%%%%

\subsection{Generalized Cartan matrices and Kac-Moody algebras}\label{ss:km-recap}
%-----------------------------------------------------

The matrix $\gcmA$ is a \emph{generalized Cartan matrix} if $a_{ii}=2$ and, for $i\neq j$, $a_{ij}\in\Z_{\leqslant 0}$, and $a_{ij}=0$ implies $a_{ji}=0$.
We say that $\gcmA$ is \emph{of finite type} if all the principal minors of $\gcmA$ are positive; \emph{of affine type} if $\det(\gcmA)=0$ and all proper principal minors of $\gcmA$ are positive; \emph{of indefinite type} if it is neither of finite nor of affine type.

Let $\wt\mfg$ be the Lie algebra generated by $\mfh$ and $\{e_i, f_i\}_{i\in\indI}$ with relations 
\eq{
[h,h']=0
\qq
[h,e_i]=\alpha_i(h)e_i
\qq
[h,f_i]=-\alpha_i(h)f_i
\qq
[e_i,f_j]=\drc{ij}h_{i} 
\qq 
}
for all $h,h' \in \mfh$ and $i,j \in I$. 
The Kac-Moody algebra corresponding to $\gcmA$ is the Lie algebra $\mfg=\wt\mfg/\mfr$, where $\mfr$ is the sum of all two--sided ideals in $\wt\mfg$ having trivial intersection with $\mfh\subset\wt\mfg$.
If $\gcmA$ is a generalized Cartan matrix, the ideal $\mfr$ contains 
$\mathsf{ad}(e_i)^{1-a_{ij}}(e_j)$ and $\mathsf{ad}(f_i)^{1-a_{ij}}(f_j)$ for any $i\neq j$.
The center of $\mfg$ coincides with the subspace $\mfz\subseteq\mfh$.
Set $\Qlat_+\coloneqq\bigoplus_{i\in\indI}\Z_{\geqslant0}{\alpha}_i\subseteq{\mfh}^*$. 
Then, $\mfg$ admits a triangular decomposition $\mfg=\n_-\oplus\mfh\oplus\n_+$,
where 
\eq{
\n_\pm\coloneqq\bigoplus_{\alpha\in\Qlat_+\setminus\{0\}}\mfg_{\pm\alpha} \qq \qq \text{and} \qq \qq \mfg_{{\alpha}}\coloneqq\{x\in\mfg\;|\;[h,x]=\alpha(h)x,\;\forall h\in{\mfh}\}.
}
Then, ${\rootsys}_+\coloneqq\{\alpha\in\Qlat_+\;|\; \mfg_{\alpha}\neq0\}$ is the set of {\em positive roots of} $\mfg$ and $\Phi\coloneqq\Phi_+\sqcup(-\Phi_+)$ is the {\em root system of} $\mfg$. 
We have $\dim(\mfg)<\infty$ (and thus $\Phi$ is finite) if and only if $\gcmA$ is of finite type.

%---------------------------------------------------------
%%%%%%%%%%%%%%%%%%%%%%%%%%%

\subsection{The derived subalgebra $\mfg'$}\label{ss:derived}
%---------------------------------------------------------

The derived subalgebra $\mfg'\subseteq\mfg$ is independent of the choice of the realization. 
As a vector space, $\mfg'=\n_-\oplus \mfh'\oplus\n_+$ and admits a presentation similar to that 
of $\mfg$. Namely, let $\wt \mfg'$ the Lie algebra generated by elements $\{e_i,f_i,h_{i}\}_{i \in \indI}$ with relations
\eq{
[h_i,h_j]=0
\qq
[h_i,e_j]=a_{ij}e_j
\qq
[h_i,f_j]=-a_{ij}f_j
\qq
[e_i,f_j]=\drc{ij}h_{i}
}
for all $i,j \in I$.
The Lie algebra $\wt \mfg'$ is graded by $\Qlat$, with $\wt \mfg'_0=\mfh'$. 
The quotient of $\wt \mfg'$ by the sum $\mfr'$ of its graded ideals with trivial intersection with $\wt \mfg'_0$ is canonically isomorphic to $\mfg'$. 
Moreover, $\mfz\subseteq\mfh'$ and the centre of $\mfg'$ is $\mfz$.

%---------------------------------------------------------------
%%%%%%%%%%%%%%%%%%%%%%%%%%%

\subsection{Symmetrizable Kac-Moody algebras}\label{sss:sym-ext-km}\label{ss:bil on g}
%---------------------------------------------------------------

Assume that the matrix $\gcmA$ is symmetrizable and choose a tuple 
$(\eps_i)_{i \in I} \in\Z_{>0}^I$ of coprime positive integers such that $\eps_i a_{ij} = \eps_j a_{ji}$ for all $i,j \in I$.
Note that generalized Cartan matrices of finite or affine type are always symmetrizable.
Let $\mfh''\subset\mfh$ be a complementary subspace to $\mfh'$. 
By \cite{Ka90}, the choice of $\mfh''$ induces a symmetric, non--degenerate bilinear form $\iip{\cdot}{\cdot}$ on $\mfh$ given by
\eq{ \label{bilinearform:g}
\iip{h_{i}}{\cdot}=\epsilon_i^{-1}\alpha_i(\cdot) \aand \iip{\mfh''}{\mfh''}=0\,.
} 
In particular, $\iip{h_{i}}{h_{j}}=a_{ji}\epsilon_i^{-1}=a_{ij}\epsilon_j^{-1}$.
Let $\nu:\mfh\to\mfh^*$ be the linear isomorphism given by $\nu(h)(h')\coloneqq\iip{h}{h'}$ for any $h,h'\in\mfh$. Note that $\nu$ restricts to an isomorphism $\mfh'\simeq(\mfh^*)'$, but it does not preserve the lattices unless $\gcmA$ is symmetric and defined over $\Z$. 
We also denote by $\iip{\cdot}{\cdot}$ the induced bilinear form on $\mfh^*$. The latter uniquely extends to an invariant symmetric bilinear form on $\wt\mfg$ such that $\iip{e_i}{f_j}=\drc{ij}\epsilon_i^{-1}$. 
The kernel of this form is precisely $\mfr$, and therefore $\iip{\cdot}{\cdot}$ descends to a nondegenerate form on $\mfg$. 
Set $\mfb^{\pm}\coloneqq\mfh\oplus\bigoplus_{\alpha\in\rootsys_+}\mfg_{\pm\alpha}\subset\mfg$. 
The bilinear form induces a canonical isomorphism of graded vector spaces $\mfb^{+}\simeq(\mfb^{-})^{\star}$, where $(\mfb^{-})^{\star}\coloneqq\mfh^*\oplus\bigoplus_{\alpha\in\rootsys_+}\mfg_{-\alpha}^*$ denotes the graded dual.
If $\gcmA$ is a symmetrizable generalized Cartan matrix, it is well-known that the ideal $\mfr$ is generated by the Serre relations and $\mfg$ is completely presented by generators and relations.

%%%%%%%%%%%%%%%%%%%%%%%%%%%

\subsection{Weyl groups}\label{ss:km-weyl}
The matrix $A=(a_{ij})_{i,j \in I}$ \emph{indecomposable} if for all $X \subseteq I$ there exists $(i,j) \in X \times I \backslash X$ such that $a_{ij} \ne 0$.
Henceforth we assume that $\gcmA$ is an indecomposable symmetrizable generalized Cartan matrix.
The Weyl group associated to the realization of $\gcmA$ is the subgroup $W\subseteq\mathsf{GL}(\mfh)$ generated by the fundamental reflections $s_i:\mfh\to\mfh$, $i\in\indI$, given by
\[
s_i(h)=h-\alpha_i(h)h_{i}
\]
with $h\in\mfh$. 
As an abstract group, 
\[
W\simeq\langle s_i\;|\; s_i^2=1, (s_is_j)^{m_{ij}}=1, i,j\in\indI, i\neq j \rangle
\]
where $m_{ij} = \pi/\Re\big(\cos^{-1}(\tfrac{1}{2}\sqrt{a_{ij}a_{ji}})\big)$, given explicitly
by the following table:
\[
\begin{array}{|c|c|c|c|c|c|}
\hline
a_{ij}a_{ji} & 0 & 1 & 2 & 3 & \geqslant 4\\
\hline
m_{ij} & 2 & 3 & 4 & 6 & \infty\\
\hline
\end{array}
\vspace{0.25cm}
\]
The Weyl group naturally acts on $\mfh^*$ through the {\em dual} fundamental reflections $s_i:\mfh^*\to\mfh^*$, which we denote by the same symbol, given by  
\[
s_i(\lambda)=\lambda-\lambda(h_i)\alpha_i
\]
with $\lambda\in\mfh^*$. 
One verifies that, for any $w\in W$, $h\in\mfh$, $\lambda\in\mfh^*$, $(w\lambda)(h)=\lambda(w^{-1}h)$. 
Moreover, the bilinear forms on $\mfh$ and $\mfh^*$ are $W$-invariant and $\nu:\mfh\to\mfh^*$ is an intertwiner.
The Weyl group action on $\mfh^*$ preserves the weight lattice and the root system $\mfh^*\supset\Qlat\supset\Phi$. 
A root $\alpha\in\Phi$ is {\em real} if $\alpha\in W(\Pi)$ (moreover, in this case, $\dim(\mfg_\al)=1$) and {\em imaginary} otherwise. 

%%%%%%%%%%%%%%%%%%%%%%%%%%%

\subsection{ Braid groups and integrable modules}\label{ss:km-braid-int} \label{ss:km-braid}

The braid group associated to the Weyl group $W$ is the Artin--Tits group
$\Br{W}$, generated by the elements $\brS{i}$, $i\in\indI$, with relations
\begin{equation*}
\underbrace{\brS{i}\cdot \brS{j}\cdot \brS{i}\;\cdots\;}_{m_{ij}}=
\underbrace{\brS{j}\cdot \brS{i}\cdot \brS{j}\;\cdots\;}_{m_{ij}}
\end{equation*}

Let $\dgr$ be the diagram associated to the matrix $\gcmA$ is the (unoriented) diagram $\dgr$ with vertices $\indI$, no loops, and an edge between $i$ and $j$ whenever $a_{ij}\neq0$. 
Then, $(\dgr,\ul{m})$ is the Coxeter--Dynkin diagram of $W$ and $\Br{W}=\Br{\dgr,\ul{m}}$ in terms of the notation introduced in Section~\ref{ss:braid-gp-diag}.\\

Recall that an {\em integrable} $\mfg$-module $M$ is an $\mfh$-diagonalizable module, \ie $M=\bigoplus_{\lambda\in\mfh^*}M_{\lambda}$ with 
\eq{
M_{\lambda}\coloneqq\{m\in M\;|\;\forall h\in\mfh,\, h\cdot m=\lambda(h)m\},
} 
such that the action of $e_i$ and $f_i$, $i\in\indI$, on $M$ is locally nilpotent. 
It is useful to observe that the latter condition is equivalent to the local finiteness of the action of the fundamental Lie subalgebras $\mfg_{\{ i \}} \coloneqq \langle e_i,f_i \rangle \cong \mfsl_2$,
\ie $\dim(\mfg_{\{ i \}} \cdot m) < \infty$ for all $m \in M$ and $i\in\indI$, cf.\ \cite[Ex. 3.16-3.19]{Ka90}.
We denote by $\mc{W}_{\int}$ the category of integrable $\mfg$-modules.\\

For any $i\in\indI$ and $x\in\mfg_{\{i\}}$, the operator $\exp(x)\coloneqq\sum_{n\geqslant 0}x^n/n!$ is well-defined on every $M\in\mc{W}_{\int}$ and can be regarded as an element of the algebra of endomorphisms of the forgetful functor $\mc{W}_{\int}\to\Vect_{\C}$. For any $i\in\indI$, set 
\[
\brSg{i}\coloneqq\exp(e_i)\exp(-f_i)\exp(e_i)=\exp(-f_i)\exp(e_i)\exp(-f_i)
\]
It is well--known that the assignment $\brS{i}\mapsto\brSg{i}|_M$ defines a representation of the braid group $\Br{W}$ on $M\in\mc{W}_{\int}$ \cite{Tits}. 
Moreover, $\brSg{i}(M_{\lambda})=M_{s_i(\lambda)}$ for any $\lambda\in\mfh^*$.

%%%%%%%%%%%%%%%%%%%%%%%%%%%

\subsection{Kac-Moody group}
Associated to $\mfg'$ there is a Kac-Moody group $G$, see e.g. \cite[1.3]{KW92}.
Roughly, this can be thought of as (a central extension of) a group generated by $\exp(\mfg_{\pm\al_i})$, $i\in\indI$. 
Moreover, $G$ naturally acts on any integrable $\mfg$-module and thus on $\mfg$ itself.
For any real root $\al \in \Phi$, one has a group embedding $\exp: \mfg_\al \to G$ and a group homomorphism $\Ad: G \to \Aut(\mfg)$ such that, for any real root $\al \in \Phi$ and $x \in \mfg_\al$, $\Ad(\exp(x)) = \exp(\ad(x))$. 
In the following, we shall consider the subgroup $\Ad(G)<\Aut(\mfg)$.
Finally, note that the triple exponentials $\brSg{i}$, $i\in I$, are elements of $G$ and determine a morphism of groups $\Br{W}\to G$ such that $\Ad(\brSg{i})(\mfg_\al) = \mfg_{s_i(\al)}$ and $\Ad(\brSg{i})|_\mfh =s_i$ for all $i\in\indI$ and $\alpha\in\Phi$. In particular, we obtain an the action of $\Br{W}$ on integrable $\mfg$-modules

%%%%%%%%%%%%%%%%%%%%%%%%%%%

\subsection{Automorphisms of $\mfg$}

We briefly recall the structure of the group $\Aut(\mfg)$ as given by Kac-Wang in \cite{KW92}.
Let $\wt H = \Hom_\grp(\Qlat,\C^\times)$.
There is a group homomorphism $\Ad: \wt H \to \Aut(\mfg)$ given by $\Ad(\chi)(e_i) = \chi(\al_i)e_i$, $\Ad(\chi)(f_i) = \chi(-\al_i)f_i$ and $\Ad(\chi)(h)=h$ for $i \in I$, $h \in \mfh$ and $\chi \in \wt H$.
Following \cite[1.10 and 4.23]{KW92}, we may consider the normal subgroup $\Ad(\wt H \ltimes G) < \Aut(\mfg)$.
We also denote by $\Aut(\mfg;\mfg')$ the subgroup of $\Aut(\mfg)$ of all automorphisms which fix $\mfg'$ pointwise, see \cite[4.20]{KW92}.
The action of $\Aut(A)$ on $\mfh$ can be further extended to a Lie algebra automorphism of $\mfg$ by the assignments $\tau(e_i) = e_{\tau(i)}$ and $\tau(f_i) = f_{\tau(i)}$ for all $i \in I$.
We denote by $\om \in \Aut(\mfg)$ the Chevalley involution defined by 
\eq{
\om(e_i) = -f_i, \qq \om(f_i) = -e_i, \qq \om(h)=-h 
} 
for $i\in I$ and $h\in\mfh$.
We denote $\Out(A) = \Aut(A)$ if $\gcmA$ is of finite type and $\Out(A) = \{ \id, \om \} \times \Aut(A)$ otherwise.
By \cite[4.23]{KW92} we have the decomposition
\eq{ \label{autg:structure}
	\Aut(\mfg) = \Out(A) \ltimes (\Aut(\mfg;\mfg') \times \Ad(\wt H \ltimes G)).
}

%%%%%%%%%%%%%%%%%%%%%%%%%%%

\subsection{Automorphisms of the first and second kind}

Let $\theta$ be an automorphism of $\mfg$.
Following \cite[4.6]{KW92}, we say that $\theta$ is \emph{of the first kind} if there exists $g \in G$ such that $\theta(\mfb^+) = \Ad(g)(\mfb^+)$ or, equivalently, $\theta(\mfb^+) \cap \mfb^-$ is finite-dimensional.
We say that $\theta$ is \emph{of the second kind} if there exists $g \in G$ such that $\theta(\mfb^+) = \Ad(g)(\mfb^-)$ or, equivalently, $\theta(\mfb^+) \cap \mfb^+$ is finite-dimensional.
The set of all automorphisms of the first kind $\Aut_I(\mfg)$ is a subgroup of $\Aut(\mfg)$ and the set $\Aut_{II}(\mfg)$ of all automorphisms of the second kind is the corresponding coset $\om \Aut_I(\mfg)$. 
If $\mfg$ is of finite type then $\Aut_I(\mfg) = \Aut_{II}(\mfg) = \Aut(\mfg)$ and otherwise $\Aut(\mfg)$ is the disjoint union of $\Aut_I(\mfg)$ and $\Aut_{II}(\mfg)$.
In \cite[4.38-4.39]{KW92} a combinatorial factorization is given for semisimple automorphisms of $\mfg$ of the second kind; in addition to a diagram automorphism, this requires as input a subdiagram of finite type of $I$.
We will come back to this in Section \ref{sec:pseudoinvolutions} but for now review some basic concepts associated to such subdiagrams.

%%%%%%%%%%%%%%%%%%%%%%%%%%%

\subsection{Subdiagrams of finite type} \label{sec:subdiagrams}

If $\gcmA$ is a symmetrizable generalized Cartan matrix and $X \subseteq I$ then the principal submatrix $\gcmA_X \coloneqq (a_{ij})_{i,j \in X}$ is also a symmetrizable generalized Cartan matrix.
Throughout this section we let $X$ be a subdiagram of finite type, i.e. a subset $X \subseteq I$ such that $\gcmA_X$ of finite type.
The subalgebra $\mfg_X = \langle \{ e_i,f_i \}_{i \in X} \rangle$ of $\mfg$ is a finite-dimensional semisimple Lie algebra.
In particular, $\mfh_X\coloneqq \mfh \cap \mfg_X \subseteq \mfh'$ is the $\C$-span of $\Pi^\vee_X = \{ h_i \, | \, i \in X\}$ and $\mfn^\pm_X \coloneqq \mfn^\pm \cap \mfg_X $
are the Lie subalgebras generated by $\{ e_i \, | \, i \in X \}$ and $\{ f_i \, | \, i \in X \}$, respectively.
We set $\Pi_X \coloneqq \{ \al_i \, | \, i \in X \}$, $\mfh^*_X \coloneqq \Sp_\C(\Pi_X)$, and
\eq{
	\Qlat_X = \Qlat \cap \mfh^*_X, \qq \Qlat^+_X = \Qlat^+ \cap \mfh^*_X, \qq \Phi_X = \Phi \cap \mfh^*_X, \qq \Phi^+_X = \Phi^+ \cap \mfh^*_X.
}
Similarly, we have the coroot system $\Phi^\vee_X \subset \mfh_X$ associated to the Cartan matrix $A_X^\t$ and the positive subsystem $\Phi^{\vee,+}_X$.
The root systems $\Phi_X$ and $\Phi^\vee_X$ are finite and the sum of the corresponding fundamental weights and coweights are given by 
\eq{
	\rho_X = \tfrac{1}{2} \sum_{\al \in \Phi^+_X} \al, \qq \qq \rho^\vee_X = \tfrac{1}{2} \sum_{h \in \Phi^{\vee,+}_X} h.
}
In particular, $\rho_X(h_i)=1$ if $i \in X$.\\

We denote by $\Aut_X(A)$ the subgroup of all diagram automorphisms $\tau$ such that $X$ is $\tau$-stable: $\tau(X)=X$.
Note that restriction to $X$ induces a group homomorphism $\Aut_X(A) \to \Aut(A_X)$ for all $X \subseteq I$ which is in general neither injective or surjective.
The Weyl group $W_X$ is the subgroup of $W$ generated by $\{ s_i \}_{i \in X}$.
The group $W_X$ is finite and has a unique longest element $w_X$ which is hence involutive.
There exists a (necessarily unique and involutive) $\oi_X \in \Aut(A_X)$, called the \emph{opposition involution} of $X$, such that $w_X(\al_i) = -\al_{\oi_X(i)}$ for all $i \in X$.
It is well-known that the element $\wt w_X \coloneqq \wt s_{i_1} \cdots \wt s_{i_\ell} \in G$, where 
$s_{i_1} \cdots s_{i_\ell}$ is a reduced expression of the longest element $w_X\in W_X$, 
does not depend on the choice of the reduced expression.
By \cite[Lem.~4.9 and Corollary 4.10.3]{BBBR95}, the corresponding Lie algebra automorphism of $\mfg$ satisfies
\begin{align}
\label{AdmX:X} 		&& \Ad(\wt w_X)|_{\mfg_X} &= \oi_X \circ \om|_{\mfg_X}, \\
\label{AdmX:square}	&& \Ad(\wt w_X^2)(x) &= (-1)^{2 \la(\rho^\vee_X)} x && 
\end{align}
for all $x \in \mfg_\la$ and $\la \in \Phi$.

%%%%%%%%%%%%%%%%%%%%%%%%%%%

\subsection{Dynkin diagrams}

Recall that a generalized Cartan matrix $\gcmA$ can be fully represented by 
its {\em Dynkin diagram}, a partially oriented multi--edge diagram defined as follows
\footnote{Note that this is different from the Coxeter diagram mentioned in Section \ref{ss:braid-gp-diag},
	which does not allow multi--edges, but rather \emph{labelled} edges.}. 
For simplicity, we assume that $\gcmA$ is of finite or affine type so that $a_{ij}a_{ji} \in \{ 0,1,2,3,4\}$ for all $i,j \in\indI$. 
Given two nodes $i\neq j$, there is no edge if $a_{ij}a_{ji}=0$; there is a single or double undirected edge if $\eps_i=\eps_j$ and $a_{ij}a_{ji}$ equals 1 or 4, respectively; there is a double, triple, or quadruple edge directed from $i$ to $j$ if $\eps_i>\eps_j$ and $a_{ij}a_{ji}$ equals $2$, $3$ or $4$, respectively.

%%%%%%%%%%%%%%%%%%%%%%%%%%%

\subsection{Framed realizations}
Motivated by the theory of quantum Kac-Moody algebras (cf.\ Section \ref{s:QG}), we are interested in larger (co)weights and (co)root lattices, capturing information about the {\em full} Cartan subalgebra $\mfh$. 
To this end, given a minimal realization $(\mfh,\Pi,\Pi^\vee)$ of $\gcmA$, we extend $\Pi^\vee$ to a basis $\Pi^\vee_\ext$ of $\mfh$ by adjoining a tuple $(d_r)_{r=1}^{\cork(\gcmA)}$ such that $\al_i(d_r) \in \Z$ for all $i \in I$, $r \in \{ 1, 2, \ldots, \cork(\gcmA) \}$; we call $\Pi^\vee_\ext$ an \emph{extended basis} and the triple $(\mfh,\Pi,\Pi^\vee_\ext)$ a \emph{framed (minimal) realization} of $A$.
The $d_r$ are called \emph{scaling elements}; whenever $\cork(\gcmA)=1$ we have a single scaling element which we simply denote $d$.
Setting $\mfh'' = \Sp_\C \{ d_r \}_{1 \le r \le \cork(A)}$, from \eqref{bilinearform:g} we obtain that $(d_r,d_s)=0$ for all $1 \le r,s \le \cork(A)$.
In analogy with Section \ref{ss:realizations}, we obtain the \emph{extended coroot lattice} and \emph{extended weight lattice}, respectively given by
\eq{
{\sf Q}_{\sf ext}^\vee = \Sp_\Z(\Pi^\vee_\ext)	\qq \mbox{and}\qq \Plate = \{ \la \in \mfh^* \, | \, \la({\sf Q}_{\sf ext}^\vee) \subseteq \Z \}.
}
Let $\rho \in \Plate$ be defined by $\rho(h_i) = 1$ for all $i \in I$ and $\rho(d_r)=0$ for all $r \in \{ 1,2,\ldots,\cork(\gcmA) \}$. 
Given $X \subseteq I$ of finite type, if $i \in X$ then $(\rho-\rho_X)(h_i) = 0$ and hence $\rho-\rho_X$ is fixed by $s_i$.
Therefore we have
\eq{ \label{wX:rhominusrhoX}
	w_X(\rho-\rho_X)  = \rho-\rho_X.
}

%%%%%%%%%%%%%%%%%%%%%%%%%%%

\subsection{Framed extensions of diagram automorphisms}
Let $\tau \in \Aut(A)$. 
The construction of a framed realization and the lift of $\tau$ to $\Aut(\mfh)$ (cf.\ \ref{ss:diag-aut}) both depend upon the choice of a complementary subspace $\mfh''\subset\mfh$ such that $\mfh'\oplus\mfh''=\mfh$.
Therefore, it is not surprising that, in general, $\tau$ does not necessarily preserve $\Pi^\vee_\ext$ or ${\sf Q}_{\sf ext}^\vee$ or, by duality, $\Plate$.

\begin{remark}
If $\tau$ preserves ${\sf Q}_{\sf ext}^\vee$, it can be extended to an algebra automorphism of 
the quantum group $U_q\mfg$ (cf.\ Section \ref{s:QG}). More importantly, in order to construct
solutions of the generalized reflection equation \eqref{eq:psi-Q-re} for $U_q\mfg$ we shall need
the automorphism $\tau$ to extend to $\Plate$ and $U_q\mfg$.
\hfill \rmkend
\end{remark}

Following \cite{Ko14}, we say that a framed realization $(\mfh,\Pi, \Pie^\vee)$, the set of scaling elements, ${\sf Q}_{\sf ext}^\vee$, and $\Plate$ are {\em $\tau$-compatible} if there exists a permutation $\wh \tau$ of $\{ 1,2,\ldots,\cork(\gcmA)\}$ such that 
\eq{ \label{taucompatible:def}
	\al_{\tau(i)}(d_{\wh \tau(r)}) = \al_i(d_r) \qq \text{for all } r \in \{ 1,2,\ldots,\cork(\gcmA) \}.
}
In this case, $\tau$ extends to an automorphism of ${\sf Q}_{\sf ext}^\vee$ by setting $\tau(d_r)=d_{\wh \tau(r)}$ and the corresponding dual map on $\mfh^*$, denoted by the same symbol, preserves $\Plate$.

A framed realization $(\mfh,\Pi, \Pie^\vee)$, the set of scaling elements, ${\sf Q}_{\sf ext}^\vee$, and $\Plate$ are \emph{$\tau$-minimal} if it is $\tau$-compatible and each function $j \mapsto \al_j(d_r)$ is the characteristic function of a $\tau$-orbit (possibly depending on $r$), \ie if
\begin{flalign}
&&	\forall r \in \{ 1,2,\ldots,\cork(\gcmA) \} \; \exists \{ i,\tau(i)\} \subseteq I \text{ such that } \al_j(d_r) = \begin{cases} 1 & \text{if } j \in \{ i,\tau(i) \}, \\ 0 & \text{otherwise}. \end{cases} && 
\end{flalign}

Clearly, any scaling element in a $\tau$-compatible set is a $\Z$-linear combination 
of scaling elements in a $\tau$-minimal set.

%%%%%%%%%%%%%%%%%%%%%%%%%%%

\subsection{Existence of $\tau$-compatible realizations in corank one} \label{sec:compatibleextension}
If $\gcmA$ is invertible or $\tau = \id$, any framed realization is clearly $\tau$-compatible.
By \cite[Prop.~2.12]{Ko14}, if $\gcmA$ is a generalized Cartan matrix of affine type, a $\tau$-compatible framed realization always exists. The problem is open for $\gcmA$ of 
indefinite type with $\cork(\gcmA)\neq0$ and $\tau\neq\id$ \cite[Rmk.\ 2.13]{Ko14}.
In the following, we consider the case $\cork(\gcmA)=1$, where the problem reduces to the existence of a single scaling element $d$ such that $\tau(d)=d$.
More precisely, we provide a criterion for arbitrary matrices with integer entries $A$ such that $\cork(A)=1$. 

\begin{proposition} \label{prop:taucompatible}
	Let $\gcmA = (a_{ij})_{i,j \in I}$ be a matrix with integer entries such that $\cork(\gcmA)=1$ and $\tau \in \Aut(A)$. Then $\tau|_{\Ker(A)}=\pm\id$.
	Moreover, a $\tau$-compatible scaling element exists if and only if $\tau|_{\Ker(A)}=\id$.
\end{proposition}

\begin{proof}
We note that $\Ker(A)$ is of the form $\C (a_i)_{i \in I}$ for some rational numbers $a_i$, not all zero. By clearing denominators we may assume that the $a_i$ are coprime integers; this determines them uniquely up to an overall sign. Consider the \emph{basic imaginary root} $\del = \sum_{j \in I} a_j \al_j \in \Qlat$. Note that the natural $\C$-linear left $\Aut(A)$-action on $\C^I$, defined by $(\tau(\bm x))_i = x_{\tau^{-1}(i)}$ for all $\bm x = (x_i)_{i \in I} \in \C^I$, $i \in I$, $\tau \in \Aut(A)$, stabilizes the one-dimensional space $\Ker(A)$. Thus, there exists $\zeta \in \C^\times$ such that
$a_{\tau(j)} = \zeta a_j $ for all $j\in I$.
Since $\tau$ is of finite order, $\zeta$ must be a root of unity.
On the other hand, since both $a_j$ and $a_{\tau(j)}$ are integers for all $j \in I$, it follows that $\zeta \in \Q$. Thus, $\tau|_{\Ker(A)}=\pm\id$.\\

Suppose that $\tau|_{\Ker(A)}=\id$. We show that there exists a $\tau$-compatible scaling element, by a direct generalization of the proof given in \cite[Prop.~2.12]{Ko14}.
Assume that we have a finite-dimensional vector space $\mfh$ and a basis $\Pi^\vee_\ext = \Pi^\vee \cap \{ d \}$ of $\mfh$ where $\Pi^\vee = \{ h_i \}_{i \in I}$ and $\al_{\tau(i)}(d) = \al_i(d)$ for all $i \in I$.
Now fix $k \in I$ so that $a_k \ne 0$ (such $k$ exist since $\Ker(A)$ is one-dimensional) and define $\al_j \in \mfh^*$ for $j \in I$ by:
\[ 
\al_j(h_i) = a_{ij} \text{ for all } i \in I, \qq \al_j(d) = \begin{cases} 1 & \text{if } j \in \{ k, \tau(k) \}, \\ 0 & \text{otherwise}. \end{cases}
\]
We claim that $(\mfh,\Pi,\Pi^\vee_\ext)$ with $\Pi = \{ \al_i \}_{i \in I}$ is a $\tau$-compatible framed minimal realization of $\gcmA$.
Note that if the set $\Pi = \{ \al_i \}_{i \in I}$ is linearly independent then $(\mfh,\Pi,\Pi^\vee)$ is a minimal realization by definition.
Moreover, by setting $\tau(d) = d$ we can check that \eqref{taucompatible:def} is true, thus obtaining the $\tau$-compatibility.
Suppose therefore that $\sum_{j \in I} m_j \al_j = 0$ for some $(m_j)_{j \in I} \in \C^I$; it suffices to show that $m_j=0$ for all $j \in I$.
By applying to $h_i$ for arbitrary $i \in I$ we deduce that $(m_j)_{j \in I} \in \Ker(A)$.
It follows that $\sum_{j \in I} m_j \al_j = m \del$ for some $m \in \C$.
Since $\ze =1$ we have 
\[
0 = \sum_{j \in I} m_j \al_j(d) = m \sum_{j \in \{ k, \tau(k) \}} a_j = m |\{ k, \tau(k) \}| a_k
\]
and we deduce that $m=0$ as required.\\

Suppose that $\tau|_{\Ker(A)}=-\id$. We show that there exists no $\tau$-compatible set $\{ d \}$.
This follows from the claim that the existence of such a set implies $\Ker(\del) = \mfh$, so that $\del$ is the zero element of $\mfh^*$, contradicting the linear independence of $\Pi$.
To this end, note that the definition of $\del$ directly implies that $\del(h_i)=0$ for all $i \in I$. 
It remains to show that $\del(d)=0$.
We denote the $\tau$-orbits in $I$ by $I_1,I_2,\ldots,I_\ell$ for some $\ell \in \Z_{>0}$.
For each $r \in \{1,2,\ldots,\ell \}$ choose a representative $i_r \in I_r$; then $\al_i(d)= \al_{i_r}(d)$ for all $i \in I_r$ as a consequence of $\tau$-compatibility and $I_r = \{ \tau^e(i_r) \, | \, 0 \le e < |I_r| \}$.
Furthermore, for each such $r$ we have $\al_{i_r} = \tau^{|I_r|}(\al_{i_r}) = (-1)^{|I_r|} \al_{i_r}$, so that $|I_r|$ is even and hence $\sum_{e = 0}^{|I_r|-1} (-1)^e = 0$.
Finally, we conclude that
\begin{flalign*}
&& \del(d) = \sum_{j \in I} a_j \al_j(d) = \sum_{r=1}^\ell \sum_{j \in I_r} a_j \al_j(d) = \sum_{r=1}^\ell \sum_{e = 0}^{|I_r|-1} (-1)^e a_{i_r} \al_{i_r}(d) = 0.&& 
\end{flalign*}
The result follows.
\end{proof}

If $\gcmA$ is a generalized Cartan matrix of affine type, the $a_i$ can be chosen to be positive integers. Therefore, we automatically get $\tau|_{\Ker(A)}=\id$ and recover \cite[Prop.~2.12]{Ko14}.

\begin{example}
	There are examples of non--invertible indecomposable symmetrizable generalized Cartan matrices $\gcmA$ of indefinite type with nontrivial diagram automorphisms, both with and without a $\tau$-compatible weight lattice.
	For instance, in the case of the corank one generalized Cartan matrices 
	\[
	A_1 = \begin{pmatrix} 
	2 & -1 & 0 & -3 \\
	-1 & 2 & -3 & 0 \\
	0 & -3 & 2 & -1 \\
	-3 & 0 & -1 & 2
	\end{pmatrix}
	\qq\mbox{and}\qq
	A_2 = \begin{pmatrix} 
	2 & -1 & 0 & -1 \\
	-9 & 2 & -1 & 0 \\
	0 & -1 & 2 & -9 \\
	-1 & 0 & -1 & 2
	\end{pmatrix}.
	\]
	we have that $A_1$ has a $\tau$-compatible weight lattice whereas $A_2$ does not.\hfill \examend
\end{example}

%%%%%%%%%%%%%%%%%%%%%%%%%%%%%%%%%%%%%%%%%%%%%%%%%%%%%%%%%%%%%%%%%

\section{Drinfeld-Jimbo quantum groups}\label{s:QG}

In this section we review the basic theory of Drinfeld-Jimbo quantum groups \cite{Dr85,Dr86,Dr90a,Ji86,Lus94}.
In particular, we discuss the factorization properties of the universal R-matrix and we define
(highest and lowest weight) category $\mc O$ representations.

%%%%%%%%%%%%%%%%%%%%%%%%%%%

\subsection{Quantum Kac-Moody algebras}
Let $q$ be an indeterminate and denote by $\F$ the algebraic closure of $\C(q)$. We shall use
the fact that the multiplicative group $\F^\times$ is a divisible abelian group\footnote{
	While it is possible to use only  a finite extension of $\C(q)$ (cf. \cite[Rmk.\ 2.3]{BK19}),
	the latter depends on the generalized Cartan matrix $A$. Therefore, we prefer to  
	 work with the algebraic closure of $\C(q)$.}.
Let $\gcmA$ be a generalized Cartan matrix and $(\mfh, \Pi,\Pie^\vee)$ a framed realization.
Following \cite{Dr85,Dr86,Ji86,Lus94} we denote by $U_q\mfg$ the unital associative $\F$-algebra with generators $E_i, F_i$ ($i \in \indI$) and $t_h$ ($h \in \Qlat^\vee_\ext$) subject to the following relations for $h,h' \in \Qlat^\vee_\ext$, $i,j \in \indI$:
\begin{gather}
\nonumber t_0 = 1, \qq t_h t_{h'} = t_{h+h'}, \\
\nonumber t_h E_i = q^{\al_i(h)} E_i t_h, \qq t_h F_i = q^{-\al_i(h)} F_i t_h, \qq [E_i,F_j] = \del_{ij} \frac{t_i-t_i^{-1}}{q_i-q_i^{-1}}, \\
\nonumber \Ser_{ij}(E_i,E_j) = 0=\Ser_{ij}(F_i,F_j)  \qq\qq (i \ne j)
\end{gather}
where $t_i = t_{\eps_i h_i}$, $q_i\coloneqq q^{\eps_i}$ and $\Ser$ denotes the q-deformed
Serre relations (\eg \cite[3.1.1 (e)]{Lus94}).
We endow $U_q\mfg$ with the bialgebra structure
determined by the coproduct
\eq{ \label{Uqg:bialgebra}
\begin{aligned}
\Del(E_i) &= E_i \ot 1 + t_i \ot E_i, \qu & \Del(F_i) &= F_i \ot t_i^{-1} + 1 \ot F_i, \qu & \Del(t_h) &= t_h \ot t_h\, .
\end{aligned}
}

%%%%%%%%%%%%%%%%%%%%%%%%%%%

\subsection{Triangular decomposition and diagrammatic subalgebras}
We consider the standard subalgebras
\eqn{
U_q\mfn^+ = \langle E_i \,\vert\,{i \in \indI} \rangle, \qq U_q\mfn^- =\langle F_i \,\vert\,{i \in \indI} \rangle,, \qq U_q\mfh = \langle t_h \,\vert\,{h \in \Qlat^\vee_\ext} \rangle
}
so that $U_q\mfg=U_q\mfn^+U_q\mfh U_q\mfn^-$. We set $U_q\mfb^\pm = U_q\mfn^\pm U_q\mfh$
and consider the following quantum analogue of the derived subalgebra $\mfg'\subseteq\mfg$
\eq{
U_q\mfg' = \langle E_i,F_i,t_i^{\pm 1} \,\vert, {i \in \indI} \rangle
\qq\mbox{and}\qq U_q\mfh' = \langle t_i^{\pm 1} \,\vert\, {i \in \indI} \rangle.
}

For any subset $X \subseteq \indI$, the derived quantum Kac-Moody algebra corresponding to $\gcmA_X$ embeds in $U_q\mfg'$, yielding the diagrammatic subalgebras
\eqn{
U_q\mfg_X = \langle E_i,F_i,t_i^{\pm 1}\,\vert\, {i \in X} \rangle, \qq U_q\mfh_X = U_q\mfg_X \cap U_q\mfh, \qq\mbox{and}\qq U_q\mfn^\pm_X = U_q\mfg_X \cap U_q\mfn^\pm.
}
Note that, for any $i\in I$, $U_q\mfg_{\{ i \}}\simeq U_{q_i}\mfsl_2$.
The assignment $\al_i \mapsto t_i$ yields an algebra isomorphism $\F \Qlat \to U_q\mfh'$ and, 
for $\la \in \Qlat$, we set $t_\la = \prod_{i \in \indI} t_i^{\ell_i}$ if $\la = \sum_{i \in \indI} \ell_i \al_i$.
These elements satisfy
\eq{ \label{Uqg:rels:Cartan2}
t_\la E_i = q^{(\la,\al_i)} E_i t_\la, \qq t_\la F_i = q^{-(\la,\al_i)} F_i t_\la \qq \text{for all } \la \in \Qlat, i \in \indI.
}
In terms of the linear isomorphism $\nu$ from Section \ref{sss:sym-ext-km} we have $t_\la = t_{\nu^{-1}(\la)}$ for all $\la \in \Qlat$.

%%%%%%%%%%%%%%%%%%%%%%%%%%%

\subsection{Root space decomposition}

Let $M$ be a $U_q\mfh$-module. 
For any $\mu \in \Plate$, we set
\eqn{
M_\mu = \{ m \in M \, | \, \forall h \in \Qlat^\vee_\ext : t_h \cdot m = q^{\mu(h)} m \}.
}
Note that, for all $\la \in \Qlat$ and $\mu \in \Plate$, $t_\la$ acts on $M_\mu$ as multiplication by $q^{(\la,\mu)}$.
By considering the action of $U_q\mfh$ on $U_q\mfg$ and $U_q\mfg'$ by conjugation, 
we obtain the root space decomposition
\eq{ \label{Uqg:rootspacedecomposition}
	U_q\mfg = \bigoplus_{\la \in \Qlat} (U_q\mfg)_\la, \qq U_q\mfg' = \bigoplus_{\la \in \Qlat} (U_q\mfg')_\la
}
as $\Qlat$-graded algebras. Note that $U_q\mfn^\pm$ are graded by $\pm \Qlat^+$, \ie
$U_q\mfn^\pm = \bigoplus_{\la \in \Qlat^+} (U_q\mfn^\pm)_{\pm \la}$.

%%%%%%%%%%%%%%%%%%%%%%%%%%%

\subsection{Automorphisms} 
We briefly review several distinguished algebra automorphisms of $U_q\mfg$.
An algebra automorphism is assumed to be $\F$-linear unless otherwise stated.
Any diagram automorphism $\tau \in \Aut(A)$ (cf. ~Section~\ref{s:km}) acts as a bialgebra automorphism on $U_q\mfg'$ by
\eqn{
\tau(E_i) = E_{\tau(i)}\,, \qq \tau(F_i) = F_{\tau(i)}\,, \qq \tau(t_i) = t_{\tau(i)}\,, 
}
for any $i\in I$.
If the framed realization is $\tau$-compatible, then this action extends automatically to a bialgebra 
automorphism of $U_q\mfg$ by setting $\tau(t_h) = t_{\tau(h)}$ for all $h \in \Qlat^\vee_\ext$.
The Chevalley involution $\om$ lifts to an involutive algebra automorphism of $U_q\mfg$, denoted by the same symbol, determined by
\eqn{
\om(E_i) = -F_i\,, \qu \om(F_i) = -E_i\,, \qu \om(t_h) = t_{-h}\,,
}
for any $i\in\indI$ and $h\in\Qlat^\vee_\ext$.
Note that $\om$ is a bialgebra isomorphism from $U_q\mfg$ to $U_q\mfg^\cop$,\ie
\eq{ \label{om:bialgebra}
\Del \circ \om = (\om \ot \om) \circ \Del^\op, \qq \eps \circ \om = \eps.
}
Finally, we discuss the \emph{bar involution}, which is not $\mathbb{F}$-linear (cf.~ \cite{Lus94}).
Note that the algebraic closure of the field of formal Laurent series $\C((q))$ is given by $\wb{\C((q))} = \bigcup_{n \ge 1} \C((q^{1/n}))$, on which we define a field automorphism $\wb{\phantom{u}}$ by the rule $\wb{q^{1/n}} = q^{-1/n}$.
The field $\mathbb{F}$ arises as the set of algebraic elements in $\wb{\C((q))} $ over $\C(q)$ and note that $\wb{\phantom{u}}$ stabilizes $\C(q) \subset \wb{\C((q))}$. 
By considering minimal polynomials of elements of $\mathbb{F}$ in $\C(q)[x]$ we obtain that $\wb{\phantom{u}}$ stabilizes $\mathbb{F}$.
We extend it to an algebra automorphism of $U_q\mfg$ by setting
\eqn{
\wb{E_i} = E_i\,, \qu \wb{F_i} = F_i\,, \qu \wb{t_h} = t_{-h}\,,
}
for any $i\in\indI$ and $h\in\Qlat^\vee_\ext$.
We use the same notation to denote the corresponding algebra automorphism of $U_q\mfg \ot U_q\mfg$, defined by $\wb{u \ot u'} \coloneqq \wb{u} \ot \wb{u'}$ for any $u,u' \in U_q\mfg$.

%%%%%%%%%%%%%%%%%%%%%%%%%%%

\subsection{The Drinfeld-Lusztig pairing}

Given a bilinear pairing $\langle \phantom{x}, \phantom{x} \rangle: A^- \times A^+ \to \F$ between algebras $A^-$ and $A^+$ over a field $\F$, it extends to an $\F$-valued bilinear pairing between $(A^-)^{\ot n}$ and $(A^+)^{\ot n}$ for all $n \in \Z_{\geqslant 1}$ by
\[
\langle a^-_1 \ot \cdots \ot a^-_n, a^+_1 \ot \cdots \ot a^+_n \rangle = \prod_{m=1}^n \langle a^-_m , a^+_m \rangle
\]
for all $a^-_1,\ldots,a^-_n \in A^-$ and $a^+_1,\ldots,a^+_n \in A^+$.
We recall that, by \cite{Dr90a, Lus94}, there exists a unique $\F$-bilinear pairing $\langle \phantom{x}, \phantom{x} \rangle: U_q\mfb^- \times U_q\mfb^+ \to \F$ such that
\eq{ \label{pairing:rel1}
\langle y, xx' \rangle = \langle \Del(y),x' \ot x \rangle, \qq \langle yy',x \rangle = \langle y \ot y' , \Del(x) \rangle
}
for any $x,x' \in U_q\mfb^+$, $y,y' \in U_q\mfb^-$, and
\eq{ \label{pairing:rel2}
\begin{aligned}
\langle t_h,t_{h'} \rangle &= q^{-(h,h')}, & \langle F_i,E_j \rangle &= \del_{ij} \frac{1}{q_i^{-1}-q_i}, \\
\langle t_h,E_j \rangle &= 0, & \langle F_i,t_{h'} \rangle &= 0.
\end{aligned}
}
for any $i,j \in \indI$, $h,h' \in \Qlat^\vee_\ext$.
In particular, for any $e \in U_q\mfn^+$, $f \in U_q\mfn^-$, $h,h' \in \Qlat^\vee_{\ext}$, one obtains
\eq{\label{pairing:rel3}
\langle f t_h, e t_{h'} \rangle = q^{-(h,{h'})} \langle f,e \rangle
}
so that $\langle f, x t_h \rangle = \langle f, x \rangle$, 
for all $x \in U_q\mfb^+$, $f \in U_q\mfn^-$ and $h \in \Qlat^\vee_{\ext}$.

%%%%%%%%%%%%%%%%%%%%%%%%%%%

\subsection{The categories $\mcO^\eps$}\label{ss:quantum-cat-O}

It is well--known that the Drinfeld-Lusztig pairing allows for the realization of $U_q\mfg$ as (a quotient of) a quantum double \cite{Dr86}. 
Thus, the canonical element of $\langle\cdot,\cdot\rangle$, which belongs to a suitable completion of the tensor product $U_q\mfb^-\ot U_q\mfb^+$, is a {\em topological} R-matrix inducing a quasitriangular structure on $U_q\mfg$. 
This is more conveniently described in terms of categories of representations as we explained in Section \ref{ss:tannakian}.\\

Let $\mc W\subset\Mod(U_q\mfg)$ be the full subcategory of
$U_q\mfg$-modules $M$ endowed with a weight space decomposition, \ie
\eqn{
	M = \bigoplus_{\mu \in\Plate} M_\mu\,.
} 
Then, for any $\la \in \Qlat$, the action of $(U_q\mfg)_\la$ maps $M_\mu$ into $M_{\mu+\la}$.
For $\eps\in\{\pm\}$, let $\mc O^\eps$ denote the full subcategory consisting of objects in $\mc W$ with a locally finite $U_q\mfn^\eps$-action and finite-dimensional weight spaces
(see \eg \cite[Ch.~9]{Ka90} or \cite[Ch.~3]{Lus94}). Note that $\mc O^\eps$ is closed under 
tensor products.\\ 

We denote by $(U_q\mfg)^{\mc O^\eps}$ the completion of $U_q\mfg$ with respect to the category $\mc O^\eps$ (cf.~Section~\ref{ss:tannakian}).
Recall that, by construction, $(U_q\mfg)^{\mc O^\eps}$ is the algebra of operators defined on category $\mc O^\eps$ modules, which are natural with respect to $U_q\mfg$-intertwiners.
Similarly, we denote by $(U_q\mfg^{\ot n})^{\mc O^\eps}$ the algebra of operators defined on tensor products of $n$ modules in $\mc O^{\eps}$. 
For any $n \in \Z_{>0}$, $U_q\mfg^{\ot n}$ embeds in $(U_q\mfg^{\ot n})^{\mc O^\eps}$ and therefore $\mc O^\eps$ separates points \cite[Question~8.2]{Dr90b}.\\

Note that $(U_q\mfg)^{\mc O^\eps}$ contains as a subalgebra the completion $(U_q\mfn^\eps)^{\mc O^\eps}\coloneqq\prod_{\mu \in \eps\Qlat^+} (U_q\mfn^\eps)_\mu$ of $U_q\mfn^\eps$ with respect to its natural $\eps\Qlat^+$-grading. 
Indeed, every element in $(U_q\mfn^\eps)^{\mc O^\eps}$ is convergent on category $\mc O^{\eps}$ and commutes with every intertwiner. 
Similarly, we have that
\[
(U_q\mfn^\mp\ot U_q\mfn^\pm)^{\mc O^\eps}\coloneqq
\prod_{\mu\in\pm\Qlat^+}(U_q\mfn^\mp)_{-\mu} \ot (U_q\mfn^\pm)_\mu
\]
are subalgebras in $(U_q\mfg^{\ot 2})^{\mc O^\eps}$. 
Finally, note that $(U_q\mfn^\pm)_0 = \F$.

%%%%%%%%%%%%%%%%%%%%%%%%%%%

\subsection{Quasi-R-matrices}\label{sec:R-matrices}
We recall below the construction of the so-called quasi-R-matrix due to Lusztig \cite{Lus94}.
For any $\mu \in \Qlat^+$, let $( b^-_{\mu,r} )_r$ be an ordered basis for $(U_q\mfn^-)_{-\mu}$ and let $(b^+_{\mu,r})_r$ be the corresponding dual basis for $(U_q\mfn^+)_\mu$ with respect to the pairing $\langle \phantom{x},\phantom{x} \rangle$ defined by \eqref{pairing:rel1}-\eqref{pairing:rel2}.
Lusztig's \emph{quasi-R-matrix} is the element
\eq{ \label{quasiR:def}
	\begin{aligned}
		\Theta = \sum_{\mu \in \Qlat^+} \Theta_\mu \in (U_q\mfn^-\ot U_q\mfn^+)^{\mc O^\eps} 
		\quad\text{where} \quad \Theta_\mu = \sum_r b^-_{\mu,r} \ot b^+_{\mu,r} \in (U_q\mfn^-)_{-\mu} \ot (U_q\mfn^+)_\mu.
	\end{aligned}
}
The element $ \Theta_\mu$ is independent of the choice of basis.
The quasi-R-matrix is deeply related with the bar involution \cite[Thm.~4.1.2]{Lus94}.
Namely, $\Theta$ is the unique element of the form $\Theta = \sum_{\mu \in \Qlat^+} \Theta_\mu$ with $ \Theta_\mu \in (U_q\mfn^-)_{-\mu} \ot (U_q\mfn^+)_\mu$ such that 
$\Theta_0 = 1 \ot 1$ and 
\eq{ \label{quasiR:intw}
	\Theta \wb{\Del(u)} = \Del(\wb{u}) \Theta
}
for any $u\in U_q\mfg$.
Moreover, by a standard argument, the normalization $\Theta_0=1$ guarantees that 
$\Theta$ is invertible and moreover, by uniqueness, one has
\eq{ \label{quasiR:inverse}
	\Theta^{-1} = \wb{\Theta}.
}

It is clear that $\Theta$ is not the \emph{full} R-matrix of $U_q\mfg$. 
As $\Theta$ is the canonical element of the pairing between $U_q\mfn^+$ and $U_q\mfn^-$,
the missing factor is a weight zero operator, computed in terms of Cartan elements. 

%%%%%%%%%%%%%%%%%%%%%%%%%%%

\subsection{A completion of the quantum Cartan subalgebra} \label{sec:Uqhhat}

We shall describe several weight zero operators. For convenience, we think of such elements as belonging to the completion $(U_q\mfg)^{\mc W}$. In particular, they act on
category $\mc O^\eps$ modules.
Let $\Fun(A,B)$ denote the functions from a set $A$ to a set $B$.
Any $\be \in \Fun(\Plate, \F)$ induces an element of $(U_q\mfg)^{\mc W}$, also denoted $\be$, whose action on $M \in \mc W$ is given by
\[
\be \cdot m = \be(\mu) m
\]
for any $\mu\in\Plate$ and $m\in M_{\mu}$.
The subspace of $(U_q\mfg)^{\mc W}$ spanned by such $\be$ is a commutative subalgebra, which we denote by $(U_q\mfh)^{\mc W}$. 
Indeed, note that the Cartan subalgebra $U_q\mfh$ naturally embeds in $(U_q\mfh)^{\mc W}$. 
The group $(U_q\mfh)^{\mc W,\times}$ of invertible elements is given by $\Fun(\Plate, \F^\times)$. 
Similarly, any $\ka \in \Fun(\Plate \times \Plate ,\F)$ defines an element of $(U_q\mfg \ot U_q\mfg)^{\mc W}$, also denoted $\ka$, whose action on $M \ot N$, for $M,N \in \mc W$ is given by
\[
\ka \cdot m \ot n = \ka(\mu,\nu) m \ot n 
\]
for any $\mu,\nu\in\Plate$, $m\in M_{\mu}$, and $n\in N_{\nu}$.
In particular, for any $g \in \End_\Z(\Plate)$, we denote by $\ka_g$ the function from 
$\Plate \times \Plate \to \F^\times$ given by
\eq{ \label{ga:def}
	\ka_g(\mu,\nu)\coloneqq q^{(g(\mu),\nu)}.
}

\begin{remark}\label{rmk:theta-extend-to-completion}
Let $\psi_q$ be an algebra endomorphism of $U_q\mfg$, whose restriction functor, which we
refer to as the {\em pullback} of $\psi_q$, gives an endofunctor $\psi_q^*:\mc W\to \mc W$. 
Then, $\psi_q$ extends to an endomorphism of $(U_q\mfg)^{\mc W}$, given by 
$\psi_q(\varphi)|_M\coloneqq\varphi|_{\psi_q^*(M)}$ for any $\varphi\in(U_q\mfg)^{\mc W}$
and $M\in\mc W$. Suppose that $\psi_q$ is invertible and $\psi_q^*$ acts by permuting the 
weight spaces, \ie there exists $\psi \in \End_\Z(\Plate)$ such that, for any $M\in\mc W$ and $\mu\in\Plate$, $\psi_q^*(M)_{\mu}=M_{\psi(\mu)}$. Then,
for any $g \in \End_\Z(\Plate)$, we have $(\psi_q\ten\id)(\ka_g)=\ka_{g\circ\psi}$.
\hfill\rmkend
\end{remark}

%%%%%%%%%%%%%%%%%%%%%%%%%%%

\subsection{A distinguished subgroup of $(U_q\mfh)^{\mc W,\times}$}\label{ss:cartan-operators}
For any $\ze \in \End_\Z(\Plate)$ and $\la \in \Plate$, we define $G_{\ze,\la} \in \Fun(\Plate,\F^\times)$ by
\eq{ \label{G:def}
	G_{\ze,\la}(\mu) \coloneqq q^{(\ze(\mu),\mu)/2 + (\la,\mu)}
}
for any $\mu\in\Plate$. For example, for $\la \in \Qlat$, we have $G_{0,\la} = t_\la$ .
Note that the set of functions  $G_{\ze,\la}$ with $\ze \in \End_\Z(\Plate)$ and $\la \in \Plate$ 
form a subgroup of $(U_q\mfh)^{\mc W,\times}$, since 
$G_{\ze_1,\la_1}G_{\ze_2,\la_2} = G_{\ze_1+\ze_2,\la_1+\la_2}$.
We restrict to the case of $\ze \in \End_\Z(\Plate)$ which are self-adjoint with respect to  $(\phantom{x},\phantom{x})$.

\begin{lemma} \label{lem:G}
	Let $\ze \in \End_\Z(\Plate)$ be self-adjoint and $\la \in \Plate$.
	\begin{enumerate}\itemsep0.25cm
		\item The functional equation
		\eq{ \label{G:funceqn}
			G_{\ze,\la}(\mu+\nu) = G_{\ze,\la}(\mu) G_{\ze,\la}(\nu) q^{(\ze(\mu),\nu)}
		}
		holds for any $\mu,\nu\in\Plate$.
		\item In $(U_q\mfg \ot U_q\mfg)^{\mc W}$, it holds
		\eq{ \label{G:Delta}
			\Del(G_{\ze,\la}) = G_{\ze,\la} \ot G_{\ze,\la} \; \ka_\ze\,,
		}
		where $\Del$ denotes the coproduct $(U_q\mfg)^{\mc W}\to (U_q\mfg \ot U_q\mfg)^{\mc W}$.
		\item If $\ze(\Qlat) \subseteq \Qlat$, the automorphism $\Ad(G_{\ze,\la})$ 
		preserves $U_q\mfg$ in $(U_q\mfg)^{\mc W}$.
		More precisely, it holds
		\eq{ \label{G:Ad}
			\Ad(G_{\ze,\la})(u) = G_{\ze,\la}(\mu) u t_{\ze(\mu)} = G_{-\ze,\la}(\mu) t_{\ze(\mu)} u\,,
		}
		for any $\mu \in \Qlat$ and $u \in (U_q\mfg)_\mu$.
	\end{enumerate}
\end{lemma}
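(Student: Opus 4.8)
The plan is to establish \eqref{G:funceqn}, \eqref{G:Delta} and \eqref{G:Ad} in succession; all three come down to elementary manipulations with exponents of $q$, the only substantive ingredient being that $\ze$ is self-adjoint for the symmetric form $(\phantom{x},\phantom{x})$. For \eqref{G:funceqn} I would expand $(\ze(\mu+\nu),\mu+\nu)$ using bilinearity of the form and $\Z$-linearity of $\ze$ into $(\ze(\mu),\mu)+(\ze(\mu),\nu)+(\ze(\nu),\mu)+(\ze(\nu),\nu)$; self-adjointness of $\ze$ (together with the symmetry of the form) identifies the two cross-terms, so that $\tfrac12(\ze(\mu+\nu),\mu+\nu)=\tfrac12(\ze(\mu),\mu)+(\ze(\mu),\nu)+\tfrac12(\ze(\nu),\nu)$. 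Combining this with $(\la,\mu+\nu)=(\la,\mu)+(\la,\nu)$ and exponentiating, the definition \eqref{G:def} gives \eqref{G:funceqn} at once.

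For \eqref{G:Delta}, note that both sides lie in the image of $\Fun(\Plate\times\Plate,\F)$ inside $(U_q\mfg\ot U_q\mfg)^{\mc W}$, so it is enough to compare the scalars by which they act on $m\ot n$ for $m\in M_\mu$, $n\in N_\nu$, with $M,N\in\mc W$ and $\mu,\nu\in\Plate$. Since $m\ot n$ has weight $\mu+\nu$, the left-hand side acts by $G_{\ze,\la}(\mu+\nu)$, while the right-hand side acts by $G_{\ze,\la}(\mu)\,G_{\ze,\la}(\nu)\,\ka_\ze(\mu,\nu)$, and $\ka_\ze(\mu,\nu)=q^{(\ze(\mu),\nu)}$ by \eqref{ga:def}. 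These two scalars agree by \eqref{G:funceqn}, which proves \eqref{G:Delta}.

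For \eqref{G:Ad}, assume $\ze(\Qlat)\subseteq\Qlat$, so that $t_{\ze(\mu)}\in U_q\mfh'\subseteq U_q\mfg$ is defined for every $\mu\in\Qlat$. As $G_{\ze,\la}$ is invertible in $(U_q\mfh)^{\mc W,\times}$, the map $\Ad(G_{\ze,\la})\colon u\mapsto G_{\ze,\la}\,u\,G_{\ze,\la}^{-1}$ is an algebra automorphism of $(U_q\mfg)^{\mc W}$, and by linearity it suffices to evaluate it on a weight vector $u\in(U_q\mfg)_\mu$. Letting it act on $m\in M_\xi$ for $M\in\mc W$, $\xi\in\Plate$, and using that $u$ carries $M_\xi$ into $M_{\xi+\mu}$, I obtain $\Ad(G_{\ze,\la})(u)\cdot m=\big(G_{\ze,\la}(\xi+\mu)/G_{\ze,\la}(\xi)\big)\,u\cdot m$; by \eqref{G:funceqn} and self-adjointness this scalar equals $G_{\ze,\la}(\mu)\,q^{(\ze(\mu),\xi)}$, and $q^{(\ze(\mu),\xi)}$ is precisely the scalar by which $t_{\ze(\mu)}$ acts on $M_\xi$. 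Hence $\Ad(G_{\ze,\la})(u)$ and $G_{\ze,\la}(\mu)\,u\,t_{\ze(\mu)}$ act identically on every object of $\mc W$, giving the first equality of \eqref{G:Ad}; in particular $\Ad(G_{\ze,\la})$ maps weight vectors, hence all of $U_q\mfg$, back into $U_q\mfg$. The second equality then follows by pushing $t_{\ze(\mu)}$ past $u$ via $t_{\ze(\mu)}u=q^{(\ze(\mu),\mu)}u\,t_{\ze(\mu)}$, a consequence of \eqref{Uqg:rels:Cartan2}, and observing that $G_{\ze,\la}(\mu)=G_{-\ze,\la}(\mu)\,q^{(\ze(\mu),\mu)}$ directly from \eqref{G:def}.

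I do not expect a genuine obstacle here: the arguments are essentially bookkeeping with exponents of $q$. The only places demanding attention are keeping the roles of the two lattice arguments straight when invoking the asymmetric-looking functional equation \eqref{G:funceqn}, and noticing that the hypothesis $\ze(\Qlat)\subseteq\Qlat$ in part (iii) is exactly what makes $t_{\ze(\mu)}$ meaningful inside $U_q\mfg$.
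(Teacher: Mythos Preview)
Your proof is correct and follows essentially the same approach as the paper: establish the functional equation \eqref{G:funceqn} from self-adjointness, then deduce \eqref{G:Delta} and \eqref{G:Ad} by comparing the action on weight vectors and invoking \eqref{G:funceqn}. You have simply made explicit the details that the paper leaves to the reader.
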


\begin{proof} \mbox{}
	\begin{enumerate}[leftmargin=2em]\itemsep0.25cm
		\item This follows immediately from the self-adjointness of $\ze$.
		\item Let $M,N \in \mc W$ and let $m \in M$, $n \in N$ such that $m \in M_\mu$ and $n \in N_\nu$ for some $\mu,\nu \in \Plate$.
		Then $m \ot n \in (M \ot N)_{\mu + \nu}$, so that 
		\[
		\Del(G_{\ze,\la})(m \ot n) = G_{\ze,\la}(\mu+\nu) m \ot n = G_{\ze,\la}(\mu) G_{\ze,\la}(\nu) \ka_\ze(\mu,\nu) m \ot n = \big( G_{\ze,\la} \ot G_{\ze,\la} \; \ka_\ze \big) (m \ot n),
		\]
		where the second equality follows from \eqref{G:funceqn}.
		\item Let $N \in \mc W$, $\nu \in \Plate$, $\mu \in \Qlat$ and $u \in (U_q\mfg)_\mu$.
		Then we have
		\[
		\Ad(G_{\ze,\la})(u)|_{N_\nu} = \frac{G_{\ze,\la}(\mu+\nu)}{G_{\ze,\la}(\nu)} u|_{N_\nu} 
		= G_{\ze,\la}(\mu) q^{(\ze(\mu),\nu)} u|_{N_\nu} = G_{\ze,\la}(\mu) u t_{\ze(\mu)}|_{N_\nu}
		\]
		and note that $t_{\ze(\mu)} u = q^{(\ze(\mu),\mu)} u t_{\ze(\mu)}$.
		It follows that $\Ad(G_{\ze,\la})(u)$ preserves $U_q\mfg$. \qedhere
	\end{enumerate}
\end{proof}

We shall consider also the subgroup of group homomorphisms $\Hom_\grp(\Plate,\F^\times)$, 
\ie $\be:\Plate\to\F^\times$ such that $\be(\mu+\nu) = \be(\mu)\be(\nu)$ for 
$\mu,\nu \in \Plate$. For any $\be \in \Hom_\grp(\Plate,\F^\times)$, we have 
$\Del(\be) = \be \ot \be$ and $\Ad(\be)(u) = \be(\mu)u$ for any $\mu \in \Qlat$ and
$u \in (U_q\mfg)_\mu$.

%%%%%%%%%%%%%%%%%%%%%%%%%%%

\subsection{The universal R-matrix} \label{sec:standardR}

We review the construction of the {\em full} universal R-matrix of $U_q\mfg$. 
For our choice of conventions, it is preferable to work with the operator
$\wtR \coloneqq \Theta^{-1} = \wb{\Theta}$,
which satisfies
\eq{ \label{tildeR:intw}
	\wtR\cdot \Del(u) = \wb{\Del(\wb{u})}\cdot\wtR
}
for any $u \in U_q\mfg$.
One has $\Ad(\ka_\id) \circ \wb{\Del(\wb{ \phantom{x} })} = \Del^\op$.
Therefore, by \eqref{quasiR:intw}, the universal R-matrix given by $R \coloneqq \ka_\id\cdot \wtR \in (U_q\mfg^{\ot 2})^{\mc O^\eps}$
(see e.g.\ \cite{Dr85,Ji86}) satisfies
\eq{ \label{R:intw}
	R \Del(u) = \Del^\op(u) R
}
for any $u\in U_q\mfg$. Moreover, the following coproduct identities hold:
\eq{ \label{R:Delta}
	(\Del \ot \id)(R) = R_{13} R_{23}\qq\mbox{and}\qq (\id \ot \Del)(R) = R_{13} R_{12}.
}
Finally, one has
\eq{ \label{R:symmetries}
	(\om \ot \om)(R) = {R}_{21} \qq\mbox{and}\qq (\tau \ot \tau)(R) = R
}
for any $\tau \in \Aut(A)$.

%%%%%%%%%%%%%%%%%%%%%%%%%%%

\subsection{Diagrammatic factorizations}
The factorization of $R$ has the following interpretation.
As a bialgebra, $U_q\mfb^-$ projects onto $U_q\mfh$. 
By a theorem of Radford \cite{Rad92}, the Borel bialgebra $U_q\mfb^-$ can be realized as the so-called Radford biproduct of $U_q\mfh$ and $U_q\mfn^-$, where the latter should be regarded as a \emph{Yetter-Drinfeld} $U_q\mfh$-module (see \eg \cite[Sec.~2.17]{ATL18}).
As the bialgebra structure on $U_q\mfb^-$ is recovered by those on $U_q\mfh$ and $U_q\mfn^-$, in the same way the R-matrix of (the quantum double of) $U_q\mfb^-$ is realized as the product of those of $U_q\mfh$ and $U_q\mfn^-$, represented respectively by $\ka_\id$ and $\wtR$. 
A similar phenomenon can be described more generally, when $U_q\mfh$ is replaced by a diagrammatic subalgebra $U_q\mfb^-_X$, $X\subseteq\indI$.\\

Let $X\subseteq\indI$ be arbitrary (in particular, not necessarily of finite type). 
There is a unique {\em diagrammatic} quasi-R-matrix 
\eq{
	\Theta_X \in \prod_{\la \in \Qlat^+_X} (U_q\mfn^-)_{-\la} \ot (U_q\mfn^+)_\la
}
such that $\Theta_{X,0}=1 \ot 1$ and 
\eq{ \label{quasiRX:intw}
	\Theta_X \wb{\Del(u)} = \Del(\wb{u}) \Theta_X
}
for any $u\in U_q\mfg_X$. Moreover, for any $\ka\in\Fun(\Plate\times\Plate,\F)$ such that
$\ka(\mu+\nu,\mu') = \ka(\mu,\mu'+\nu)$ for $\nu \in \Qlat_X$ and $\mu,\mu' \in \Plate$,
we have
\eq{ \label{quasiRX:Cartan0}
[\ka , \Theta_X ]=0
}
In particular, for any $\mu\in\Qlat$, we have
\begin{align} 
\label{quasiRX:Cartan} [t_\mu \ot t_\mu, \Theta_X] &=0 
\end{align}
and, for any $\be \in \Fun_X(\Plate,\F)$ where 
\eq{
	\Fun_X(\Plate,\F) := \{\be \in \Fun(\Plate,\F) \;|\;\be(\mu+\nu) = \be(\nu) \text{ for all } \mu \in \Qlat_X, \nu \in \Plate\}\,,
} 
we have
\begin{align}
\label{quasiRX:zeta1} [\be \ot 1,\Theta_X] &= 0\,.
\end{align}  
We shall need the following result.

\begin{proposition} \label{prop:quasiR:quotient}
We have
	\[
	\Theta \Theta_X^{-1} \in \prod_{\la \in \Qlat^+ \backslash\Qlat^+_X} (U_q\mfn^-)_{-\la} \ot (U_q\mfn^+)_\la.
	\]
\end{proposition}

\begin{proof}
Set $\wtR_X \coloneqq \Theta_X^{-1} = \wb{\Theta_X}$, so that $\wtR_X \Del(u) = \wb{\Del(\wb{u})}\cdot \wtR_X$ for all $u \in U_q\mfg_X$.
Note that $U_q\mfb^-$ projects, as a bialgebra, onto the quantum Levi subalgebra $U_q\mfn^-_XU_q\mfh$. 
Similarly, the latter projects onto $U_q\mfh$. 
Thus, by a double application of Radford's theorem and \cite[Prop.~4.8]{ATL18}, we obtain a refined factorization 
\eq{ \label{R:factorization}
R=\ka_{\id}\cdot \wtR_X\cdot\ol{R}
}
where $\ol{R}\in\prod_{\la \in \Qlat^+ \backslash\Qlat^+_X} (U_q\mfn^-)_{-\la} \ot (U_q\mfn^+)_\la$. 
The result follows.
\end{proof}

\begin{remark}\label{rmk:kr-factorization}
When $I$ is of finite type, the result is an obvious consequence of the so-called Kirillov--Reshetikhin factorization of the quasi-R-matrix \cite{KR90}.
\hfill\rmkend
\end{remark}

%%%%%%%%%%%%%%%%%%%%%%%%%%%%%%%%%%%%%%%%%%%%%%%%%%%%%%%%%%%%%%%%%

\section{Quantum Weyl groups and integrable representations} \label{s:qW-int}

We review the basic properties of the category of integrable $U_q\mfg$-modules, including
the action of the quantum Weyl group and the associated diagrammatic half-balances, which will
be used in Section \ref{s:coideal}.

%%%%%%%%%%%%%%%%%%%%%%%%%%%

\subsection{Integrable $U_q\mfg$-modules}\label{ss:int-mod-Uqg}

Recall that a $U_q \mfg$-module is \emph{integrable} if it has a locally finite $U_q\mfg_{\{ i \}}$-action for all $i \in \indI$. 
We denote\footnote{We used the same symbol for the analogue category of $\mfg$-modules. From now on we only consider $U_q\mfg$-modules.} by $\mc W_\int$ the full subcategory of integrable objects in $\mc W$, cf.\ Section~\ref{ss:quantum-cat-O}.
For $\eps\in\{\pm\}$, let $\mc O^\eps_\int\coloneqq\mc O^\eps\cap\mc W_\int$ denote the category of integrable category $\mc O^\eps$ modules. 
This is a semisimple category whose simple objects are given by highest-weight (if $\eps = +$) or lowest-weight (if $\eps = -$) modules.
Moreover, $\mc O^\eps_\int$ is a braided tensor subcategory of $\mc O^\eps$ with braiding induced by the action of the R-matrix.

\begin{remark} \label{rmk:omega:equivalence}
The pullback of the Chevalley involution defines a braided tensor equivalence $\mc O_\int^+\to\mc O_\int^{-, \op}$, where the latter category is endowed with the opposite tensor product and braiding, cf.\ Section~\ref{R:symmetries}. 
Note that, if $\mfg$ is of finite type, $\mc O^+_\int = \mc O^-_\int$ and $\omega^*$ is an autoequivalence at the level of abelian categories. 
However, if $\dim(\mfg)=\infty$, then every non-trivial module in $\mc O^\eps_\int$ is infinite-dimensional and $\mc O^+_\int\cap\mc O^-_\int$ consists only of trivial representations. \hfill \rmkend
\end{remark} 

Let $(U_q\mfg)^{\mc O^\eps_\int}$ be the completion of $U_q\mfg$ with respect to category $\mc O^\eps_\int$. 
By restriction, one gets a canonical morphism $(U_q\mfg)^{\mc O^\eps}\to(U_q\mfg)^{\mc O^\eps_\int}$. By \cite[Prop.~3.5.4]{Lus94}, the category ${\mc W}_\int$ separates points.
By \cite{ATL22, E22}, the same result holds for category $\mc O^\eps_\int$, 
thus $U_q\mfg$ embeds into 
$(U_q\mfg)^{\mc O^\eps_\int}$ through $(U_q\mfg)^{\mc O^\eps}$. 
In $(U_q\mfg)^{\mc O^+_\int}$, we shall consider elements of the form
\[
u = \sum_{\mu \in \eps \Qlat^+} c_\mu u_\mu
\]
with $u_\mu \in (U_q\mfn^\eps)_\mu$ and $c_\mu \in U_q\mfb^{-\eps}$ (cf.~\cite[Question~8.2]{Dr90b}).

%%%%%%%%%%%%%%%%%%%%%%%%%%%

\subsection{Quantum Weyl group operators}  \label{sec:internalbraidaction}

Let $M \in \mc W_\int$.
For $j \in \indI$, we denote by $\wt T_j$ Lusztig's operator $T''_{j,1}$ from \cite[5.2.1]{Lus94} (see also \cite{KR90, LS90}).
Recall that this is the element of $(U_q \mfg)^{\mc W_\int}$ defined on $M \in \mc W_\int$ by
\eqn{
\wt T_j|_{M_\mu} \coloneqq \sum_{a,b,c \in \Z_{\ge 0} \atop a-b+c = -\mu(h_j)} (-1)^b q_j^{b-ac} E_j^{(a)} F_j^{(b)} E_j^{(c)}|_{M_\mu}
}
for any $\mu \in \Plate$. We have $\wt T_j(M_\mu) \subseteq M_{\mu - (a-b+c) \al_j} = M_{s_j(\mu)}$ for any $\mu \in \Plate$. 
By \cite[5.2.3]{Lus94} $\wt T_j$ is invertible. It is well-known that the operators $\wt T_j$ satisfy the generalized braid relations \eqref{eq:gen-braid} and thus induce an action of $\Br{W}$ on any integrable representation \cite[39.4.3]{Lus94}. At $q=1$, this reduces to the action by triple exponentials described in Section \ref{ss:km-braid-int}.\\

%%%%%%%%%%%%%%%%%%%%%%%%%%%

For $j \in \indI$, let $\Ad(\wt T_j)$ 
be the algebra automorphism of $(U_q\mfg)^{\mc W_\int}$ given by conjugation by $\wt T_j$.
By \cite[37.1]{Lus94}, $\Ad(\wt T_j)$ preserves $U_q\mfg$ and $U_q\mfg'$. Thus, it restricts
to an automorphism of $U_q\mfg$ and satisfies $\wt T_j(x \cdot m) = \Ad(\wt T_j)(x) \cdot \wt T_j(m)$ for any $x \in U_q \mfg$ and $m \in M \in \mc W_\int$.
Also, by \cite[5.2.3 and 37.2.4]{Lus94} we have
\eq{ \label{Tj:bar}
\wb{\Ad(\wt T_j)(u)} = (-1)^{\mu(h_j)} q^{-(\al_j,\mu)} \Ad(\wt T_j^{-1})(\wb{u})\, ,
}
for any  $\mu \in \Qlat$ and $u \in (U_q\mfg)_\mu$.
%%%%%%%%%%%%%%%%%%%%%%%%%%%

\subsection{Diagrammatic operators}
We shall be interested in distinguished operators arising from finite type subdiagrams $X \subseteq I$. 
In this case, there is a well-defined element
\eq{ \label{TX:def}
	\wt T_X \coloneqq \wt T_{j_1} \cdots \wt T_{j_\ell} \in U_q \mfg^{\mc W_\int}
}
where $w_X = s_{j_1} \cdots s_{j_\ell}$ is any reduced expression of the longest element in $W_X$. 
Note that $\wt T_X$ is invertible and maps $M_\mu$ to $M_{w_X(\mu)}$, for all $M \in \mc W_\int$ and $\mu \in \Plate$.\\

By the explicit formulae in \cite[37.1.3]{Lus94}, it follows that
$\Ad(\wt T_X)(E_i) \in U_q\mfn^+$ for all $i \not\in X$.
Moreover, if $\tau \in \Aut_X(A)$, then, by the uniqueness of the longest element, 
we have $\tau\circ \Ad(\wt T_X) = \Ad(\wt T_X)\circ\tau$. Note also that
\eq{ \label{TX:rootspace}
\Ad(\wt T_X)(U_q\mfg_\mu) \subseteq U_q\mfg_{w_X(\mu)} 
}
for any $\mu\in\Qlat$. Define the algebra automorphism
\eq{ \label{tw:def}
\wt \om \coloneqq \om \circ \Ad(G_{\id,\rho}) = \Ad(G_{\id,-\rho}) \circ \om
}
where $\Ad(G_{\id,\rho})$ is given by \eqref{G:Ad}. 
Note that $\wt \om$ coincides with the automorphism $\tw$ from \cite[Sec.~7.1]{BK19}.
By \cite[Prop.~8.20]{Jan96}, one gets	
\eq{ \label{T:fixed}
\Ad(\wt T_X)|_{U_q\mfg_X} = \wt \om^{-1} \circ \oi_X|_{U_q\mfg_X}.
}
where $\oi_X$ is the opposition involution from \ref{sec:subdiagrams}.
By \cite[Lem.~7.1]{BK19}, $\wt \om$ commutes with $\Ad(\wt T_i)$ for any $i \in \indI$ and hence
\eq{
\label{tw:T:commute} \wt \om \circ \Ad(\wt T_X) = \Ad(\wt T_X) \circ \wt \om 
}
Recall that if $s_{j_1} \cdots s_{j_\ell} = w_X$ is a reduced decompostion, then the positive
roots in $\Phi_X$ are explicitly given by
\eq{
\Phi_X^+ = \{ \al_{j_1} ,s_{j_1}(\al_{j_2}), \ldots, (s_{j_1} \cdots s_{j_{\ell-1}})(\al_{j_\ell})\}.
}
Hence, \eqref{Tj:bar} implies
\eq{ \label{TX:bar}
\wb{\Ad(\wt T_X)(u)} = (-1)^{\mu(2\rho^\vee_X)} q^{-(2\rho_X,\mu)} \Ad(\wt T_X^{-1})(\wb{u}), \qq 
}
for any  $\mu \in \Qlat$ and $u \in (U_q\mfg)_\mu$.
Note that this is the key property used in \cite{BK19} to relate intertwiners for the subalgebra $U_q\mfk$ to the bar involution. 

%%%%%%%%%%%%%%%%%%%%%%%%%%%

\subsection{Pullback of integrable modules} 
As mentioned above, for any $j \in I$, $\Ad(\wt T_j)$ preserves $U_q\mfg$
and therefore it gives rise to a restriction functor on $U_q\mfg$-modules, which we refer
to as its pullback. We shall need the following. 
\begin{lemma} \label{lem:AdTj:Oint}
For all $j \in \indI$, the pullback $\Ad(\wt T_j)^*$ preserves integrable (resp. category $\mc O^\eps$ integrable) $U_q\mfg$-modules.
\end{lemma}

\begin{proof}
Let $M \in \mc W_\int$. 
For any $\mu\in\Plate$, we have $\Ad(\wt T_j)^*(M)_\mu=M_{s_j \mu}$,
therefore $\Ad(\wt T_j)^*(M)$ decomposes into weight spaces.
We shall prove that the action of $\Ad(\wt T_j)(U_q \mfg_{ \{ i \} })$ is locally finite for all $i \in \indI$.
Since the module $M$ is integrable, the subspace $U_q \mfg_{ \{ i \} } \cdot m$ is finite-dimensional for all $m \in M$. Therefore, $\wt T_j(U_q \mfg_{ \{ i \} } \cdot \wt T_j^{-1}(m))$ is finite-dimensional 
and $\Ad(\wt T_j)(U_q \mfg_{ \{ i \} }) = \wt T_j U_q \mfg_{ \{ i \} } \wt T_j^{-1}$ acts locally finitely on $\Ad(\wt T_j)^*(M)$. Similarly, since in this case the weight spaces are finite-dimensional, 
in order to show that $\Ad(\wt T_j)^*$ maps $\mc O^\eps_\int$ into $\mc O^\eps_\int$, it suffices to prove that $U_q\mfn^\eps$ act locally finitely on $\Ad(\wt T_j)^*(M)$ for all $M \in \mc O^\eps_\int$, which follows as before.
\end{proof}

%%%%%%%%%%%%%%%%%%%%%%%%%%%

\subsection{Diagrammatic half-balances}\label{ss:diag-half-bal}

As mentioned in Remark \ref{rmk:kr-factorization}, one of the main applications of the diagrammatic operators $\wt{T}_X$ is to provide an alternative description of the quasi-R-matrix $\Theta_X$. 
Indeed, by \cite[Thm.~3]{KR90} and \cite{LS90} (see also \cite[Lem.~3.8]{BK19}), we have the following coproduct formula:
\begin{align} 
\label{quasiRX:factorized:1}
\Theta_X &= \Del(\wt T_X^{-1}) \cdot (\wt T_X \ot \wt T_X). 
\end{align}
and therefore
\begin{align} 
\label{tildeRX:factorized:1}
\wtR_X &= (\wt T_X^{-1} \ot \wt T_X^{-1}) \cdot \Del(\wt T_X).
\end{align}
By \eqref{T:fixed}, the square of the operator $\wt{T}_X$ is almost central in $U_q\mfg_X$. 
Thus, for finite type subdiagrams, $\wt{T}_X$ is essentially a half-balance for $U_q\mfg_X$ up to a Cartan correction. This is particularly simple in the formal setting, \ie $q=e^{\hbar/2}\in\C[\negthinspace[\hbar]\negthinspace]$. In this case, the operator $q^{h_i(h_i+1)/2}\wt{T}_{s_i}$ is a half-balance for the quantum $\mathfrak{sl}_2$ subalgebra corresponding to $\alpha_i$. Note that this is a key property, which implies the rigidity of the representations of $\Br{W}$ given by the quantum Weyl group operators are essentially rigid cf.~\cite{ATL15}.
More generally, Kamnitzer and Tingley in \cite{KT09} (see also \cite[Definition\ 3.9]{ST09}) proved that the modified operator
\eq{ \label{hatT:def-KT}
	T_X \coloneqq G_{\id,\rho_X}\cdot \wt T_X = \wt T_X\cdot G_{\id,-\rho_X } 
}
satisfies 
\eq{\label{eq:RX-coprod-id}
{R}_X = (T_X^{-1} \ot T_X^{-1}) \cdot \Del(T_X)
}
where  $R_X$ is the universal R-matrix of $U_q\mfg_X$. Furthermore, $T_X^2$ is central and
therefore $T_X$ is a half-balance for the diagrammatic subalgebra $U_q\mfg_X$, cf.~ Section \ref{ss:bal-half-bal}.

In the next section, we shall discuss a modification of such diagrammatic half-balances, which
depends on the choice of a \emph{generalized Satake diagram} (cf.\ Section \ref{ss:mod-lusz-op}).

%%%%%%%%%%%%%%%%%%%%%%%%%%%%%%%%%%%%%%%%%%%%%%%%%%%%%%%%%%%%%%%%%

\section{Classical and quantum pseudo-fixed-point subalgebras} \label{s:coideal}

In this section we define the notions of (classical and quantum) \emph{pseudo}-fixed-point subalgebras of Kac-Moody type,  following \cite[Sections 2 and 3]{RV21}. 
Their combinatorial datum is given in terms of a \emph{generalized} Satake diagram, whose definition is a generalization of the finite type theory from \cite{RV20}. 
As a special case, one recovers the fixed-point subalgebras of involutions and their quantizations, whose theory for Kac-Moody type is developed in \cite{KW92, BBBR95} and \cite{Ko14}, for the classical and quantum case, respectively.\\

Along the way, we introduce a modification, depending upon a generalized Satake diagram, of the diagrammatic R-matrix and longest quantum Weyl group operator. 
%Interestingly, 
These can be interpreted as \emph{modified diagrammatic half-balances}.

%%%%%%%%%%%%%%%%%%%%%%%%%%%

\subsection{Generalized Satake diagrams}
Just as Kac-Moody algebras $\mfg$ and their quantizations $U_q\mfg$ are defined in terms of the combinatorial datum $(I,A)$, we will define certain subalgebras of $\mfg$ and $U_q\mfg$ by adjoining some combinatorial datum which can be seen as a decoration of the Dynkin diagram.
We assume that $A$ is a symmetrizable indecomposable generalized Cartan matrix.

\begin{definition} \label{def:GSat}
Let $X \subseteq \indI$ be of finite type and $\tau \in \Aut_X(A)$ such that $\tau^2 = \id_I$ and $\tau|_X = \oi_X$.
We call a node $i\in \indI$ \emph{unsuitable for $(X,\tau)$} if $i \notin X$, $\tau(i)=i$ and the connected component of $X \cup \{ i \}$ containing $i$ is of type $\mathsf A_2$, or, equivalently, $\theta(\al_i) = -\al_i-\al_j$ and $a_{ji}=-1$ for some $j \in X$.
We call $(X,\tau)$ a \emph{generalized Satake diagram} if $\indI$ has no unsuitable nodes for $(X,\tau)$ and write $\GSat(A)$ for the set of generalized Satake diagrams. 
\hfill\defnend\end{definition}

This notion arose in \cite{He84} for $\mfg$ of finite type in the study of root system involutions and associated restricted Weyl groups.
In \cite{RV20} this generalization was found to describe coideal subalgebras of $U_q\mfg$ possessing a universal K-matrix for $\mfg$ of finite type (for suitable parameters).

%%%%%%%%%%%%%%%%%%%%%%%%%%%

\subsection{Pseudo-involutions} \label{sec:pseudoinvolutions}

From now on fix $(X,\tau) \in \GSat(A)$ and assume that the extended weight lattice $\Plate$ is $\tau$-compatible.
We consider the \emph{pseudo-involution}
\eq{ \label{theta:def}
\theta = \theta(X,\tau) \coloneqq \Ad(\wt w_X) \circ \om \circ \tau \in \Aut(\mfg)
}
cf.\ \cite[4.38-4.39]{KW92} and \cite[(2.26)]{RV21}.
From the fact that $\theta$ stabilizes $\mfh$ it follows that the dual map of $\theta$, $\theta^* \in \GL(\mfh^*)$ permutes root spaces: $\theta(\mfg_\al) = \mfg_{\theta^*(\al)}$. 
Therefore $\theta^*$ preserves $\Phi$ and hence also $\Qlat$. 
Since $\Plate$ is $\tau$-compatible, $\theta^*$ also preserves $\Plate$.\\

The terminology ``pseudo-involution'' is motivated by the fact that $\theta$ has properties similar to an honest Lie algebra involution.
Namely, $\theta^2(\mfg_\al) = \mfg_\al$ for all $\al \in \Phi$ and $\theta$ restricts to $\mfh$ and $\mfh'$ as an involution, namely $- w_X \circ \tau$.
The dual map $\theta^*$ is also given by the formula $-w_X \circ \tau$ and therefore we also denote it by $\theta$ henceforth.
Because the three automorphisms $\Ad(\wt w_X)$, $\om$ and $\tau$ commute, see \cite[Prop.~2.2 (3)]{Ko14}, it follows from \eqref{AdmX:square} that 
\eq{ \label{theta:squared}
\theta^2|_{\mfg_\al} = (-1)^{\al(2\rho^\vee_X)} \id_{\mfg_\al} \qq \text{for all } \al \in \Phi.
}

The above statements are satisfied for any pair $(X,\tau)$ with $X$ of finite type and $\tau \in \Aut_X(A)$ involutive.
For the following property of $\theta$ we also require the condition $\tau|_X = \oi_X$.
Combining \eqref{AdmX:X} and \eqref{theta:def} we obtain
\eq{ \label{theta:fixed}
\theta|_{\langle \mfg_X , \mfh^\theta \rangle} = \id_{\langle \mfg_X , \mfh^\theta \rangle}
}
and hence
\eq{ \label{theta:relation}
(\tau-\id) \circ (\theta-\id) = 0
}
as an identity in $\End_\C(\mfh)$ or $\End_\C(\mfh^*)$, see \cite[Equation\ (5.7)]{Ko14}.
From \eqref{theta:fixed}-\eqref{theta:relation} it follows that
\eq{ \label{htheta:decomposition}
(\mfh')^\theta = \mfh_X \oplus \bigoplus_{i\not\in X \atop i<\tau(i)} \C(h_i-h_{\tau(i)})
\qq\mbox{and}\qq
(\mfh')^{-\theta} = \bigoplus_{i\not\in X \atop i \le \tau(i)} \C(h_i + h_{\tau(i)}),
}
so that the number of $\tau$-orbits in $\indI \backslash X$ equals $\dim\big((\mfh')^{-\theta}\big)$,
which we refer to as \emph{restricted rank} of $(X,\tau)$. 
Note that $\mfh^\theta \subseteq \mfh'$ if $\cork(A) \leqslant 1$, in particular if $A$ is of finite or affine type.

%%%%%%%%%%%%%%%%%%%%%%%%%%%

\subsection{Comparison with standard Satake diagrams}\label{s:coideal:1}
One can always choose a group homomorphism $s \in \Hom_\grp(\Qlat,\{1,-1\})$ such that
\eq{ \label{s:condition}
\begin{split}
s(\al_i)&=1 \,\qq\qq\qq\qq\;\; \text{ if } i \in X \text{ or } \tau(i)=i \\ 
s(\al_{\tau(i)}) &= (-1)^{\al_i(2\rho^\vee_X)} s(\al_i) \qq \text{ otherwise}
\end{split}
}
(see \eg \cite[Eqns.\ (5.1) and (5.2)]{BK19}) and consider with the modified automorphism 
$\ol \theta = \Ad(s) \circ \theta$.
By \eqref{AdmX:square}, for any $i\in \indI$, one has
\[
\ol \theta^2|_{\mfg_{\al_i}} = s(\al_i)s(\theta(\al_i)) \Ad(\wt w_X)^2|_{\mfg_{\al_i}} = \frac{s(\al_i)}{s(\al_{\tau(i)})} (-1)^{\al_i(2\rho^\vee_X)} \id_{\mfg_{\al_i}}.
\]
It follows that $\ol \theta$ is an involution if $(X,\tau)$ is a \emph{Satake diagram},\ie if, in addition to the conditions in Definition \ref{def:GSat}, it holds $\al_i(\rho^\vee_X) \in \Z$ for any $i\not\in X$ such that $\tau(i)=i$ (cf.~ \cite[Definition\ 2.3]{Ko14}).
One checks that if $i\in \indI$ is an unsuitable node for $(X,\tau)$, then $i\not\in X$, $\tau(i)=i$, and $\al_i(\rho^\vee_X) = - \frac{1}{2}$. Thus, in the case of Satake diagrams, 
there are no unsuitable nodes and every Satake diagram is a generalized Satake diagram. Satake diagrams are known to describe and, up to a natural equivalence, classify involutive automorphisms of $\mfg$ of the second kind, see \cite[App.\ A]{Ko14} and cf.\ \cite[5.33]{KW92} and \cite[4.4-4.5]{BBBR95}.

%%%%%%%%%%%%%%%%%%%%%%%%%%%

\subsection{Pseudo-fixed-point subalgebras} \label{sec:pseudofixedpoint}
The introduction of unsuitable nodes in Definition \ref{def:GSat} is motivated by the following
generalization of fixed-point subalgebras with respect to an involutive Lie algebra automorphisms.
For $\bm i\in \indI^\ell$, $\ell \in \Z_{> 0}$, set
\[
f_{\bm i} \coloneqq \big( \ad(f_{i_1}) \circ \cdots \circ \ad(f_{i_{\ell-1}}) \big)(f_{i_\ell})
\]
and consider the vector space $\mfg_{\bm i,\theta} = \Sp_\C \big\{ f_{\bm i} , \theta(f_{\bm i}) \big\}$.
Choose a subset $\mc J \subset \cup_{\ell \in \Z_{> 0}} I^\ell$ such that the set $\{ f_{\bm i} \, | \, {\bm i} \in \mc J \}$ is a basis of $\mfn^-$.
Note that, for all $\bm i \in \mc J$, we have $\theta(f_{\bm i}) = f_{\bm i}$, if $\bm i \in X^\ell$ for some $\ell>0$, and $\theta(f_{\bm i}) \in \mfn^+$ otherwise.
Hence $\mfg_{\bm i,\theta}$ is one-dimensional if $\bm i \in X^\ell$ for some $\ell >0$ and two-dimensional otherwise.
We have the following decomposition of $\mfg$ as an $\ad(\mfh^\theta)$-module:
\eq{ \label{g:thetadecomposition}
\mfg = \mfn^+_X \oplus \mfh^\theta \oplus \bigoplus_{\bm i \in \mc J} \mfg_{\bm i, \theta} \oplus \mfh^{-\theta}.
}
Let $\bm \ga = (\ga_i)_{i\in \indI} \in (\C^\times)^{I}$ such that $\ga_i=1$ if $i \in X$.
We denote by the same symbol $\bm \ga$ the element of $\Hom_\grp(\Qlat,\C^\times)$ given by
$\bm \ga(\al_i) = \ga_i$.
Set
\eq{
\theta_{\bm \ga}  \coloneqq {\Ad(\bm \ga)} \circ \theta \in \Aut(\mfg).
}
Then $\theta_{\bm \ga}|_\mfh = \theta|_\mfh = -w_X \circ \tau$, $\theta_{\bm \ga}(\mfg_\al) = \mfg_{\theta(\al)}$ and $\theta_{\bm \ga}|_{\langle \mfg_X,\mfh^\theta \rangle} = \id_{\langle \mfg_X,\mfh^\theta \rangle}$ for all $\al \in \Phi$.
If $\theta_{\bm \ga}$ is indeed involutive, the decomposition of the fixed-point subalgebra $\mfg^{\theta_{\bm \ga}}$ will be supported in the first three components of \eqref{g:thetadecomposition} and its projection onto $\mfg_{\{ i \},\theta}$ is 
\[
\C (f_i + \theta_{\bm \ga}(f_i)) = \C (f_i + \ga_i \theta(f_i)).
\]
The following definition naturally generalizes fixed-point subalgebras of involutive algebra automorphisms of the second kind, cf.\ \cite[Definition\ 2 and Rmk.\ 3 (i)]{RV20}.

\begin{definition}
Suppose that $X \subseteq \indI$ is of finite type and that $\tau \in \Aut_X(A)$ is an involution satisfying $\tau|_X = \oi_X$; furthermore suppose that $\bm \ga \in (\C^\times)^{I}$ {such that $\ga_i=1$ for all $i \in X$}.
In terms of \eqref{theta:def}, we define the \emph{pseudo-fixed-point subalgebra}
\eqn{
\mfk = \mfk_{\bm \ga}(X,\tau) = \langle \mfg_X , \mfh^{\theta} , \{ b_i \, | \, i\not\in X \}\rangle = \langle \mfn^+_X, \mfh^\theta, \{ b_i \, | \, i\in \indI \} \rangle,
}
where 
\begin{flalign}
&& b_i = b_{i;\ga_i} \coloneqq \begin{cases} f_i & \text{if } i \in X, \\ f_i + \ga_i \theta(f_i) & \text{otherwise}. \end{cases} && \defnend
\end{flalign}
\end{definition}

%%%%%%%%%%%%%%%%%%%%%%%%%%%

The subalgebra $\mfk$ is closely related to the fixed-point subalgebra $\mfg^\theta$ as we
explain in the following. Note that $\mfk \cap \mfg^\theta$ contains $\langle \mfg_X ,\mfh^\theta \rangle$ and $b_i \in \mfg_{\{ i \},\theta}$.
It is natural to require $\mfk$ to be supported in the first three components of \eqref{g:thetadecomposition} and to be such that the projection on $\mfg_{\{ i \},\theta}$ is $\C b_i$.
Consider 

\begin{align}\label{Ieq:def} 
\begin{split}
\Ieq &\coloneqq \{ i\in \indI \, | \, i<\tau(i), \, (\theta(\al_i),\al_i)=0  \}\\ 
&\;= \{ i\not\in X \, | \, i<\tau(i), \, \forall j \in X \cup \{ \tau(i)\} \; a_{ij} = 0 \}, \\[0.5em]
\Ga &\coloneqq \{ \bm \ga \in (\C^\times)^{\indI} \; | \; \forall i \in X \; \ga_i = 1 \text{ and } \forall i \in \Ieq \; \ga_i = \ga_{\tau(i)}  \}.
\end{split}
\end{align}
The following key result motivates the definition of a generalized Satake diagram.
Set $b_{\bm i} \coloneqq \big( \ad(b_{i_1}) \circ \cdots \circ \ad(b_{i_{\ell-1}}) \big)(b_{i_\ell})$ for $\bm i\in \indI^\ell$.

\begin{theorem}[{\cite[Thm.~3.8]{RV21}}] \label{thm:GSat}
Let $X \subseteq \indI$ be of finite type, $\tau \in \Aut_X(A)$ an involution satisfying $\tau|_X = \oi_X$, and $\bm \ga \in (\C^\times)^{\indI}$ such that $\ga_i=1$ for all $i \in X$.
The following statements are equivalent:
\begin{enumerate}\itemsep0.25cm
\item $(X,\tau) \in \GSat(A)$ and $\bm \ga \in \Ga$;
\item $\mfk = \mfn^+_X \oplus \mfh^\theta \oplus \bigoplus_{\bm i \in \mc J} \C b_{\bm i}$ as $\ad(\mfh^\theta)$-modules;
\item $\mfk \cap \mfh = \mfh^\theta$.
\end{enumerate}
\end{theorem}

Note that from (ii) we deduce that $\mfk$ projects onto the first three summands in the decomposition \eqref{g:thetadecomposition}. Moreover, if $\cork(A)\leqslant 1$, then $\mfk \subseteq \mfg'$,
since in this case $\mfh^\theta \subseteq \mfh'$.\\

As in \cite[Cor.~2.9]{Ko14}, the generators of the universal enveloping algebra $U(\mfk_{\bm \ga}(X,\tau))$ corresponding to the $b_i$ can be further modified by adding a scalar term, 
yielding an additional tuple of parameters. A similar phenomenon occurs in the q-deformed 
case discussed in \cite[Rmk.~5.10]{Le99} and \cite{Ko14},  yielding however a non-trivial
deformation.

%%%%%%%%%%%%%%%%%%%%%%%%%%%

\subsection{Modified diagrammatic half-balances}\label{ss:mod-lusz-op}
As before, let $A$ be a symmetrizable indecomposable generalized Cartan matrix, $(X,\tau) \in \GSat(A)$, and $\Plate$ a $\tau$-compatible extended weight lattice.\\

Note that $\Ad(\wt w_X)$ and $\om$ commute as elements of $\Aut(\mfg)$ and $\Aut(U\mfg)$.
By  \eqref{tw:T:commute}, the quantum analogues $\Ad(\wt T_X)$ and $\wt \om$ also commute.
However,  $\wt \om$ is not an involution and the interaction of $\Ad(\wt T_X)$ and $\wt \om$ with the quasitriangular bialgebra structure of $U_q\mfg$ is not optimal.
On the other hand, the involution $\om \in \Aut_\alg(U_q\mfg)$ is a coalgebra antiautomorphism 
and does interact nicely with the quasitriangular bialgebra structure.
Similarly, one can use instead the element $ T_X$ as in \eqref{hatT:def-KT}, 
which commutes with the undeformed $\om$ and resolves the diagrammatic universal R-matrix \eqref{eq:RX-coprod-id}.\\

Modifying the definition of $T_X$, we introduce another correction of the element $\wt T_X$ 
which enjoys similar properties and is explicitly tailored around $U_q\mfk_{\bm \ga,\bm \si}$.

\begin{definition}\label{defn:mod-lusz-op}
The \emph{modified diagrammatic half-balance} associated to the generalized Satake diagram 
$(X,\tau)$ is the operator on integrable $U_q\mfg_X$-modules
\begin{flalign} \label{hatT:def}
&& T_{X,\tau} \coloneqq G_{\theta(X,\tau),\rho_X} \wt T_X = \wt T_X G_{\theta(X,\tau),-\rho_X } 
\in (U_q\mfg_X)^{\mc W_\int}\,, && 
\end{flalign}
where $\theta(X,\tau)$ is the pseudo-involution associated to $(X,\tau)$ and 
$G_{\theta(X,\tau),\rho_X} \in \Fun(\Plate , \F^\times)$ is defined in \eqref{G:def}.
\hfill\defnend
\end{definition}
Note that $T_{X,\tau}$ is an invertible element in $(U_q\mfg)^{\mc W_\int}$ and $T_{X,\tau}(M_\la) \subseteq M_{w_X(\la)}$ for any $M \in \mc W_\int$ and $\la\in\Plate$.

\begin{remark}
If $I$ is of finite type and $X=\indI$, this coincides with the diagrammatic half-balance \eqref{hatT:def-KT} from \cite{KT09}
(cf. Section \ref{ss:diag-half-bal}). Similar modifications of
$\wt T_X$ also appeared in \cite[Sec.~4.4]{Ko14} and \cite[Sec.~4.2 and App.\ A]{CM18}.
\hfill\rmkend
\end{remark}

The algebra automorphism $\Ad(T_{X,\tau}) \in \Aut_\alg(U_q\mfg)$ satisfies several
useful properties, which motivate the definition of $T_{X,\tau}$.

\begin{lemma}
\hfill
\begin{enumerate}\itemsep0.25cm
	\item The analogue of \eqref{TX:rootspace} holds:
	\begin{align}
		\label{hatT:rootspace}
		\Ad(T_{X,\tau})(U_q\mfg_\la) &\subseteq U_q\mfg_{w_X(\la)}\,,
	\end{align}
	for any $\la\in\Qlat$\,.
	\item The analogue of \eqref{T:fixed} holds:
	\begin{align}
		\label{hatT:fixed}
		\Ad(T_{X,\tau})|_{U_q\mfg_X} &= \Ad(G_{\theta-\id,\rho_X-\rho}) \circ \om \circ \oi_X|_{U_q\mfg_X} = \om \circ \oi_X|_{U_q\mfg_X}\,.
	\end{align}
	\item The analogue of \eqref{tw:T:commute} holds:
	\begin{align}
		\label{om:hatT:commute} 
		\om \circ \Ad(T_{X,\tau}) &= \Ad(G_{\theta-\id,\rho-\rho_X}\wt T_X) \circ \wt \om = \Ad(T_{X,\tau}) \circ \om\,.
	\end{align}
	\item The analogue of  \eqref{TX:bar}  holds:
	\begin{align}
	\label{hatT:bar} 
	\wb{\Ad(T_{X,\tau})(u)} &= (-1)^{\la(2\rho^\vee_X)} \Ad(T_{X,\tau}^{-1})(\wb{u})\,,
	\end{align}
	for any $\la \in \Qlat$ and $u\in (U_q\mfg)_\la$.
\end{enumerate}
\end{lemma}

\begin{proof}
We note that \eqref{hatT:rootspace} and \eqref{hatT:fixed} follow, respectively, from \eqref{TX:rootspace} and \eqref{T:fixed}. 
The properties \eqref{wX:rhominusrhoX} and \eqref{tw:T:commute} imply that $\om$ and $\Ad(T_{X,\tau})$ commute, yielding \eqref{om:hatT:commute}. 
Finally, \eqref{hatT:bar} follows from \eqref{TX:bar}.  
\end{proof}

%%%%%%%%%%%%%%%%%%%%%%%%%%%

\subsection{Modified diagrammatic R-matrices} \label{sec:RX}
In analogy with Section \ref{ss:diag-half-bal}, the element $T_{X,\tau}$ can be thought of as an 
half-balance resolving a {\em modified} diagrammatic universal R-matrix (cf.\ \ref{ss:bal-half-bal}). Note that, since $\Plate$ is $\tau$-compatible, the map $\theta= \theta(X,\tau) = -w_X \circ \tau \in \End(\mfh^*)$ restricts to $\Plate$ and is self-adjoint.

\begin{definition}
The \emph{modified diagrammatic R-matrix} corresponding to $(X,\tau)$ is the operator
\begin{flalign} 
\label{RX:def}
&&	R_{X,\tau} \coloneqq \ka_{\theta(X,\tau)} \wtR_X && 
\end{flalign}
where $\ka_{\theta(X,\tau)}$ is defined as in \ref{ga:def}. \hfill \defnend
\end{definition}

The definition is motivated by the following result.

\begin{lemma}
\hfill
\begin{enumerate}\itemsep0.25cm
		\item  The intertwining identity holds:
	\begin{align}
		\label{RX:intw}
		R_{X,\tau} \Del(u) &= \Del^\op(u) R_{X,\tau}
	\end{align}
		for any $u \in U_q\mfg_X$.
	\item  The following coproduct identities holds:
	\begin{align}
		\label{RX:factorized}
		R_{X,\tau} &= (T_{X,\tau}^{-1} \ot T_{X,\tau}^{-1}) \cdot \Del(T_{X,\tau})\,,
	\end{align}
	and
	\begin{align}
		\label{RX:factorized2}
		(R_{X,\tau})_{21} &= \Del(T_{X,\tau}) \cdot (T_{X,\tau}^{-1} \ot T_{X,\tau}^{-1})\,.
	\end{align}
\end{enumerate}
\end{lemma}

\begin{proof}
We first prove \eqref{RX:factorized}. Since $\theta(X,\tau)$ and $w_X$ commute, as a consequence of \eqref{G:Delta} and \eqref{tildeRX:factorized:1} we have, as required,
\begin{flalign*}
&& R_{X,\tau} &= \ka_{\theta(X,\tau)} \cdot (\wt T_X^{-1} \ot \wt T_X^{-1}) \cdot \Del(\wt T_X) \\
&& &= (\wt T_X^{-1} \ot \wt T_X^{-1}) \cdot \ka_{\theta(X,\tau)} \cdot \Del(\wt T_X) \\
&& & = \big( (G_{{\theta(X,\tau)},\rho_X} \wt T_X)^{-1} \ot (G_{{\theta(X,\tau)},\rho_X} \wt T_X)^{-1} \big) \cdot \Del(G_{{\theta(X,\tau)},\rho_X} \wt T_X) \\
&& &= (T_{X,\tau}^{-1} \ot T_{X,\tau}^{-1}) \cdot \Del(T_{X,\tau}). && 
\end{flalign*}
Observing that $\theta(X,\tau)$ fixes $\mfh^*_X$ pointwise and recalling that $\theta(X,\tau)$ preserves roots, we obtain $\theta(X,\tau)|_{\Qlat_X} = \id_{\Qlat_X}$, so that
\[
\ka_{\theta(X,\tau)} \wb{\Del(\wb{u})} = \Del^\op(u) \ka_{\theta(X,\tau)}
\]
for all $u\in U_q\mfg_X$.
Finally, \eqref{RX:intw} follows from \eqref{tildeR:intw}.
Combining \eqref{RX:factorized} and \eqref{RX:intw}, we obtain \eqref{RX:factorized2}.
\end{proof}

%%%%%%%%%%%%%%%%%%%%%%%%%%%

\subsection{Quantum pseudo-involutions} \label{sec:q-pseudo-inv}
We have two triples of commuting algebra automorphisms, \ie $(\Ad(\wt T_X), \wt \om,\tau)$ and $( \Ad(T_{X,\tau}), \om, \tau)$, both of which specialize to the triple $(\Ad(\wt w_X),\om,\tau)$ of algebra automorphisms of $U\mfg$ providing a factorization of the pseudo-involution $\theta$.
Therefore, we obtain two distinct quantum analogues of $\theta$ (see also Remark \ref{rmk:no-sign}):
\eq{ \label{thetaq:def} 
\wt \theta_q = \wt \theta_q(X,\tau) \coloneqq \Ad(\wt T_X) \circ \wt \om \circ \tau \qq\mbox{and}\qq  \theta_q =  \theta_q(X,\tau) \coloneqq \Ad(T_{X,\tau}) \circ \om \circ \tau.
}
By \cite[Thm.~4.4 (1-2)]{Ko14}, the map $\wt \theta_q$ preserves certain nice properties of $\theta$, listed in Lemma \ref{lem:theta-theta} below. 
The same properties remain true for $\theta_q$, thanks to the relation
\eq{ \label{hattheta:relation}
\wt \theta_q = \Ad(G_{\id-\theta,\rho_X-\rho}) \circ  \theta_q =  \theta_q \circ \Ad(G_{\id-\theta,\rho-\rho_X}).
}
Note that \eqref{hattheta:relation} follows from \eqref{om:hatT:commute} and the observation that $\Ad(G_{\id-\theta,\rho_X-\rho})$ fixes $U_q\mfg_X$ pointwise (a consequence of the invariance of $G_{\id-\theta,\rho_X-\rho}$ under the translation action of $\Qlat_X$ on $\Plate$).
Therefore, we have the following
\begin{lemma}\label{lem:theta-theta}
\hfill
\begin{enumerate}\itemsep0.25cm
\item For any $\la\in\Qlat$, $\wt \theta_q((U_q\mfg)_\la) = \theta_q((U_q\mfg)_\la) = (U_q\mfg)_{\theta(\la)}$.
\item For any $h\in\Qlat^\vee_\ext$, $\wt \theta_q(t_h)=\theta_q(t_h)=t_{\theta(h)}$. 
\item For any $\la\in\Qlat$, $\wt \theta_q(t_\la) =  \theta_q(t_\la) = t_{\theta(\la)}$\,.
\end{enumerate}
In particular, $U_q\mfh$ and $U_q\mfg'$ are both $\wt \theta_q$-stable and $ \theta_q$-stable 
and
\eq{
	\label{thetaq:fixed} 
	\wt \theta_q(u)=u=\theta_q(u)\,.
}
for any $u\in U_q\mfg_X U_q\mfh^\theta$.
\end{lemma}

Note that \eqref{thetaq:fixed} follows immediately from \eqref{T:fixed} and \eqref{hatT:fixed},
since $\tau|_X = \oi_X$.

\begin{remark}
In summary, we have introduced in parallel two series of data 
\[
\big( \wt \om, \wt T_X, \Xi_X, \wt \theta_q(X,\tau) \big) \qq \longleftrightarrow \qq \big( \om,T_{X,\tau},R_{X,\tau},\theta_q(X,\tau) \big).
\]
The data on the left are predominant in \cite{BK19} and are crucial in the construction of the quasi-K-matrix.
However, once the quasi-K-matrix has been constructed, we will mainly work with their counterparts on the right, which enjoy more convenient relations with the coproduct structure. 
\rmkend
\end{remark}

%%%%%%%%%%%%%%%%%%%%%%%%%%%

\subsection{Quantum pseudo-fixed-point subalgebras}\label{sec:q-coideal}
Together with the subset $\Ieq$ of $\indI \backslash X$ defined in \eqref{Ieq:def}, we also consider
\eq{
\label{Ins:def}
\Ins \coloneqq \{ i\in \indI \, | \, \theta(\al_i) = -\al_i \} = \{ i\not\in X \, | \, \tau(i)=i, \, \forall j \in X \; a_{ij}=0 \}.
}
Accordingly, we consider the following sets of parameter tuples:
\begin{align}
\Ga_q &= \Ga_q(X,\tau) \coloneqq \{ \bm \ga \in (\F^\times)^{\indI} \; | \; \forall i \in X \; \ga_i = 1 \text{ and } \forall i \in \Ieq \; \ga_i = \ga_{\tau(i)} \} \\
\Si_q &= \Si_q(X,\tau) \coloneqq \big\{ \bm \si \in \F^{\indI} \, \big| \, \forall i\in \indI \backslash \indI_{\rm ns} \; \si_i = 0 \text{ and } \forall (i,j)\in \indI_{\rm ns}^2 \; a_{ij} \in 2\Z \text{ or } \si_j = 0 \big\}. \hspace{-8pt}  
\end{align}

\begin{definition} \label{Uqk:def}
Let $(X,\tau) \in \GSat(A)$ and $(\bm \ga,\bm \si) \in \Ga_q\times \Si_q$. 
For $i \in I$ we set
\eq{
\label{Bi:def}
B_i = B_{i;\ga_i,\si_i}(X,\tau) \coloneqq \begin{cases} 
F_i & \text{if } i \in X, \\
F_i + \ga_i  \theta_q(X,\tau)(F_i) + \si_i t_i^{-1} & \text{otherwise}. \end{cases}
}
The \emph{quantum pseudo-fixed-point subalgebra} corresponding to $(X,\tau,\bm \ga,\bm \si)$ is the subalgebra
of $U_q\mfg$ given by 
\begin{flalign}
&& U_q\mfk = U_q\mfk_{\bm \ga,\bm \si}(X,\tau) = \langle U_q\mfg_X, U_q\mfh^\theta, \{ B_i \, | \, i\not\in X \} \rangle = \langle U_q\mfn^+_X, U_q\mfh^\theta, \{ B_i \, | \, i\in \indI \} \rangle.
&& \hfill\defnend
\end{flalign}
\end{definition}

The main result of the present paper is that, up to completion, $U_q\mfk$ is equipped with a universal K-matrix. The following Lemma compares our expression of the generators with that used in \cite{Ko14} and \cite{BK19}.

\begin{lemma} \label{lem:Bi:theta}
Let $i\not\in X$.
Then
\eq{
\label{Bi:theta} B_i = F_i + G_{\theta,-\rho_X}(\al_i) \ga_i \wt \theta_q(X,\tau)(F_i t_i) t_i^{-1} + \si_i t_i^{-1}.
}
\end{lemma}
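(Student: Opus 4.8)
The statement to prove is Lemma \ref{lem:Bi:theta}, the equality
\[
B_i = F_i + G_{\theta,-\rho_X}(\al_i)\,\ga_i\,\wt\theta_q(X,\tau)(F_i t_i)\,t_i^{-1} + \si_i t_i^{-1}
\]
for $i \in \indI \backslash X$. Comparing with the definition \eqref{Bi:def}, $B_i = F_i + \ga_i\,\theta_q(F_i) + \si_i t_i^{-1}$, it is clear that the $F_i$-term and the $\si_i t_i^{-1}$-term match verbatim, so the entire content is the single identity
\[
\theta_q(X,\tau)(F_i) = G_{\theta,-\rho_X}(\al_i)\,\wt\theta_q(X,\tau)(F_i t_i)\,t_i^{-1}.
\]
The plan is to derive this from the relation \eqref{hattheta:relation} between $\wt\theta_q$ and $\theta_q$, namely $\wt\theta_q = \Ad(G_{\id-\theta,\rho_X-\rho}) \circ \theta_q$, or equivalently $\theta_q = \Ad(G_{\theta-\id,\rho_X-\rho}) \circ \wt\theta_q$.

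First I would apply the second form of \eqref{hattheta:relation} to $F_i$:
\[
\theta_q(F_i) = \Ad(G_{\theta-\id,\rho_X-\rho})\big(\wt\theta_q(F_i)\big).
\]
To evaluate the right-hand side I need the weight of $\wt\theta_q(F_i)$: since $F_i \in (U_q\mfg)_{-\al_i}$ and $\wt\theta_q$ shifts weights by $\theta$ (by \eqref{thetaq:rootspace}), we have $\wt\theta_q(F_i) \in (U_q\mfg)_{\theta(-\al_i)} = (U_q\mfg)_{-\theta(\al_i)}$. Then I would use the explicit conjugation formula \eqref{G:Ad} for $\Ad(G_{\ze,\la})$ on a root vector — here with $\ze = \theta-\id$ (self-adjoint, since $\theta$ is self-adjoint on $\Plate$ and preserves $\Qlat$) and $\la = \rho_X - \rho$ — to write
\[
\theta_q(F_i) = G_{\theta-\id,\rho_X-\rho}(-\theta(\al_i))\,\wt\theta_q(F_i)\,t_{(\theta-\id)(-\theta(\al_i))}.
\]
The exponent simplifies using $\theta^2|_{\Qlat} = \id$ (a consequence of \eqref{theta:relation}/\eqref{theta:squared} on the level of $\mfh^*$, where $\theta = -w_X\tau$ is genuinely involutive), giving $(\theta-\id)(-\theta(\al_i)) = -\theta^2(\al_i) + \theta(\al_i) = -\al_i + \theta(\al_i)$, so the Cartan tail is $t_{\theta(\al_i)} t_i^{-1}$.

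On the other side, I would expand $\wt\theta_q(F_i t_i)\,t_i^{-1}$. Since $t_i$ is group-like and $\wt\theta_q$ sends $t_i = t_{\eps_i h_i}$ to $t_{\theta(\eps_i h_i)}$, hence on the $\Qlat$-part acts as $t_{\al_i} \mapsto t_{\theta(\al_i)}$, we get $\wt\theta_q(F_i t_i) = \wt\theta_q(F_i)\, t_{\theta(\al_i)}$, so $\wt\theta_q(F_i t_i)\,t_i^{-1} = \wt\theta_q(F_i)\,t_{\theta(\al_i)} t_i^{-1}$, matching the Cartan tail obtained above. Thus it remains only to check the scalar identity $G_{\theta-\id,\rho_X-\rho}(-\theta(\al_i)) = G_{\theta,-\rho_X}(\al_i)$, which I would verify directly from the definition \eqref{G:def}: writing out $G_{\theta-\id,\rho_X-\rho}(-\theta(\al_i)) = q^{((\theta-\id)(-\theta(\al_i)),-\theta(\al_i))/2 + (\rho_X-\rho,-\theta(\al_i))}$, using self-adjointness and $W$-invariance of the form together with $\theta^2 = \id$ on $\Qlat$, and comparing with $G_{\theta,-\rho_X}(\al_i) = q^{(\theta(\al_i),\al_i)/2 - (\rho_X,\al_i)}$. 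The pairing term reduces via $(\theta(\al_i),\theta(\al_i)) = (\al_i,\al_i)$ and $(\theta^2(\al_i),\theta(\al_i)) = (\al_i,\theta(\al_i))$, and the linear term reduces using $(\rho,\theta(\al_i)) = (\theta(\rho),\al_i)$ together with the fact that $\theta = -w_X\tau$ sends $\rho$ in a controlled way — here I would invoke \eqref{wX:rhominusrhoX}, $w_X(\rho - \rho_X) = \rho - \rho_X$, and $\tau(\rho)=\rho$ (as $\rho(h_i)=1$ for all $i$ and $\tau$ is a diagram automorphism), to get $\theta(\rho) = -w_X\tau(\rho) = -w_X(\rho) = -\rho - w_X(-\rho_X) + \dots$; more carefully, $\theta(\rho-\rho_X) = -w_X\tau(\rho-\rho_X) = -(\rho-\rho_X)$ since $\tau$ permutes $\Pi_X$ hence fixes $\rho_X$, so $\theta(\rho) = \theta(\rho_X) - (\rho-\rho_X) = \theta(\rho_X) - \rho + \rho_X$, and $\theta$ fixes $\mfh_X^*$ pointwise by \eqref{theta:fixed} so $\theta(\rho_X) = \rho_X$, giving $\theta(\rho) = 2\rho_X - \rho$. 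The bookkeeping of these exponents is the only real work; the main obstacle is getting the signs and the $\rho$-versus-$\rho_X$ shifts exactly right, particularly keeping track of where self-adjointness of $\theta$ on $\Plate$ (which requires $\tau$-compatibility, as assumed) is used. Once the scalars agree, the lemma follows.
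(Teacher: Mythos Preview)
Your strategy is sound and close in spirit to the paper's proof, but there is a sign slip when you invert \eqref{hattheta:relation}. Since $G_{\ze_1,\la_1}G_{\ze_2,\la_2}=G_{\ze_1+\ze_2,\la_1+\la_2}$, the inverse of $G_{\id-\theta,\rho_X-\rho}$ is $G_{\theta-\id,\rho-\rho_X}$, not $G_{\theta-\id,\rho_X-\rho}$. With the correct subscript the scalar identity you need becomes
\[
G_{\theta-\id,\rho-\rho_X}(-\theta(\al_i)) = G_{\theta,-\rho_X}(\al_i),
\]
and this does hold: the quadratic part contributes $\tfrac12\big((\al_i,\theta(\al_i))-(\al_i,\al_i)\big)$, the linear part contributes $(\rho-\rho_X,-\theta(\al_i))=(\rho,\al_i)-(\rho_X,\al_i)$ using your observation $\theta(\rho)=2\rho_X-\rho$ and $(\rho_X,\theta(\al_i))=(\rho_X,\al_i)$, and then $(\rho,\al_i)=(\al_i,\al_i)/2$ cancels the unwanted $-(\al_i,\al_i)/2$. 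With your stated subscript the linear part comes out with the wrong sign and the cancellation fails.

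For comparison, the paper does not pass through \eqref{hattheta:relation} at all: it simply unpacks the definition $\theta_q = \Ad(G_{\theta,\rho_X}\wt T_X)\circ\om\circ\tau$ directly on $F_i$, obtaining $\theta_q(F_i)=-\Ad(G_{\theta,\rho_X}\wt T_X)(E_{\tau(i)})$, then applies Lemma~\ref{lem:G}(iii) to the weight vector $\Ad(\wt T_X)(E_{\tau(i)})\in(U_q\mfg)_{w_X(\al_{\tau(i)})}=(U_q\mfg)_{-\theta(\al_i)}$ and recognizes the result as $G_{\theta,-\rho_X}(\al_i)\,\wt\theta_q(F_it_i)\,t_i^{-1}$. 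This is a bit shorter because it avoids the extra $\rho$-bookkeeping; your route works too but, as you anticipated, the signs around $\rho$ versus $\rho_X$ are exactly where care is needed.
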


\begin{proof}
In the proof we write $\theta$ instead of $\theta(X,\tau)$ for simplicity, and similarly for $\wt \theta_q$ and $ \theta_q$.
We have
\begin{align*}
\theta_q(F_i) &= -\Ad(G_{\theta,\rho_X} \wt T_X)(E_{\tau(i)}) \\
&= -G_{\theta,\rho_X}(w_X(\al_{\tau(i)})) \Ad(\wt T_X)(E_{\tau(i)}) t_{\theta(w_X(\al_{\tau(i)}))} \\
&= G_{\theta,-\rho_X}(\al_i) \wt \theta_q(F_i t_i) t_i^{-1}
\end{align*}
by virtue of Lemma \ref{lem:G} (iii) and the definitions \eqref{thetaq:def}.
Now \eqref{Bi:theta} follows as an immediate consequence.
\end{proof}

\begin{remark}\label{rmk:no-sign}
Note that the map $\theta(X,\tau)$ as defined in \cite{Ko14,BK19}  
differs from ours by having an extra factor $\Ad(s)$. This guarantees that $\theta(X,\tau)$ is an involutive automorphism of $\mfg$. Nevertheless, its quantum analogue $ \theta_q(X,\tau)$ 
is not an involutive automorphisms of $U_q\mfg$. 
Moreover, it follows from \eqref{Bi:theta} that, if $(X,\tau)$ is a Satake diagram, our expression 
for $B_i$ corresponds precisely to that used in \cite[(5.8)]{BK19} upon identifying, for any $i\in I\backslash X$,
\eq{ \label{gamma:relationwithc}
\ga_i = s(\al_{\tau(i)}) G_{-\theta(X,\tau),\rho_X}(\al_i) c_i 
}
where $s \in \Hom_\grp(\Qlat,\{1,-1\})$ is constrained by \eqref{s:condition}. \hfill \rmkend
\end{remark}

%%%%%%%%%%%%%%%%%%%%%%%%%%%

\subsection{Structure of $U_q\mfk$} \label{sec:Uqk:structure}

By \cite[Prop.~5.2]{Ko14}, $U_q\mfk$ is a right coideal of $U_q\mfg$; the proof of this statement requires $\tau|_X = \oi_X$ but not the condition that $(X,\tau)$ has no unsuitable nodes, or anything stronger.
More precisely, by \cite[Eq.~(5.5)]{Ko14}, we have
\eq{ \label{Del:Bi}
\Del(B_i) - B_i \ot t_i^{-1} \in U_q\mfg_X U_q\mfh^\theta \ot U_q\mfg.
}
Furthermore, it follows from \cite[Prop.~6.2]{Ko14} that
\eq{ \label{intersection}
U_q\mfk \cap U_q\mfh = U_q\mfh^\theta,
} 
providing a quantum analogue of the identity $\mfk \cap \mfh = \mfh^\theta$.
If $\cork(A) \leqslant 1$ (in particular if $A$ is of affine type), then $U_q\mfh^\theta \subseteq U_q\mfh'$. Combining with \eqref{intersection}, we obtain $U_q\mfk \subseteq U_q\mfg'$.
These statements follow from the analysis of the expressions ${\sf Ser}_{ij}(B_i,B_j)$ in \cite[Sec.~5.3]{Ko14} for $(\bm \ga,\bm \si) \in \Ga_q \times \Si_q$.
This analysis remains valid for any generalized Satake diagram. 

%%%%%%%%%%%%%%%%%%%%%%%%%%%%%%%%%%%%%%%%%%%%%%%%%%%%%%%%%%%%%%%%%

\section{The quasi-K-matrix} \label{s:quasiK} %for unrestricted parameters

In this section, we review the construction of the so-called \emph{quasi}-K-matrix, which is the essential ingredient in the construction of universal K-matrices for quantum groups. 
We present a more general and simpler construction as we explain below.\\

In \cite{ES18} Ehrig and Stroppel studied the categorification of certain coideal subalgebras of $U_q\mfgl_N$ and the associated skew Howe duality.
For the same quantum symmetric pairs, Bao and Wang in \cite{BW18} developed a coideal version of Lusztig's theory of canonical bases.
Central to both papers is the notion the {\em internal} bar involution,\ie an algebra automorphism of $U_q\mfk$ which is a suitable analogue of Lusztig's bar involution of $U_q\mfg$. 
Note indeed that the latter does not preserve $U_q\mfk$ and therefore the two involutions
do not coincide on $U_q\mfk$.
In \cite{BW18} this leads to the first definition of the quasi-K-matrix, as a canonical $U_q\mfk$-intertwiner between the restriction of Lusztig's bar involution  and the internal bar involution. 
Balagovi\'{c} and Kolb generalized the existence of the internal bar involution and the associated quasi-K-matrix in \cite{BK15,BK19} to all quantum symmetric pairs of finite type considered by G. Letzter \cite{Le99,Le02,Le03} and to a large class of quantum symmetric Kac-Moody pairs.
Finally, and more importantly for us, it was shown in \cite{BK19} that the quasi-K-matrix is a key factor of the universal K-matrix of quantum symmetric pairs of finite type.\\

In the above works, the existence of the internal bar involution relies on certain constraints on the parameters $\bm \ga$ and $\bm \si$ (\eg \cite[(5.16)-(5.17)]{BK19}). The latter apply also to the quasi-K-matrix, since its construction requires the internal bar involution.
In this section we provide a generalization of the latter result, valid for quantum symmetric pairs of Kac-Moody type, which does not rely on the existence of the internal bar involution and does not require special constraints on the parameters (Theorem~\ref{thm:sum-quasi-k}). 
In fact, as shown by Kolb in \cite{Ko21}, this in turn can be used to \emph{define} the internal bar involution for $U_q\mfk$.

%%%%%%%%%%%%%%%%%%%%%%%%%%%

\subsection{Locally inner automorphisms} 
We recalled in Section \ref{s:QG} that the construction of the quasi-R-matrix due to Lusztig essentially amounts to produce a solution to the following problem: find an element $\mathsf{X}$ in $U_q\mfg^{\ot 2}$ (or rather in a suitable completion) such that the subalgebras $\wb{\Del\big(\wb{U_q\mfg}\big)},\, \Del(U_q\mfg)\subset U_q\mfg^{\ot 2}$ are pointwise related by conjugation by $\mathsf{X}$.
Note that the two subalgebras do not coincide.
Therefore, if the solution $\mathsf{X}$ exists, it is certainly non-trivial. 
This is indeed the defining intertwining equation satisfied by $\mathsf{X}=\wtR$ \eqref{tildeR:intw}.\\

This suggests the notion of {\em locally inner automorphisms}, that is, global automorphisms of algebras, which become inner (or rather topologically inner) when restricted to a distinguished subalgebra. We shall consider the following situation.
Let $A \subseteq B \subseteq C$ be a tower of (unital associative) algebras over a field $\F$.
Let $A_0$ be a given generating set for $A$. 
Suppose we have a function $f:A_0 \to B$ and an element $c \in C^\times$ such that $f(a_0) = \Ad(c)(a_0)$ for all $a_0 \in A_0$.
We extend $f$ to an algebra embedding $A \to C$ by setting $f(a) = \Ad(c)(a)$.
Since every element of $A$ can be written as a linear combination of products of the elements of $A_0$, it follows that $f$ maps $A$ into $B$.
Finally, restricting the codomain of $f$ to the subalgebra $f(A)$, we obtain an algebra isomorphism between $A$ and $f(A)$. 
Furthermore the set $f(A_0)=\{\Ad(c)(a_0) \, | \, a_0 \in A_0\}$ is clearly a generating set of $f(A)$.

%%%%%%%%%%%%%%%%%%%%%%%%%%%

\subsection{The case of the bar involution}\label{ss:loc-inner-bar}
Proving that the bar involution is a locally inner automorphism is the problem at 
the origin of the quasi-K-matrix. We shall consider the situation described above with $A = U_q\mfk_{\bm \ga,\bm \si}(X,\tau)$, $B = U_q\mfg$ and $C = (U_q\mfg)^{\mc O^+}$ for $(X,\tau) \in \GSat(A)$ and $(\bm \ga , \bm \si) \in \Ga_q \times \Si_q$.
The set $A_0$ will simply be the canonical set of generators $U_q\mfg_X U_q\mfh^\theta \cup \{ B_{i;\ga_i,\si_i} \, | \, i \not\in X \}$ of $U_q\mfk_{\bm \ga,\bm \si}$.
We choose $f|_{A_0}$ as follows:
\eq{
\begin{aligned}
f|_{U_q\mfg_X U_q\mfh^\theta} = \id_{U_q\mfg_X U_q\mfh^\theta}, 
\qq\mbox{and}\qq
f(B_{i;\ga_i,\si_i}) = \wb{B_{i;\ga'_i,\si'_i}} \qq (i \not\in X)
\end{aligned}
}
where in addition to the parameter tuples $\bm \ga = (\ga_i)_{i \in \indI} \in \Ga_q$, $\bm \si = (\si_i)_{i \in \indI} \in \Si_q$, we have chosen alternative tuples $\bm \ga' = (\ga'_i)_{i \in \indI} \in \Ga_q$, $\bm \si' = (\si'_i)_{i \in \indI} \in \Si_q$, to be specified later, see \eqref{prime:def}.
The element $c$ will be given by an element
\[
\wtKM = \wtKM_{\bm \ga,\bm \si}(X,\tau) \in (U_q\mfn^+)^{\mc O^+} \subset (U_q\mfg)^{\mc O^+}
\]
which we will construct in the following.

The condition $\Ad(\wtKM)(u) = f(u)$ for all $u \in U_q\mfg_X U_q\mfh^\theta \cup \{ B_{i;\ga_i,\si_i} \, | \, i \not\in X \}$ is equivalent to requiring
\begin{align}
	\label{tildek:Bi} \wtKM \, B_{i;\ga_i,\si_i} = \wb{B_{i;\ga'_i,\si'_i}}\, \wtKM
	\qq\mbox{and}\qq \wtKM \, u = u \, \wtKM 
\end{align}
for $i\in\indI \backslash X$ and $u \in U_q\mfg_XU_q\mfh^\theta$.
By the above discussion,  
we obtain that $f(U_q\mfk_{\bm \ga,\bm \si})$ is an algebra isomorphic to $U_q\mfk_{\bm \ga,\bm \si}$, generated by $U_q\mfg_XU_q\mfh^\theta$ and $\{ \wb{B_{i;\ga'_i,\si'_i}} \, | \, i\not\in X\}$.
Since the bar involution is an algebra automorphism of $U_q\mfg$ preserving $U_q\mfg_X U_q\mfh^\theta$, we have $f(U_q\mfk_{\bm \ga,\bm \si}) = \wb{U_q\mfk_{\bm \ga',\bm \si'}}$. 

\begin{remark}
Similarly to the case of the quasi-R-matrix, we shall see in the following sections that $f(U_q\mfk) = \Ad(\wtKM)(U_q\mfk)$ and $U_q\mfk$ are different subalgebras of $U_q\mfg$, for generic values of $(\bm \ga,\bm \si)$.
Define the \emph{internal} bar involution of $U_q\mfk_{\bm \ga,\bm \si}$ to be the composition
\eq{ \label{barB:def}
\wb{\phantom{i} \cdot \phantom{i}}^B \coloneqq \wb{\phantom{i} \cdot \phantom{i}} \circ f.
}
On the subbialgebra $U_q\mfg_X U_q\mfh^\theta$ this coincides with the usual bar involution (note that this map preserves $U_q\mfg_X U_q\mfh^\theta$).
The condition $(\bm \ga',\bm \si') = (\bm \ga,\bm \si)$ imposed in \cite{BK15,BK19} implies that the internal bar involution fixes $B_{i;\ga_i,\si_i}$ for all $i \not\in X$.
Note that {\em ibid.} it is furthermore assumed that $\wb{\; \cdot \;}^B$ preserves $U_q\mfk_{\bm \ga,\bm \si}$.
Detailed information on the presentation of $U_q\mfk_{\bm \ga,\bm \si}$, in particular the quantum Serre relations satisfied by the generators $B_i$, is then necessary to deduce that $\wb{\; \cdot \;}^B$ is indeed an algebra automorphism of $U_q\mfk_{\bm \ga,\bm \si}$, cf.\ \cite[Thm.~5.6 (1)]{BK19}.\\

Our approach allows instead to completely avoid the use of the internal bar involution, 
and hence does not require detailed results on the presentation of $U_q\mfk_{\bm \ga,\bm \si}$.
Indeed, we show below that the proofs in \cite[Sections 6 and 9.2]{BK19} are independent of the condition $(\bm \ga',\bm \si') = (\bm \ga,\bm \si)$. Thus, we obtain a quasi-K-matrix in a more general setting. 
In \cite{Ko21} this observation is further exploited to prove the existence {\em a fortiori} of the internal bar involution in the case $(\bm \ga',\bm \si') = (\bm \ga,\bm \si)$.
\hfill\rmkend
\end{remark}

%%%%%%%%%%%%%%%%%%%%%%%%%%%

\subsection{The involution on the set of parameters}

Define a map $'$ on $\F^{I}$ via
\eq{ \label{prime:def}
(\bm x')_i = x'_i \coloneqq (-1)^{\al_i(2 \rho^\vee_X)} \wb{x_{\tau(i)}} 
}
for $\bm x \in \F^{I}$ and $i \in \indI$.

\begin{lemma}
The map $'$ defined by \eqref{prime:def} is an involution which preserves $\Ga_q$ and $\Si_q$.
Moreover, for the latter, it maps $\bm \si$ to $\wb{\bm \si}$.
\end{lemma}

\begin{proof}
Note that $'$ restricts to $(\F^\times)^{I \backslash X}$.
From $\tau \in \Aut_X(A)$ it follows that $(\bm \ga'')_i = \ga_i$ and that $\ga'_i = \ga'_{\tau(i)}$ if and only if $\ga_i = \ga_{\tau(i)}$, for all $i \not\in X$, which proves the claim for $\Ga_q$. 
The claim for $\Si_q$ follows immediately from the fact that $\si_i=0$ if $\tau(i) \ne i$ or $\al_i(\rho^\vee_X) \ne 0$.
\end{proof} 

%%%%%%%%%%%%%%%%%%%%%%%%%%%

\subsection{Parameters as  elements in $(U_q\mfh)^{\mc O^+_\int}$} \label{sec:tuples:Uqhhat}
By a mild abuse of notation, given $\bm x \in (\F^\times)^I$, we shall denote by the same symbol  the corresponding character of the root lattice $\bm x \in \Hom_\grp(\Qlat,\F^\times)$ given by
$\bm x(\al_i) = x_i$ for $i \in \indI$.
By further abuse of notation, by the same symbol we shall denote an arbitrary extension to a group homomorphism of $\Plate$.
Note that such extensions exist since $\Qlat \subset \Plate$ is an embedding of abelian groups and $\F^\times$ is a divisible abelian group.
Finally, we will denote by $\bm x$ also the corresponding element of $(U_q\mfh)^{\mc O^+_\int}$
defined as in Section~\ref{sec:Uqhhat}. In the following, we shall consider the tuples
$\bm \ga'$, $\bm \ga$, and $\bm \ga^{-1} = (\ga_i^{-1})_{i \in \indI}$ as elements in 
$(U_q\mfh)^{\mc O^+_\int}$.

%%%%%%%%%%%%%%%%%%%%%%%%%%%

\subsection{The quasi-K-matrix}
We state the main result of this section.
It is convenient to rewrite the system \eqref{tildek:Bi}. 
In order to be able to apply Lusztig's theory of skew derivations directly, we write 
\eq{
\wtKM_{\bm \ga,\bm \si} = \wb{\mfX_{\bm \ga,\bm \si}}
}
for some $\mfX = \mfX_{\bm \ga,\bm \si} \in (U_q\mfn^+)^{\mc O^+}$. 
We note that the system \eqref{tildek:Bi} is equivalent to
\begin{align}
	\label{quasiK:Bi} \mfX \, \wb{B_{i;\ga_i,\si_i}} = B_{i;\ga'_i,\si'_i}\, \mfX \qq\mbox{and}\qq
	\mfX \, u = u \, \mfX 
\end{align}
for $i \not\in X$ and $u \in U_q\mfg_XU_q\mfh^\theta$
(note that the bar involution preserves $U_q\mfg_XU_q\mfh^\theta$).\\

The rest of the section is devoted to the construction of the quasi-K-matrix $\mf X$ based on its intertwining properties and the computation of its coproduct. 

\begin{theorem}\label{thm:sum-quasi-k}
	For all $(\bm \ga,\bm \si) \in \Ga_q \times \Si_q$ we have the following two results:
	\begin{enumerate}\itemsep0.25cm
		\item
There is a unique operator $\mfX = \mfX_{\bm \ga,\bm \si}\in(U_q\mfn^+)^{\mc O^+}$ of the form
\[\mfX = \sum_{\la \in (\Qlat^+)^{-\theta}} \mfX_{\bm \ga,\bm \si;\la}\] 
such that $\mfX_{\bm \ga,\bm \si;0} = 1$, $\mfX_{\bm \ga,\bm \si;\la} \in (U_q\mfn^+)_\la$ and the system \eqref{quasiK:Bi} is satisfied.
		\item 
		The following coproduct identiy holds:
		\eqn{
			\Del(\mfX) = (\mfX \ot 1) \cdot (\Ad(\bm \ga') \circ \theta_q^{-1} \ot \id)(\Theta) \cdot \Ad(\ka_\id)(1 \ot \mfX) \cdot \Theta_X^{-1}.
		}
	\end{enumerate}
\end{theorem}

This is a generalization of analogue results from \cite{BK19}.
The proof of (i) (Theorem \ref{thm:quasiK:2}) is carried out in Section \ref{ss:intertwining-bis}-\ref{sec:generalsigma}. 
The proof of (ii) (Theorem \ref{thm:quasiK:Delta}) is carried out in Section \ref{ss:coprod-formula}-\ref{ss:coprod-formula-pf}.

\begin{remark}
We follow the same approach used in \cite[Sec.~6]{BK19}. The results for arbitrary values of $\bm \si \in \Sigma_q$ are obtained from the special case $\bm \si = \bm 0$ (Theorem \ref{thm:quasiK}).
This relies on the arguments given in \cite[Sec.~3.5]{DK19} and simplifies the computations significantly. \hfill\rmkend
\end{remark}

%%%%%%%%%%%%%%%%%%%%%%%%%%%

\subsection{The intertwining property}\label{ss:intertwining-bis}
The key ingredient to prove the existence of the intertwiner is the use of the so-called Lusztig 
skew derivations. 
To this end, we shall first find an equivalent formulation of the intertwining system. 

\begin{lemma}
	For all $i \not\in X$, 
	\begin{align}
	\label{AdTX:bar2} \wb{\Ad(T_{X,\tau})(E_{\tau(i)})} &= (-1)^{\al_i(2 \rho^\vee_X)} \Ad(T_{X,\tau}^{-1})(E_{\tau(i)}), \\
	\nonumber \Ad(t_i T_{X,\tau})(E_{\tau(i)}) &= q^{-(\theta(\al_i),\al_i)} \Ad(T_{X,\tau})(E_{\tau(i)}).
	\end{align}
	Moreover,
	\begin{align}
	\nonumber 
	\wb{B_{i;\ga_i,\si_i}} &= F_i - \big( \ze_i \, \Ad(\wt T_X^{-1})(E_{\tau(i)}) - \wb{\si_i} \big) t_i \\
	\label{Bi:prime} B_{i;\ga'_i,\si'_i} &= F_i - t_i^{-1} \big( \ze_{\tau(i)} \, \Ad(\wt T_X)(E_{\tau(i)}) - \wb{\si_i} \big)
	\end{align}
	where
	\eq{ \label{phi:def} 
		\ze_i \coloneqq \begin{cases} (-1)^{\al_i(2\rho^\vee_X)} G_{-\theta,-\rho_X}(\al_i) \wb{\ga_i} = G_{-\theta,-\rho_X}(\al_i) \ga'_{\tau(i)} & \text{if } i \not\in X \\ 0 & \text{if } i \in X. \end{cases}
	}
\end{lemma}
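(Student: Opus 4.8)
The proof of this Lemma is entirely computational: each of the four displayed identities follows by unwinding the definitions of $T_{X,\tau}$, $\wt T_X$, $\theta_q$ and $B_i$ together with the bar-involution formulas already established. I would organize it into four short steps, one per identity.

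\textbf{Step 1: The two identities for $\Ad(T_{X,\tau})(E_{\tau(i)})$.} For the bar-involution identity \eqref{AdTX:bar2}, I would apply \eqref{hatT:bar} with $u = E_{\tau(i)} \in (U_q\mfg)_{\al_{\tau(i)}}$, giving $\wb{\Ad(T_{X,\tau})(E_{\tau(i)})} = (-1)^{\al_{\tau(i)}(2\rho^\vee_X)} \Ad(T_{X,\tau}^{-1})(E_{\tau(i)})$; since $\tau \in \Aut_X(A)$ and $\rho^\vee_X$ is $\tau$-stable (as $\tau|_X = \oi_X$), we have $\al_{\tau(i)}(2\rho^\vee_X) = \al_i(2\rho^\vee_X)$, yielding the stated form. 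For the second identity, note $T_{X,\tau} \in (U_q\mfg_X)^{\mc W_\int}$ has weight-zero Cartan commutation with $t_i$ governed by \eqref{hatT:rootspace}: conjugating $t_i$ past $T_{X,\tau}$ and using that $\Ad(T_{X,\tau})(E_{\tau(i)}) \in U_q\mfg_{w_X(\al_{\tau(i)})}$ with $w_X(\al_{\tau(i)}) = w_X \tau(\al_i) = -\theta(\al_i)$, we pick up the scalar $q^{-(\theta(\al_i),\al_i)}$, as claimed. Here I am using $\Ad(t_i T_{X,\tau})(x) = t_i \Ad(T_{X,\tau})(x) t_i^{-1}$ and \eqref{Uqg:rels:Cartan2}.

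\textbf{Step 2: The formula for $\wb{B_{i;\ga_i,\si_i}}$.} Starting from Lemma \ref{lem:Bi:theta}, $B_i = F_i + G_{\theta,-\rho_X}(\al_i)\ga_i \wt\theta_q(F_i t_i) t_i^{-1} + \si_i t_i^{-1}$, and rewriting via $\wt\theta_q = \Ad(\wt T_X)\circ\wt\om\circ\tau$ so that $\wt\theta_q(F_i t_i) = -\Ad(\wt T_X)(E_{\tau(i)}) \cdot (\text{Cartan factor from } \wt\om)$, one expresses $B_i$ as $F_i - (\text{scalar})\,\Ad(\wt T_X^{\pm 1})(E_{\tau(i)})\,t_i + \si_i t_i^{-1}$. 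Applying the bar involution (which fixes $F_i$, sends $t_i \mapsto t_i^{-1}$ and conjugates scalars) and using \eqref{TX:bar} to convert $\wb{\Ad(\wt T_X)(E_{\tau(i)})}$ into $\Ad(\wt T_X^{-1})(E_{\tau(i)})$ with an explicit sign and $q$-power, I would collect the scalar into the quantity $\ze_i$ defined in \eqref{phi:def}; the two expressions for $\ze_i$ there are reconciled using $\ga'_{\tau(i)} = (-1)^{\al_i(2\rho^\vee_X)}\wb{\ga_i}$ from \eqref{prime:def} (noting $\al_{\tau(i)}(2\rho^\vee_X) = \al_i(2\rho^\vee_X)$ again). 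This produces \eqref{Bi:prime}, first line. The main bookkeeping obstacle is keeping the Cartan/scaling factors $G_{\theta,\pm\rho_X}(\al_i)$ straight when passing between the $\wt\theta_q$ and $\theta_q$ normalizations; I would do this carefully using Lemma \ref{lem:G}(iii) exactly as in the proof of Lemma \ref{lem:Bi:theta}.

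\textbf{Step 3: The formula for $B_{i;\ga'_i,\si'_i}$.} This is the same computation as Step 2 but with $(\bm\ga,\bm\si)$ replaced by $(\bm\ga',\bm\si')$, and then simplified. Substituting $\ga'_i$ and $\si'_i = \wb{\si_i}$ (valid since $\si$ is supported on $i$ with $\tau(i)=i$, $\al_i(\rho^\vee_X)=0$, so $(-1)^{\al_i(2\rho^\vee_X)}=1$) and using the relation $\ze_{\tau(i)} = G_{-\theta,-\rho_X}(\al_{\tau(i)})\ga'_i$ together with the first identity of Step 1 to pull $\Ad(\wt T_X)(E_{\tau(i)})$ through $t_i^{-1}$, one arrives at \eqref{Bi:prime}, second line. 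I expect this step to be essentially mechanical once Step 2 is in place.

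\textbf{Step 4: Assembly.} Finally I would note that each of the four identities has now been derived, so the proof is complete. Throughout, the only genuinely subtle point — and the one I would flag as the main obstacle — is ensuring the sign $(-1)^{\al_i(2\rho^\vee_X)}$ and the Cartan scalar $G_{-\theta,-\rho_X}(\al_i)$ appear with the correct exponents; all of these are controlled by \eqref{hatT:bar}, \eqref{TX:bar}, \eqref{prime:def} and the $\tau$-invariance of $\rho^\vee_X$, so it is a matter of careful (but routine) tracking rather than a conceptual difficulty.
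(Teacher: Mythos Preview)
Your proposal is correct and takes essentially the same approach as the paper: the paper's proof consists of only two sentences (\eqref{AdTX:bar2} follows from \eqref{hatT:bar} and the defining relations of $U_q\mfg$; then the explicit formula \eqref{Bi:def} implies \eqref{Bi:prime}), and your Steps 1--3 are simply a careful unpacking of exactly those computations. The only minor difference is that you route Step 2 through the $\wt\theta_q$-form of $B_i$ from Lemma \ref{lem:Bi:theta}, whereas the paper starts directly from \eqref{Bi:def} in terms of $\theta_q$; these are equivalent starting points, and your sign/scalar bookkeeping via \eqref{hatT:bar}, \eqref{TX:bar}, \eqref{prime:def} and the $\tau$-invariance of $\rho^\vee_X$ is exactly what is needed.
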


\begin{proof}
	It is enough to observe that \eqref{AdTX:bar2} follows from 
	\eqref{hatT:bar} and the defining relations of $U_q\mfg$.
	Then, the explicit formula \eqref{Bi:def} implies \eqref{Bi:prime}.
\end{proof}

It follows that \eqref{quasiK:Bi} together with the condition $\mfX F_i = F_i \mfX$ 
($i \in X$) is equivalent to the condition
	\eq{ \label{quasiK:Bi:2}
		\big[ \mfX, F_i \big] = \mfX \big( \ze_i \, \Ad(\wt T_X^{-1})(E_{\tau(i)}) - \wb{\si_i} \big) t_i - t_i^{-1} \big( \ze_{\tau(i)} \, \Ad(\wt T_X)(E_{\tau(i)}) - \wb{\si_i} \big) \mfX.
	}
for all $i \in I$.

%%%%%%%%%%%%%%%%%%%%%%%%%%%

\subsection{Skew derivations}

We recall some basic facts from \cite{Lus94} and \cite{Jan96}.
Let $i \in \indI$ and note that $\Ad(t_i)$ is an algebra automorphism of $U_q\mfn^+$.
Following \cite[1.2.13]{Lus94}, let $D_i^{(\ell)}, D_i^{(r)}\in \End_\F( U_q\mfn^+)$  be the unique linear maps (denoted $r_i$ and ${}_ir$ in \emph{ibid.}) such that $D_i^{(\ell)}(E_j) = \del_{ij}=D_i^{(r)}(E_j)$ for any $j \in \indI$ and
\eq{ \label{Di:def}
D_i^{(r)}(uu') = D_i^{(r)}(u) \Ad(t_i)(u') + u D_i^{(r)}(u'), \qu D_i^{(\ell)}(uu') = D_i^{(\ell)}(u)u' + \Ad(t_i)(u)D_i^{(\ell)}(u')
}
for any $u,u' \in U_q\mfn^+$.
They satisfy $D_i^{(\ell)}((U_q\mfn^+)_\la) \subseteq (U_q\mfn^+)_{\la - \al_i}\supseteq D_i^{(r)}((U_q\mfn^+)_\la)$ for all $\la \in \Qlat^+$ and
\eq{ 
\label{Di:op} \op \circ D_i^{(r)} = D_i^{(\ell)} \circ \op,
}
where $\op$ is the unique algebra antiautomorphism of $U_q\mfg$ which fixes each $E_i$ and $F_i$ ($i \in \indI$) and inverts each $t_h$ ($h \in \Qlat^\vee_\ext$). 
Recall that the following  properties hold.
\begin{enumerate}\itemsep0.25cm
	\item By \cite[Prop.~3.1.6]{Lus94},
	\eq{ \label{Di:commutatorFi}
		[u, F_i ] = \frac{D_i^{(r)}(u)t_i - t_i^{-1} D_i^{(\ell)}(u)}{q_i-q_i^{-1}}.
	}
	for any $u \in U_q\mfn^+$.
	\item By \cite[Lem.~1.2.15 (a)]{Lus94},
	\eq{ \label{Di:zero}
		u = 0 \qq \Leftrightarrow \qq \forall i \in \indI \; D_i^{(r)}(u) = 0 \qq \Leftrightarrow \qq \forall i \in \indI \; D_i^{(\ell)}(u) = 0 
	}
	for any $u \in U_q\mfn^+_\la$ with $\la \in Q^+ \backslash \{0\}$.
	\item By \cite[Lem.~10.1]{Jan96}, 
	\eq{ \label{DiDj:commute}
		D_i^{(r)} \circ D_j^{(\ell)} = D_j^{(\ell)} \circ D_i^{(r)}
	}
	for any $i,j\in\indI$.
	\item By \cite[1.2.13]{Lus94},
	 \eq{
	 	\label{Di:pairing} \langle F_i v,u \rangle = \frac{1}{q_i^{-1}-q_i} \langle v, D_i^{(\ell)}(u) \rangle
	 	\qq \mbox{and}\qq
	 	\langle v F_i,u \rangle = \frac{1}{q_i^{-1}-q_i} \langle v, D_i^{(r)}(u) \rangle
	 }
 	for any $u\in U_q\mfn^+$ and $v \in U_q\mfn^-$.
\end{enumerate}

Note that the maps $D_i^{(\ell)}, D_i^{(r)}$ naturally extend to $(U_q\mfn^+)^{\mc O^+}$ (roughly, the latter consists of formal series in $U_q\mfn^+$ converging on category ${\mc O}^+$ modules, cf.~Section~\ref{ss:quantum-cat-O}).

\subsection{The intertwining property in terms of skew derivations}
We shall use the skew derivations to provide an equivalent description of the system \eqref{quasiK:Bi}.
By \eqref{Di:commutatorFi} and the linear independence of $t_i$ and $t_i^{-1}$ over $(U_q\mfn^+)^{\mc O^+}$, 
$\mfX$ is a solution of \eqref{quasiK:Bi} if and only if it is a solution of the following system in $(U_q\mfn^+)^{\mc O^+}$:
\begin{align}
\label{quasiK:Bi:3a} D_i^{(r)}(\mfX) &= (q_i-q_i^{-1}) \mfX \big( \ze_i  \, \Ad(\wt T_X^{-1})(E_{\tau(i)}) - \wb{\si_i} \big), \\
\label{quasiK:Bi:3b} D_i^{(\ell)}(\mfX) & = (q_i-q_i^{-1}) \big( \ze_{\tau(i)} \, \Ad(\wt T_X)(E_{\tau(i)}) - \wb{\si_i} \big) \mfX, 
\end{align}
for any $i \in \indI$.
For $\la \in \Qlat$, let $\mfX_\la$ be the projection of $\mfX$ on the root space $(U_q\mfn^+)_\la$ with respect to the root space decomposition $U_q\mfn^+ = \bigoplus_{\la \in \Qlat} (U_q\mfn^+)_\la$.
Then, $\mfX_\la = 0$ if $\la \not\in\Qlat^+$ and we get the following result.

\begin{lemma}
Let $\mfX \in (U_q\mfn^+)^{\mc O^+}$ be an invertible element.
Then, $\mfX$ is a solution of \eqref{quasiK:Bi} if and only if
\begin{align}
\label{quasiK:Bi:cpts:a}
D_i^{(r)}(\mfX_\la) &= (q_i-q_i^{-1}) \big(  \ze_i \, \mfX_{\la-\al_i+\theta(\al_i)} \Ad(\wt T_X^{-1})(E_{\tau(i)}) - \wb{\si_i} \mfX_{\la-\al_i} \big), \\
\label{quasiK:Bi:cpts:b}
D_i^{(\ell)}(\mfX_\la) & = (q_i-q_i^{-1}) \big( \ze_{\tau(i)} \, \Ad(\wt T_X)(E_{\tau(i)}) \mfX_{\la-\al_i+\theta(\al_i)} - \wb{\si_i} \, \mfX_{\la-\al_i} \big)
\end{align}
for any $\la \in \Qlat^+$ and $i \in \indI$.
\end{lemma}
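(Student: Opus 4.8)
The plan is to translate the two operator identities \eqref{quasiK:Bi:3a}--\eqref{quasiK:Bi:3b} into their graded components and verify that the claimed formulas are exactly the projections onto the root spaces. Throughout one uses that the skew derivations $D_i^{(\ell,r)}$ lower the $\Qlat$-degree by $\al_i$, that $\Ad(\wt T_X^{\pm 1})$ shifts a root space $(U_q\mfn^+)_\mu$ to $(U_q\mfn^+)_{w_X(\mu)}$, and that $E_{\tau(i)}$ lies in $(U_q\mfn^+)_{\al_{\tau(i)}}$, so that $\Ad(\wt T_X^{-1})(E_{\tau(i)}) \in (U_q\mfn^+)_{w_X^{-1}(\al_{\tau(i)})}$ and $\Ad(\wt T_X)(E_{\tau(i)}) \in (U_q\mfn^+)_{w_X(\al_{\tau(i)})}$. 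The key identity is that $w_X(\al_{\tau(i)}) = -\oi_X(\al_{\tau(i)})$ would not be right since $\tau(i) \notin X$ in general; instead one observes directly from $\theta = -w_X \circ \tau$ that $w_X(\al_{\tau(i)}) = -\theta(\al_i)$, hence equivalently $-\al_i + \theta(\al_i) = -\al_i - w_X(\al_{\tau(i)})$, which is precisely the degree shift needed for the first term on the right-hand side to land in $(U_q\mfn^+)_{\la-\al_i}$.

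First I would fix $i \in \indI$ and $\la \in \Qlat^+$ and apply the projection $\pi_{\la-\al_i}: U_q\mfn^+ \to (U_q\mfn^+)_{\la-\al_i}$ to both sides of \eqref{quasiK:Bi:3a}. On the left, since $D_i^{(r)}$ decreases degree by $\al_i$ and is $\Qlat$-graded, one has $\pi_{\la-\al_i} D_i^{(r)}(\mfX) = D_i^{(r)}(\mfX_\la)$. On the right, the element $\mfX\,\ze_i\,\Ad(\wt T_X^{-1})(E_{\tau(i)})$ is a sum of homogeneous terms $\mfX_\mu \cdot \ze_i \cdot \Ad(\wt T_X^{-1})(E_{\tau(i)})$ of degree $\mu + w_X^{-1}(\al_{\tau(i)})$; the component of degree $\la - \al_i$ selects $\mu = \la - \al_i - w_X^{-1}(\al_{\tau(i)})$. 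Using $w_X^{-1} = w_X$ (as $w_X$ is an involution) and the relation $w_X(\al_{\tau(i)}) = -\theta(\al_i)$ recalled above, one gets $\mu = \la - \al_i + \theta(\al_i)$, which produces the first summand of \eqref{quasiK:Bi:cpts:a}. The term $-\wb{\si_i}\,\mfX$ contributes $-\wb{\si_i}\,\mfX_{\la-\al_i}$ in degree $\la - \al_i$ directly. This establishes \eqref{quasiK:Bi:cpts:a}; the computation for \eqref{quasiK:Bi:cpts:b} is entirely parallel, applying $\pi_{\la-\al_i}$ to \eqref{quasiK:Bi:3b} and using that $\Ad(\wt T_X)(E_{\tau(i)})$ has degree $w_X(\al_{\tau(i)}) = -\theta(\al_i)$, so that multiplying $\mfX_{\la-\al_i+\theta(\al_i)}$ on the left by it lands in degree $\la - \al_i$.

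The remaining point is the equivalence with \eqref{quasiK:Bi:2}: I would invoke the preceding lemma (stating that \eqref{quasiK:Bi:2} is equivalent, for invertible $\mfX$, to the pair \eqref{quasiK:Bi:3a}--\eqref{quasiK:Bi:3b} via \eqref{Di:commutatorFi} and the linear independence of $t_i, t_i^{-1}$ over $(U_q\mfn^+)^{\mc O^+}$), and then note that an identity between two elements of $(U_q\mfn^+)^{\mc O^+}$ holds if and only if all of its $\Qlat^+$-graded components agree, since the grading is respected by every operation appearing ($D_i^{(\ell,r)}$, multiplication, and the fixed homogeneous factors $\ze_i$, $\wb{\si_i}$, $\Ad(\wt T_X^{\pm 1})(E_{\tau(i)})$). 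Thus \eqref{quasiK:Bi:3a}--\eqref{quasiK:Bi:3b} hold if and only if \eqref{quasiK:Bi:cpts:a}--\eqref{quasiK:Bi:cpts:b} hold for all $\la \in \Qlat^+$ and $i \in \indI$, which is the assertion. The only mildly delicate step is bookkeeping the degree shift $w_X(\al_{\tau(i)}) = -\theta(\al_i)$ correctly and keeping track of $w_X$ being its own inverse; once that is pinned down, everything else is formal extraction of graded pieces. I do not anticipate a genuine obstacle here — this lemma is purely a reformulation, repackaging the already-proven operator identities \eqref{quasiK:Bi:3a}--\eqref{quasiK:Bi:3b} in a component-wise form convenient for the inductive construction of $\mfX$ in the subsequent sections.
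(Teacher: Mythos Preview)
Your proposal is correct and follows exactly the approach the paper indicates: the paper simply says to project \eqref{quasiK:Bi:3a}--\eqref{quasiK:Bi:3b} onto the root space $(U_q\mfn^+)_{\la-\al_i}$ and states the lemma without further detail. Your computation of the degree shift via $w_X(\al_{\tau(i)}) = -\theta(\al_i)$ (using $\theta = -w_X \circ \tau$ and $w_X^2 = \id$) is precisely the bookkeeping the paper leaves implicit, and the reduction to \eqref{quasiK:Bi:3a}--\eqref{quasiK:Bi:3b} via \eqref{Di:commutatorFi} and linear independence of $t_i, t_i^{-1}$ is exactly what the paper invokes just before the lemma.
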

We normalize $\mfX$ by setting $\mfX_0 = 1$.
In order to show that the system \eqref{quasiK:Bi:cpts:a}-\eqref{quasiK:Bi:cpts:b} 
has a solution, we rely on \cite[Prop.~6.3]{BK19}.

\begin{proposition} \label{prop:A:eqns}
Let $\mu \in \Qlat^+$ be a positive weight of height $\geqslant 2$ and 
$A^{(r)}_i,A^{(\ell)}_i \in (U_q\mfn^+)_{\mu - \al_i}$, for $i\in\indI$, a collection of given elements.
\begin{enumerate}\itemsep0.25cm
\item  There exists $u \in (U_q\mfn^+)_\mu$ such that, for any $i\in\indI$, 
\eq{
\label{Di:eqns}
D_i^{(r)}(u) = A^{(r)}_i \qq\mbox{and}\qq D_i^{(\ell)}(u) = A^{(\ell)}_i 
}
if and only if, for any $i,j \in \indI$, we have
\eq{ \label{A:compatibility}
D_i^{(r)}(A^{(\ell)}_j) = D_j^{(\ell)}(A^{(r)}_i) 
}
and, for $i \ne j$,
\eq{ \label{A:eqns}
\frac{1}{q_i-q_i^{-1}} \sum_{s=1}^{1-a_{ij}} (-1)^s \binom{1-a_{ij}}{s}_{q_i} \big\langle F_i^{1-a_{ij}-s} F_j F_i^{s-1},A^{(r)}_i \big\rangle = \frac{1}{q_j^{-1}-q_j} \big\langle F_i^{1-a_{ij}} , A^{(r)}_j \big\rangle.
}
\item If the system \eqref{Di:eqns} has a solution, it is unique.
\end{enumerate}
\end{proposition}

\begin{remark}
If $\mfX = \sum_{\la \in \Qlat} \mfX_\la$ with $\mfX_0=1$ is a solution of the system \eqref{quasiK:Bi:cpts:a}-\eqref{quasiK:Bi:cpts:b}, then so is $\op(\mfX)|_{\ga_i \leftrightarrow \ga_{\tau(i)}}$. 
Therefore, by uniqueness, $\op(\mfX)|_{\ga_i \leftrightarrow \ga_{\tau(i)}} = \mfX$.\hfill \rmkend
\end{remark}

Finally, proceeding exactly as in \cite[case $(3) \Rightarrow (4)$ of the proof of Prop. 6.1]{BK19}, we get the following result.
\begin{proposition} \label{prop:quasiK:zerocomponents}\label{prop:quasiK:Et}
	Let $\mfX \in (U_q\mfn^+)^{\mc O^+}$ be an invertible solution of \eqref{quasiK:Bi:cpts:a}. %for any $i \in \indI$, % this is clear
	Then, $\mfX_\la = 0$ unless $\la \in (\Qlat^+)^{-\theta}$, \ie
	$\mfX$ has the form
	\[\mfX = \sum_{\la \in (\Qlat^+)^{-\theta}} \mfX_{\bm \ga,\bm \si;\la}\] 
	with $\mfX_{\bm \ga,\bm \si;\la} \in (U_q\mfn^+)_\la$. Moreover, 
	$[\mfX,u]=0$ for any $u \in U_q\mfn^+_XU_q\mfh^\theta$.
\end{proposition}

%%%%%%%%%%%%%%%%%%%%%%%%%%%

\subsection{The case $\bm \si = \bm 0$}

We prove the result in the case $\bm \si = \bm 0$. Namely, we have the following.

\begin{theorem} \label{thm:quasiK}
	For any $\bm \ga \in \Ga_q$ and $\bm \si = \bm 0$, there exists a unique solution $\mfX$ of the
	system \eqref{quasiK:Bi} of the form 
	$\mfX = \sum_{\mu \in (\Qlat^+)^{-\theta}} \mfX_\mu$ with $\mfX_0 = 1$ and $\mfX_\mu \in (U_q\mfn^+)_\mu$.
\end{theorem}

Note that, by uniqueness, $\wb{\mfX} = \mfX^{-1}|_{\bm \ga \mapsto \bm \ga'}$. The proof is carried out in 
Sections \ref{thm:intertwiner-0-part-1}-\ref{thm:intertwiner-0-part-2}.

\subsubsection{}\label{thm:intertwiner-0-part-1}
The recursive construction of $\mfX$ through Proposition \ref{prop:A:eqns} (i) relies on the following technical result.
\begin{lemma}\label{lem:fundlemma}\hfill
	\begin{enumerate}\itemsep0.25cm
		\item For any $i\not\in X$, $\big( D_i^{(r)} \circ \Ad(\wt T_X)\big)(E_i)$ is fixed by $\op \circ \tau$.
		\item Let $i,j \in \indI$ such that $i \ne j$ and consider $\la_{ij}\coloneqq(1-a_{ij})\al_i + \al_j \in \Qlat^+$.
		If $\theta(\la_{ij}) = -\la_{ij}$, then either $\tau(j)=i \in \Ieq \cup \tau(\Ieq)$ or $i,j \in \Ins$.
		\item 
Let $\mu \in \Qlat^+$ and $j \in \indI \backslash X$.
If for all $\la \le \mu$ we have elements $\mfX_\la \in (U_q\mfn^+)_\la$ satisfying \eqref{quasiK:Bi:cpts:a} for all $i \in \indI$ and $\mfX_\mu \ne 0$ then $\mu \in \Z_{\ge 0} (\al_j - \theta(\al_j)) + \Qlat^+_{I \backslash \{ j \}}$.
	\end{enumerate}
\end{lemma}

\begin{proof}
	(i) The statement that it is fixed by $\op \circ \tau$ is \cite[Thm.~4.1]{BW21}, the proof of which does not use the condition $\al_i(\rho^\vee_X) \in \Z$, so that it holds for all pairs $(X,\tau)$ such that $X \subseteq I$ of finite type, $\tau \in \Aut_X(A)$ is involutive and $\tau|_X = \oi_X$.\\
	
	\noindent
	(ii) This is \cite[Lem.~6.4]{BK19}. 
	Note that in the proof nothing stronger than the defining condition of $\GSat(A)$ is used, namely that there exists no pair $(i,j) \not\in X \times X$ such that $\tau(i)=i$, the connected component of $X$ neighbouring $i$ is $\{j\}$ and $a_{ij}=-1=a_{ji}$.\\
	
	\noindent
	(iii) This is \cite[Lem.~6.5]{BK19}. Recall that we assumed $\si_j=0$ at the start of this section.
\end{proof}

\subsubsection{}\label{thm:intertwiner-0-part-2}

We fix $\mu \in \Qlat^+$ and assume that for all $\la < \mu$ we have constructed elements $\mfX_\la \in (U_q\mfn^+)_\la$ satisfying $\mfX_0=1$ and, for all $i \in \indI$, \eqref{quasiK:Bi:cpts:a}-\eqref{quasiK:Bi:cpts:b}.
Define, for all $i \in \indI$, the following elements in $(U_q\mfn^+)_{\mu-\al_i}$:
\begin{align} 
\label{Ai:def} A^{(r)}_i &\coloneqq (q_i-q_i^{-1}) \ze_i \, \mfX_{\mu-\al_i+\theta(\al_i)} \Ad(\wt T_X^{-1})(E_{\tau(i)}) , \\
\label{Aprimei:def} A^{(\ell)}_i &\coloneqq (q_i-q_i^{-1}) \ze_{\tau(i)} \, \Ad(\wt T_X)(E_{\tau(i)}) \mfX_{\mu-\al_i+\theta(\al_i)}.
\end{align}
We will now prove \eqref{A:compatibility}-\eqref{A:eqns} for the above choices of $A^{(\ell)}_i, A^{(r)}_i$.

\begin{proposition} \label{prop:A:compatibility}
With $A^{(\ell)}_i, A^{(r)}_i$ as in \eqref{Ai:def}-\eqref{Aprimei:def}, the condition \eqref{A:compatibility} is satisfied.
\end{proposition}

\begin{proof}
We follow the proof of \cite[Lem.~6.7]{BK19}.
The crucial observation is that, for all $i,j \in \indI$, $D_j^{(r)}(\Ad(\wt T_X)(E_{\tau(i)})) = 0$ unless $j=i$, see \cite[Equation (5.10)]{BK19}, which goes back to \cite[Lem.~7.2]{Ko14}.
As a consequence, by the defining property of $D_i^{(r)}$ and the induction hypothesis most terms in $D_i^{(r)}(A^{(\ell)}_j)$ and $D_j^{(\ell)}(A^{(r)}_i)$ match pairwise.
We have $D_i^{(r)}(A^{(\ell)}_j) - D_j^{(\ell)}(A^{(r)}_i) = 0$ if $i$ or $j$ lies in $X$ or if $\tau(i) \ne j$.
Without loss of generality we may assume $j=\tau(i) \not\in X$.
In this case
\begin{align*}
D_i^{(r)}(A^{(\ell)}_j) - D_j^{(\ell)}(A^{(r)}_i) &= (q_j-q_j^{-1}) q^{(\mu-\al_j+\theta(\al_j),\al_i)} \,  \ze_i \, \big( D_i^{(r)} \circ \Ad(\wt T_X) \big)(E_{\tau(j)})\mfX_{\mu-\al_j+\theta(\al_j)} + \\
& \qq -  (q_i-q_i^{-1}) q^{(\al_j,\mu-\al_i+\theta(\al_i))} \, \ze_i \, \mfX_{\mu-\al_i+\theta(\al_i)} \big( D_j^{(\ell)} \circ \Ad(\wt T_X^{-1}) \big) (E_{\tau(i)}).
\end{align*}
In this case $q_i=q_j$ and $q^{(-\al_j+\theta(\al_j),\al_i)} = q^{(\al_j,-\al_i+\theta(\al_i))}$ so that  
\begin{align*}
D_i^{(r)}(A^{(\ell)}_j) - D_j^{(\ell)}(A^{(r)}_i) &= (q_j-q_j^{-1}) q^{(\al_j,\theta(\al_i)-\al_i+\mu)} \, \ze_i \, \Big( q^{(\mu,\al_i-\al_j)} \big( D_i^{(r)} \circ \Ad(\wt T_X) \big)(E_i)\mfX_{\mu-\al_j+\theta(\al_j)} + \\
& \hspace{60mm} - \mfX_{\mu-\al_i+\theta(\al_i)} \big( D_j^{(\ell)} \circ \Ad(\wt T_X^{-1}) \big) (E_j) \Big).
\end{align*}
Recall \eqref{theta:relation}.
Also, note that $\big( D_j^{(\ell)} \circ \Ad(\wt T_X^{-1}) \big) (E_j)$ lies in $U_q\mfn^+_X$ and hence commutes with $\mfX_{\mu-\al_i+\theta(\al_i)}$.
We obtain
\begin{align*}
D_i^{(r)}(A^{(\ell)}_j) - D_j^{(\ell)}(A^{(r)}_i) &= (q_j-q_j^{-1}) q^{(\al_j,\theta(\al_i)-\al_i+\mu)} \, \ze_i \, \mfX_{\mu-\al_i+\theta(\al_i)}  \cdot \\
& \qq \cdot \Big( q^{(\mu,\al_i-\al_j)} \big( D_i^{(r)} \circ \Ad(\wt T_X) \big)(E_i) - \big( D_j^{(\ell)} \circ \Ad(\wt T_X^{-1}) \big) (E_j) \Big).
\end{align*}
If $\mfX_{\mu-\al_i+\theta(\al_i)}=0$ we obtain the desired statement; hence we may assume that it is nonzero. 
By Proposition \ref{prop:quasiK:zerocomponents} we have $\theta(\mu)=-\mu$.
Applying \eqref{theta:relation} again, we have $(\mu,\al_i-\al_j)=0$.
By \cite[37.2.4]{Lus94} we have $\op \circ \Ad(\wt T_X) \circ \op = \Ad(\wt T_X^{-1})$.
Recalling \eqref{Di:op}, we obtain
\begin{align*}
D_i^{(r)}(A^{(\ell)}_j) - D_j^{(\ell)}(A^{(r)}_i) &= (q_j-q_j^{-1}) q^{(\al_j,\theta(\al_i)-\al_i+\mu)} \, \ze_i \, \mfX_{\mu-\al_i+\theta(\al_i)} \cdot \\
& \qq \cdot \Big( \big( D_i^{(r)} \circ \Ad(\wt T_X) \big)(E_i) - \big( \op \circ D_j^{(r)} \circ \Ad(\wt T_X) \big) (E_j) \Big).
\end{align*}
Finally, Lemma \ref{lem:fundlemma} (i) implies $D_i^{(r)}(A^{(\ell)}_j) - D_j^{(\ell)}(A^{(r)}_i) =0$, as required.
\end{proof}

\begin{proposition}
Let $A^{(r)}_i$ be given by \eqref{Ai:def}. 
Then \eqref{A:eqns} is satisfied for all $i,j \in \indI$ such that $i \ne j$.
\end{proposition}

\begin{proof}
We may follow the proof of \cite[Lem.~6.8]{BK19}.
Note that $F_i^{1-a_{ij}-s} F_j F_i^{s-1} \in (U_q\mfn^-)_{\la_{ij}-\al_i}$ and $F_i^{1-a_{ij}} \in (U_q\mfn^-)_{\la_{ij}-\al_j}$.
By the non--degeneracy of the bilinear pairing, we only need to consider the case that $\mu = \la_{ij}$.
By Proposition \ref{prop:quasiK:zerocomponents}, we may assume $\theta(\mu)=-\mu$ and by Lemma \ref{lem:fundlemma} (ii) we are in one of two possible cases: $\tau(j)=i \in \Ieq \cap \tau(\Ieq)$ or $i,j \in \Ins$.
In the former case, we have $\mu = \al_i + \al_j$, $\ga_i=\ga_j$ and $q_i=q_j$.
It follows that $A^{(r)}_i =  (q_i-q_i^{-1}) \wb{\ga_i} E_j$ and $A^{(r)}_j =  (q_i-q_i^{-1}) \wb{\ga_i} E_i$, so that \eqref{A:eqns} is an immediate consequence of $\langle F_j, E_j \rangle =\langle F_i , E_i \rangle$.

It remains to consider the case $i,j \in \Ins$, for which we can now follow the first part of \cite[Proof of Lemma 6.8, Case 2]{BK19}.
Namely, we invoke Lemma \ref{lem:fundlemma} (iii) and deduce that 
\[
(1-a_{ij}) \al_i + \al_j \in \Z_{\ge 0} (\al_j - \theta(\al_j)) + \Qlat^+_{I \backslash \{j \}} = 2\Z_{\ge 0} \al_j + \Qlat^+_{I \backslash \{ j \}}, 
\]
which is a contradiction. 
Hence this case does not occur and there is nothing left to prove.
\end{proof}

Finally, relying on the previous results, the proof of \cite[Thm.~6.10]{BK19} applies and we deduce Theorem \ref{thm:quasiK}.

%%%%%%%%%%%%%%%%%%%%%%%%%%%

\subsection{The intertwining property of $\mfX$ for general $\bm \si$} \label{sec:generalsigma}

Theorem \ref{thm:quasiK} generalizes as follows.

\begin{theorem} \label{thm:quasiK:2}
	For any $(\bm \ga,\bm \si) \in \Ga_q \times \Si_q$, there is a unique $\mfX = \mfX_{\bm \ga,\bm \si} = \sum_{\la \in (\Qlat^+)^{-\theta}} \mfX_{\bm \ga,\bm \si;\la} \in (U_q\mfn^+)^{\mc O^+}$ such that $\mfX_{\bm \ga,\bm \si;0} = 1$, $\mfX_{\bm \ga,\bm \si;\la} \in (U_q\mfn^+)_\la$ and the system \eqref{quasiK:Bi} is satisfied.
\end{theorem}

The proof relies on a generalization of the arguments made in \cite[Sec.~3.5]{DK19} to the case $(\bm \ga',\bm \si') \ne (\bm \ga,\bm \si)$ and  is carried out in \ref{thm:quasiK-2-1}-\ref{thm:quasiK-2-2}.

\subsubsection{}\label{thm:quasiK-2-1}
By \cite[Thm.~7.1]{Ko14}, the algebra $U_q\mfk_{\bm \ga,\bm \si}$ has a presentation in terms of generators and relations, which are independent of $\bm \si$. That is, the assignments 
\[
\phi_{\bm \si}(B_{i;\ga_i,0}) = B_{i;\ga_i,\si_i}\qq\mbox{and}\qq\phi_{\bm \si}(u) = u
\]
for $i \not\in X$ and $u\in U_q\mfg_XU_q\mfh^\theta$,
define an algebra isomorphism
\[
\phi_{\bm \si}: U_q\mfk_{\bm \ga,\bm 0} \to U_q\mfk_{\bm \ga,\bm \si} .
\]
Hence, $\chi_{\bm \si}\coloneqq\eps \circ \phi_{\bm \si}: U_q\mfk_{\bm \ga,0} \to \F$
is a one-dimensional representation.
Note that $\chi_{\bm \si}(B_{i;\ga_i,0}) = \si_i$ for $i \not\in X$.

\begin{lemma}
We have the following identities of morphisms  of algebras $U_q\mfk_{\bm \ga,\bm 0}\to U_q\mfg$:
\begin{align}
\label{phi:factorization1} \phi_{\bm \si} &= (\chi_{\bm \si} \ot \id) \circ \Del, \\
\label{phi:factorization2} \phi_{\bm \si} &= \wb{\phantom{i} \cdot \phantom{i}} \circ (\chi_{\bm \si'} \ot \id) \circ (\wb{\phantom{i} \cdot \phantom{i}}^B \ot \wb{\phantom{i} \cdot \phantom{i}}) \circ \Del.
\end{align}
where the map $\wb{\phantom{i} \cdot \phantom{i}}^B: U_q\mfk_{\bm \ga,\bm 0} \to U_q\mfk_{\bm \ga',\bm 0}$ is defined by \eqref{barB:def}.
\end{lemma}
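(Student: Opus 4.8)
The two identities are claims about algebra homomorphisms out of $U_q\mfk_{\bm\ga,\bm 0}$, so by the presentation of $U_q\mfk_{\bm\ga,\bm 0}$ in terms of generators (namely $U_q\mfg_X U_q\mfh^\theta$ together with the $B_{i;\ga_i,0}$ for $i\in\indI\backslash X$) it suffices to check that both sides agree on these generators. Both displayed maps send elements of $U_q\mfg_X U_q\mfh^\theta$ to themselves: for \eqref{phi:factorization1} this is because $\chi_{\bm\si}$ is the counit on $U_q\mfg_X U_q\mfh^\theta$ (as $\phi_{\bm\si}$ fixes this subalgebra) and $(\eps\ot\id)\circ\Del=\id$; for \eqref{phi:factorization2} one additionally uses that the bar involution and the map $\wb{\phantom{i}\cdot\phantom{i}}^B$ both fix $U_q\mfg_X U_q\mfh^\theta$ pointwise. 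So in both cases the only real content is the evaluation on $B_i := B_{i;\ga_i,0}$ for $i\in\indI\backslash X$.

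\textbf{Proof of \eqref{phi:factorization1}.} The key input is the coideal property \eqref{Del:Bi}, which gives
\[
\Del(B_i) = B_i \ot t_i^{-1} + (\text{terms in } U_q\mfg_X U_q\mfh^\theta \ot U_q\mfg).
\]
Applying $\chi_{\bm\si}\ot\id$ and using that $\chi_{\bm\si}(B_i)=\si_i$ while $\chi_{\bm\si}$ agrees with $\eps$ on $U_q\mfg_X U_q\mfh^\theta$, the second group of terms collapses: writing $\Del(B_i) = B_i\ot t_i^{-1} + \sum_k u_k \ot v_k$ with $u_k\in U_q\mfg_X U_q\mfh^\theta$, we get $(\chi_{\bm\si}\ot\id)(\Del(B_i)) = \si_i t_i^{-1} + \sum_k \eps(u_k) v_k$. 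Now $\sum_k \eps(u_k)v_k$ is exactly what one obtains by applying $\eps\ot\id$ to $\Del(B_i)-B_i\ot t_i^{-1}$, i.e.\ $(\eps\ot\id)(\Del(B_i)) - \eps(B_i)t_i^{-1} = B_i - 0 = B_i$ (using $\eps(B_i)=0$, valid for $\bm\si=\bm 0$). Hence $(\chi_{\bm\si}\ot\id)(\Del(B_i)) = B_i + \si_i t_i^{-1} = B_{i;\ga_i,\si_i} = \phi_{\bm\si}(B_i)$, as required.

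\textbf{Proof of \eqref{phi:factorization2}.} Here I would track the effect of $(\wb{\phantom{i}\cdot\phantom{i}}^B\ot\wb{\phantom{i}\cdot\phantom{i}})\circ\Del$ on $B_i$. Since $\wb{\phantom{i}\cdot\phantom{i}}^B$ sends $B_{i;\ga_i,0}$ to $B_{i;\ga'_i,0}$ and fixes $U_q\mfg_X U_q\mfh^\theta$, applying it to the first tensor factor of \eqref{Del:Bi} and then the ordinary bar involution to the second factor yields $B_{i;\ga'_i,0} \ot \wb{t_i^{-1}} + (\text{terms with first factor in } U_q\mfg_X U_q\mfh^\theta)$, and one checks $\wb{t_i^{-1}} = t_i$. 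Applying $\chi_{\bm\si'}\ot\id$ and arguing exactly as in the previous paragraph — using $\chi_{\bm\si'}(B_{i;\ga'_i,0})=\si'_i$ and that the transformed "error terms" reassemble, after applying $\eps\ot\id$, into $\wb{B_{i;\ga'_i,0}} - \eps(B_{i;\ga'_i,0})t_i = \wb{B_{i;\ga'_i,0}}$ — gives $\wb{B_{i;\ga'_i,0}} + \si'_i t_i$. Finally, applying the outer bar involution and invoking the explicit formula \eqref{Bi:prime}, namely $\wb{B_{i;\ga'_i,0}} = F_i - \ze_{i}\,\Ad(\wt T_X^{-1})(E_{\tau(i)})\,t_i$ type expression whose bar is $B_{i;\ga_i,0}$ plus the appropriate $\si$-term, together with $\wb{\si'_i t_i} = \wb{\si'_i}\,t_i^{-1} = \si_i t_i^{-1}$ (the last equality from the description of the involution on $\Si_q$ in the Lemma of Section 7.3), one recovers $B_{i;\ga_i,0} + \si_i t_i^{-1} = B_{i;\ga_i,\si_i} = \phi_{\bm\si}(B_i)$.

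\textbf{Main obstacle.} The bookkeeping in \eqref{phi:factorization2} is the delicate part: one must correctly interleave the two different bar-type operations with the non-primitive coproduct \eqref{Del:Bi} and keep careful track of how the parameters transform under $\bm\ga\mapsto\bm\ga'$, $\bm\si\mapsto\bm\si'$ and under the ordinary bar involution, ensuring at the end that the double application of the parameter involution $'$ returns to $\bm\ga$ and that $\wb{\si'_i}=\si_i$. A clean way to organize this is to first establish \eqref{phi:factorization1} and then derive \eqref{phi:factorization2} from it by composing with the known relation $\wb{\phantom{i}\cdot\phantom{i}}\circ\wb{\phantom{i}\cdot\phantom{i}}^B = \text{(identification of $U_q\mfk_{\bm\ga,\bm 0}$ with $U_q\mfk_{\bm\ga',\bm 0}$ followed by bar)}$ and using $(\wb{\phantom{i}\cdot\phantom{i}}\ot\wb{\phantom{i}\cdot\phantom{i}})\circ\Del = \Del\circ\wb{\phantom{i}\cdot\phantom{i}}$ on $U_q\mfg$ restricted appropriately — this reduces \eqref{phi:factorization2} to \eqref{phi:factorization1} applied with primed parameters plus the observation that $\phi_{\bm\si}$ and $\phi_{\bm\si'}$ are intertwined by the bar involution, which is precisely the content of $\wb{\si'_i}=\si_i$.
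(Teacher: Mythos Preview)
Your argument for \eqref{phi:factorization1} is correct and matches the paper's.

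For \eqref{phi:factorization2}, the overall strategy is right but the execution has real errors. First, $\wb{\phantom{i}\cdot\phantom{i}}^B$ does \emph{not} fix $U_q\mfg_X U_q\mfh^\theta$ pointwise: by definition $\wb{\phantom{i}\cdot\phantom{i}}^B = \wb{\phantom{i}\cdot\phantom{i}} \circ f$, and since $f$ is the identity there, $\wb{\phantom{i}\cdot\phantom{i}}^B$ acts as the ordinary bar involution on that subalgebra. Consequently, on the error terms the relevant composite is $(\eps \circ \wb{\phantom{i}\cdot\phantom{i}}) \ot \wb{\phantom{i}\cdot\phantom{i}}$, not $(\eps \ot \id)$ followed by bar on the second factor; the point is that $(\eps \circ \wb{\phantom{i}\cdot\phantom{i}}) \ot \wb{\phantom{i}\cdot\phantom{i}} = \wb{\phantom{i}\cdot\phantom{i}} \circ (\eps \ot \id)$, which yields $\wb{B_{i;\ga_i,0}}$ (unprimed $\ga$), not $\wb{B_{i;\ga'_i,0}}$. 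Your version mixes up $\ga_i$ and $\ga'_i$ here, and this propagates: the claim that the outer bar of $\wb{B_{i;\ga'_i,0}}$ equals $B_{i;\ga_i,0}$ is false, since bar is an involution. The formula you quote from \eqref{Bi:prime} is actually the one for $\wb{B_{i;\ga_i,0}}$, not $\wb{B_{i;\ga'_i,0}}$. Once the correct $\wb{B_{i;\ga_i,0}}$ is obtained, the outer bar gives $B_{i;\ga_i,0} + \wb{\si'_i}\,t_i^{-1} = B_{i;\ga_i,\si_i}$ directly, with no need to invoke \eqref{Bi:prime}.

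The paper sidesteps the semilinearity bookkeeping by splitting into two cases: for $i \notin I_{\mathsf{ns}}$ one has $\si'_i = 0$, so the $B_i \ot t_i^{-1}$ term drops out and only $(\eps \circ \wb{\phantom{i}\cdot\phantom{i}} \ot \wb{\phantom{i}\cdot\phantom{i}}) \circ \Del$ remains; for $i \in I_{\mathsf{ns}}$ the coproduct is simply $B_i \ot t_i^{-1} + 1 \ot B_i$ and the check is by direct substitution. Your uniform argument can be made to work, but only after repairing the two points above. Finally, the alternative route you sketch at the end relies on $(\wb{\phantom{i}\cdot\phantom{i}} \ot \wb{\phantom{i}\cdot\phantom{i}}) \circ \Del = \Del \circ \wb{\phantom{i}\cdot\phantom{i}}$, which is false on $U_q\mfg$ --- its failure is precisely measured by the quasi--R--matrix --- so that reduction does not go through.
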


\begin{proof}
The relations \eqref{phi:factorization1}-\eqref{phi:factorization2} can be verified by checking on generators. 
Applying both sides of \eqref{phi:factorization1} to $u \in U_q\mfg_XU_q\mfh^\theta$ we obtain $u$ for either side.
Furthermore, applying $\phi_{\bm \si} \ot \id$ to \eqref{Del:Bi} implies
\[
\big( ( \phi_{\bm \si} \ot \id) \circ \Del \big)(B_{i;\ga_i,0}) - B_{i;\ga_i,\si_i} \ot t_i^{-1} = \Del(B_{i;\ga_i,0}) - B_{i;\ga_i,0} \ot t_i^{-1}.
\]
To this we apply $\eps \ot \id$ and deduce, using $\eps(B_{i;\ga_i,\si_i}) = \si_i$, that $(\chi_{\bm \si} \ot \id) \big(\Del(B_{i;\ga_i,0}) \big) = B_{i;\ga_i,\si_i}$.
This completes the proof of \eqref{phi:factorization1}.\\

As for \eqref{phi:factorization2}, one can check that the right-hand side fixes $E_i$ and $F_i$ for $i \in X$ and $t_h$ for $h \in (\Qlat^\vee_\ext)^\theta$ pointwise, so that \eqref{phi:factorization2} is true when restricted to $U_q\mfg_X U_q\mfh^\theta$.
It remains to prove that 
\eq{ \label{phi:factorization2:Bi}
\wb{\big( \chi_{\bm \si'} \circ \wb{\phantom{i} \cdot \phantom{i}}^B \ot \wb{\phantom{i} \cdot \phantom{i}} \big)(\Del(B_{i;\ga_i,0}))} = B_{i;\ga_i,\si_i}
}
for $i \not\in X$. If $i \notin \Ins$, then $\si_i=\si'_i=0$. Thus, from the identity
\[
\big( \chi_{\bm \si'} \circ \wb{\phantom{i} \cdot \phantom{i}}^B \ot \wb{\phantom{i} \cdot \phantom{i}} \big)(B_{i;\ga_i,0} \ot t_i^{-1}) = \chi_{\bm \si'}(B_{i;\ga_i,0}) \ot t_i = 0\,,
\] 
\eqref{Del:Bi}, and $\eps(\wb{u}) = \eps(u)|_{q \to q^{-1}}$, we deduce that 
\[
\wb{\big( \chi_{\bm \si'} \circ \wb{\phantom{i} \cdot \phantom{i}}^B \ot \wb{\phantom{i} \cdot \phantom{i}} \big)(\Del(B_{i;\ga_i,0}))} = 
\wb{\big( \eps \circ \wb{\phantom{i} \cdot \phantom{i}} \ot \wb{\phantom{i} \cdot \phantom{i}} \big)(\Del(B_{i;\ga_i,0}))}
=
\big( \eps \ot \id \big)(\Del(B_{i;\ga_i,0})) = B_{i;\ga_i,0}\,.
\]
Therefore, \eqref{phi:factorization2:Bi} is satisfied in this case.
On the other hand, if $i \in \Ins$, then by \eqref{Bi:theta} we have
\[
B_{i;\ga_i,\si_i} = F_i - q_i^{-1} \ga_i E_i t_i^{-1} + \si_i t_i^{-1}
\qq\mbox{and}\qq
\Del(B_{i;\ga_i,0}) = B_{i;\ga_i,0} \ot t_i^{-1} + 1 \ot B_{i;\ga_i,0}.
\]
Therefore, 
\[
\wb{\big( \chi_{\bm \si'} \circ \wb{\phantom{i} \cdot \phantom{i}}^B \ot \wb{\phantom{i} \cdot \phantom{i}} \big)(\Del(B_{i;\ga_i,0}))} = 
\wb{\si_i' t_i + \wb{B_{i;\ga_i,0}}} = B_{i;\ga_i,0} + \si_i t_i^{-1} = B_{i;\ga_i,\si_i}\,.
\]
The result follows.
\end{proof}

\subsubsection{}\label{thm:quasiK-2-2}
Following \cite[3.1]{BW18} and \cite[Sec.~3.3]{Ko20}, we consider the \emph{2-tensor quasi-K-matrix for $U_q\mfk_{\bm \ga,\bm \si}(X,\tau)$}, \ie
the operator in $(U_q\mfg^{\ot 2})^{\mc O^+}$ given by
\eq{ \label{quasiRB:def}
\Theta_{\bm \ga,\bm \si} \coloneqq \Del(\mfX_{\bm \ga,\bm \si}) \cdot \Theta \cdot \mfX_{\bm \ga,\bm \si}^{-1} \ot 1\,.
}

By \cite[Prop.~3.2]{BW18} and \cite[Prop.~3.9]{Ko20}, it satisfies
\eq{ \label{quasiRB:intw}
\Theta_{\bm \ga,\bm \si}  \; \Big(\big(\wb{\phantom{i} \cdot \phantom{i}}^B \ot \wb{\phantom{i} \cdot \phantom{i}}\big) \circ \Del\Big)(b) = \Del\Big(\wb{b}^B\Big) \; \Theta_{\bm \ga,\bm \si}
}
for $b \in U_q\mfk_{\bm \ga,\bm \si}$.
By \cite[Prop.~3.10, cf.~Rmk.~3.11]{Ko20} (see also \cite[Prop.~3.5]{BW18}), 
the operator $\Theta_{\bm \ga,\bm \si}$ is given by a series
\eq{ \label{quasiRB:decomposition}
\Theta_{\bm \ga,\bm \si}  =  \sum_{\la \in \Qlat^+} \Theta_{\bm \ga,\bm \si;\la} \qq
\mbox{where}\qq \Theta_{\bm \ga,\bm \si;\la} \in U_q\mfk_{\bm \ga,\bm \si} \ot U_q\mfn^+_\la.
}
We then obtain the following generalization of \cite[Prop.~3.26]{DK19}.
\begin{proposition}
For any $(\bm \ga , \bm \si) \in \Ga_q \times \Si_q$, the operator in $(U_q\mfn^+)^{\mc O^+}$
given by
\[
\mfX'_{\bm \ga,\bm \si} \coloneqq (\chi_{\bm \si'} \ot \id)(\Theta_{\bm \ga,\bm 0})
\]
satisfies the system \eqref{quasiK:Bi}.
\end{proposition}

\begin{proof}
Applying $\chi_{\bm \si'} \ot \id$ to \eqref{quasiRB:intw} in the special case $\bm \si = \bm 0$, we deduce
\[
\mfX'_{\bm \ga,\bm \si} \; \Big(\big(\chi_{\bm \si'} \ot \id \big) \circ \big( \wb{\phantom{i} \cdot \phantom{i}}^B \ot \wb{\phantom{i} \cdot \phantom{i}}\big) \circ \Del\Big)(b) = \big( \chi_{\bm \si'}  \ot \id \big) \Del\big(\wb{b}^B\big) \; \mfX'_{\bm \ga,\bm \si} 
\]
for $b \in U_q\mfk_{\bm \ga,\bm 0}$.
By \eqref{phi:factorization1}-\eqref{phi:factorization2}, we obtain
\[
\mfX'_{\bm \ga,\bm \si} \; \wb{\phi_{\bm \si}(b)} = \phi_{\bm \si'}\big(\wb{b}^B\big) \; \mfX'_{\bm \ga,\bm \si}
\]
for any $b \in U_q\mfk_{\bm \ga,\bm 0}$. The result follows.
\end{proof}

By \eqref{quasiRB:decomposition}, we have
\[
\mfX'_{\bm \ga,\bm \si} = \sum_{\la \in \Qlat^+} \mfX'_{\bm \ga,\bm \si;\la} \qq \mbox{where}\qq \mfX'_{\bm \ga,\bm \si;\la} \in (U_q\mfn^+)_\la\,.
\]
with $\mfX'_{\bm \ga,\bm \si;0}=1$.
By Proposition \ref{prop:A:eqns} (ii), we deduce that $\mfX_{\bm \ga,\bm \si} = \mfX'_{\bm \ga,\bm \si}$. Finally, Theorem~\ref{thm:quasiK:2} follows. This concludes the proof of part (i) of Theorem ~\ref{thm:sum-quasi-k}.

\subsection{The coproduct formula for $\mfX$}\label{ss:coprod-formula}
We now address the proof of part (ii) of Theorem ~\ref{thm:sum-quasi-k}.
In order to establish a factorization of the coproduct of $\mfX$, we use the bilinear pairing again. 
We consider the subalgebra 
\eq{ \label{coproductsubalg}
	(U_q\mfb^+ \ot U_q\mfn^+)^{\mc O^+}_\Del \coloneqq \prod_{\la \in \Qlat^+} U_q\mfn^+ t_\la \ot (U_q\mfn^+)_\la \subset (U_q\mfb^+ \ot U_q\mfn^+)^{\mc O^+}.
}
Note that $\Del((U_q\mfn^+)^{\mc O^+}) \subset (U_q\mfb^+ \ot U_q\mfn^+)^{\mc O^+}_\Del$.
By \cite[Lem.~2.4]{BK19} we have, for all $X \in (U_q\mfb^+ \ot U_q\mfn^+)^{\mc O^+}_\Del$,
\eq{ \label{pairing:equality}
	\forall y,z \in U_q\mfn^- \; \langle y \ot z, X \rangle = 0 \qq \implies \qq X = 0.
}
The following result is the direct generalization of \cite[Theorem~9.4]{BK19} to the case of unrestricted parameters.

\begin{theorem} \label{thm:quasiK:Delta}
	We have
	\eq{ \label{quasiK:Delta}
		\Del(\mfX) = (\mfX \ot 1) \cdot (\Ad(\bm \ga') \circ \theta_q^{-1} \ot \id)(\Theta) \cdot \Ad(\ka_\id)(1 \ot \mfX) \cdot \Theta_X^{-1}.
	}
\end{theorem}

The proof, given in Section~\ref{ss:coprod-formula-pf}, relies on the properties of the 
auxiliary element $\Psi$, which we discuss in Sections~\ref{sec:Psi}-\ref{ss:properties-psi}.

%%%%%%%%%%%%%%%%%%%%%%%%%%%

\subsection{The auxiliary element $\Psi$} \label{sec:Psi}

In \cite[Secs. 8 and 9]{BK19} the coproduct of $\mfX$ is computed for the special case that $\indI$ is of finite type, $\bm \ga = \bm \ga'$ and $\bm \si = \bm \si'$.
We will now generalize this.

For $X \subseteq I$, recall Lusztig's quasi-R-matrix $\Theta_X \in (U_q\mfn^-_X \ot U_q\mfn^+_X)^{\mc O^+}$.
The key ingredient for the coproduct of $\mfX$ is the element
\eq{ \label{Thetadiagram:def}
\Psi \coloneqq \big( \Ad(\bm \ga') \circ \theta_q^{-1} \ot \id \big)(\Theta \Theta_X^{-1}) \in (U_q\mfg \ot U_q\mfg)^{\mc O^+}.
}

Note that we proved in Proposition \ref{prop:quasiR:quotient} that the element 
$\Theta\Theta_X^{-1}$ is supported on $\Qlat^+\setminus\Qlat^+_X$. More precisely,
\[
\Theta\Theta_X^{-1}\in \prod_{\la \in \Qlat^+ \backslash \Qlat_X^+} (U_q\mfn^-)_\la \ot (U_q\mfn^+)_\la
\]
We shall use this result in Lemma \ref{lem:Thetadiagram:properties}.

\begin{remark}
Let $\indI$ be of finite type and assume $\ga'_i = \ga_i$ for all $i\not\in X$; note that for $i \in X$ we automatically have $\ga'_i = \ga_i (=1)$.
We observe that the function $\xi: \Plate \to \F^\times$ defined by \cite[Equation~(8.1)]{BK19} is of the form $\xi(\mu) = G_{-\id-\theta,0}(\mu) \xi_\grp(\mu)$ with $\xi_\grp \in \Hom_\grp(\Plate,\F^\times)$ such that $\xi_\grp = G_{0,\rho-\rho_X} \ga'$ for some extension $\ga' \in \Hom_\grp(\Plate,\F^\times)$ of the group homomorphism: $\Qlat \to \F^\times$ defined by $\al_i \mapsto \ga'_i$. 
It follows that $\xi = G_{-\id-\theta,\rho-\rho_X} \ga'$.
Also, we have $\Ad(\wt T_I^{-1}) = \wt \om \circ \oi_I$.
By inspecting the list of generalized Satake diagrams \cite[Table I]{He84} we see that $\oi_I$ preserves $X$, so commutes with $\Ad(\wt T_X^{-1})$, and commutes with $\tau$.
Hence 
\[
\Ad(\wt T_I^{-1} \wt T_X^{-1}) \circ \tau \circ \oi_I = \wt \om \circ \oi_I \circ \Ad(\wt T_X^{-1}) \circ \tau \circ \oi_I = \wt \om^2 \circ \wt \theta_q^{-1} = \Ad(G_{\id + \theta, \rho_X - \rho}) \circ \theta_q^{-1}
\] 
so that $\Ad(\xi \wt T_I^{-1} \wt T_X^{-1}) \circ \tau \circ \oi_I  =  \Ad(\bm \ga') \circ \theta_q^{-1}$ and hence $\Psi$ coincides with the element defined by \cite[Equation (9.1)]{BK19} \hfill \rmkend
\end{remark}

%%%%%%%%%%%%%%%%%%%%%%%%%%%

\subsection{Properties of $\Psi$}\label{ss:properties-psi}

The element $\Psi$ satisfies the following properties, which generalize \cite[Lem. 9.1-9.2-9.3]{BK19}. 

\begin{lemma} \label{lem:Thetadiagram:properties}
We have
\begin{align}
\label{Thetadiagram:element}
\Psi &\in \prod_{\la \in \Qlat^+ \backslash \Qlat_X^+} (U_q\mfn^+)_{-\theta(\la)} t_\la \ot (U_q\mfn^+)_\la \\
\label{Thetadiagram:Di}
(\id \ot D_i^{(r)})(\Psi) &= (q_i-q_i^{-1}) \, \ze_i \, \Psi \, \Ad\big(\Theta_X \, \wt T_X^{-1} \ot 1 \big) (E_{\tau(i)} \ot 1) \cdot (t_i \ot 1).
\end{align}
\end{lemma}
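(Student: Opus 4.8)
\textbf{Proof strategy for Lemma \ref{lem:Thetadiagram:properties}.}

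The plan is to deduce both statements from the already-established factorization of the quasi--R--matrix (Proposition \ref{prop:quasiR:quotient}), the root-space behaviour of $\theta_q$ (the Lemma in Section \ref{sec:q-pseudo-inv}), and the skew-derivation calculus from Section \ref{ss:intertwining-bis}. First I would establish \eqref{Thetadiagram:element}. By Proposition \ref{prop:quasiR:quotient} we have $\Theta\Theta_X^{-1} \in \prod_{\la \in \Qlat^+ \backslash \Qlat_X^+} (U_q\mfn^-)_{-\la} \ot (U_q\mfn^+)_\la$, so it suffices to track what $\Ad(\bm \ga') \circ \theta_q^{-1}$ does to the left tensor factor. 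For $u \in (U_q\mfn^-)_{-\la}$, the relation $\theta_q(U_q\mfg_\mu) = U_q\mfg_{\theta(\mu)}$ together with $\om(U_q\mfn^-_{-\la}) \subseteq U_q\mfn^+_\la$ and the explicit description $\theta_q = \Ad(T_{X,\tau}) \circ \om \circ \tau$ shows that $\theta_q^{-1}$ maps $(U_q\mfn^-)_{-\la}$ into a weight space $(U_q\mfg)_{-\theta(\la)}$ inside a product of $U_q\mfn^+$ and a Cartan element; the point is that since $\la \notin \Qlat_X$, $\theta(\la) = -w_X\tau(\la)$ has negative height contributions away from $X$, so that $\theta_q^{-1}(u)$ has the shape $(U_q\mfn^+)_{-\theta(\la)}\,t_\la$ (the $t_\la$ factor being forced by weight considerations — $\om$ inverts the Cartan part and $\Ad(T_{X,\tau})$ only modifies by $U_q\mfg_X$ and a group-like from $(U_q\mfh)^{\mc W}$). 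Applying $\Ad(\bm \ga')$ only rescales, preserving the shape. This gives \eqref{Thetadiagram:element}.

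Next I would prove \eqref{Thetadiagram:Di}. The idea is to apply $\id \ot D_i^{(r)}$ to $\Psi = (\Ad(\bm \ga') \circ \theta_q^{-1} \ot \id)(\Theta\Theta_X^{-1})$ and reduce to a statement about $(\id \ot D_i^{(r)})(\Theta\Theta_X^{-1})$, which should follow from the pairing characterization of $\Theta$ and the skew-derivation formula \eqref{Di:pairing}: intuitively, $D_i^{(r)}$ applied to the $U_q\mfn^+$-leg of a canonical element corresponds to multiplication by $F_i$ on the $U_q\mfn^-$-leg (up to a scalar $(q_i^{-1}-q_i)$), so one expects $(\id\ot D_i^{(r)})(\Theta) = (q_i-q_i^{-1})(\Theta \cdot (F_i t_i \ot 1))$-type identity; the same for $\Theta_X$ restricted to $i \in X$, and since we only care about $i \in \indI\backslash X$ (for $i\in X$ the identity should be checked to reduce correctly, using $\ze_i = 0$), the $\Theta_X^{-1}$ factor is essentially inert under $D_i^{(r)}$ on the relevant component. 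Then I would transport this across $\Ad(\bm \ga') \circ \theta_q^{-1} \ot \id$: multiplication by $F_i t_i$ on the left leg becomes, after $\theta_q^{-1}$, multiplication by $\theta_q^{-1}(F_i t_i)$; unwinding $\theta_q^{-1}$ via \eqref{thetaq:def} and Lemma \ref{lem:Bi:theta}-style computations (or directly via $\Ad(\Theta_X \wt T_X^{-1})$ acting on $E_{\tau(i)}$, reflecting the appearance of $\wt T_X^{-1}$ since $\theta_q$ involves $\Ad(T_{X,\tau})$ and $T_{X,\tau} = G_{\theta,\rho_X}\wt T_X$) produces exactly the factor $\Ad(\Theta_X\,\wt T_X^{-1}\ot 1)(E_{\tau(i)}\ot 1)\cdot(t_i\ot 1)$, while $\Ad(\bm \ga')$ contributes the scalar $\ga'_{\tau(i)}$, matched against the definition \eqref{phi:def} of $\ze_i$ (using $\ze_i = G_{-\theta,-\rho_X}(\al_i)\ga'_{\tau(i)}$ and absorbing the $G$-factor into the $\Ad(T_{X,\tau})$ versus $\Ad(\wt T_X)$ discrepancy via \eqref{hattheta:relation}).

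The main obstacle I anticipate is the careful bookkeeping of the Cartan/group-like corrections when passing $D_i^{(r)}$ through $\theta_q^{-1}$: $D_i^{(r)}$ is not a derivation but a twisted (skew) derivation, and $\theta_q^{-1}$ intertwines left/right structures in a way that swaps $D_i^{(r)}$ with $D_i^{(\ell)}$ (or $D_{\tau(i)}$) up to conjugation — compare \eqref{Di:op} and the identity $\op \circ \Ad(\wt T_X) \circ \op = \Ad(\wt T_X^{-1})$ — so I would need to verify that all the $q$-power prefactors and the $t_i$ versus $t_i^{-1}$ placements come out precisely as written, rather than merely up to a scalar. This is exactly the kind of computation done in \cite[Lemmas 9.1--9.3]{BK19}, and I would follow that template closely, checking at each stage that no hypothesis beyond $(X,\tau) \in \GSat(A)$ and $\tau|_X = \oi_X$ is used, so that the argument survives in the Kac--Moody and unconstrained-parameter setting. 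The reduction to $(\id\ot D_i^{(r)})(\Theta)$ being a right-multiplication, together with $D_i^{(r)}$ commuting past the $U_q\mfn^-_X$-leg appropriately (using $D_j^{(r)}(\Ad(\wt T_X)(E_{\tau(i)}))=0$ unless $j=i$, cf.\ the proof of Proposition \ref{prop:A:compatibility}), is what makes the $\Theta_X^{-1}$ factor behave.
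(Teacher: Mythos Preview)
Your approach is essentially the paper's: use Proposition \ref{prop:quasiR:quotient} plus the computation $\theta_q^{-1}((U_q\mfn^-)_{-\la}) = \Ad(G_{-\theta,\rho_X})((U_q\mfn^+)_{-\theta(\la)}) = (U_q\mfn^+)_{-\theta(\la)} t_\la$ for \eqref{Thetadiagram:element}, and for \eqref{Thetadiagram:Di} first derive $(\id \ot D_i^{(r)})(\Theta) = (q_i^{-1}-q_i)\,\Theta\,(F_i \ot 1)$ from \eqref{quasiR:intw} and \eqref{Di:commutatorFi}, handle $\Theta_X^{-1}$ via the skew Leibniz rule (it is inert for $i \notin X$; for $i \in X$ both sides vanish since $\Del(F_i)$ commutes with $\Theta\Theta_X^{-1}$), then apply $\Ad(\bm \ga') \circ \theta_q^{-1} \ot \id$ and use $(\Ad(\bm \ga') \circ \theta_q^{-1})(F_i) = -\ze_i \Ad(\wt T_X^{-1})(E_{\tau(i)}) t_i$.

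Your anticipated obstacle is spurious: there is no need to ``pass $D_i^{(r)}$ through $\theta_q^{-1}$'' or to invoke any $D^{(r)} \leftrightarrow D^{(\ell)}$ swap, because $\id \ot D_i^{(r)}$ acts only on the second tensor leg while $\Ad(\bm \ga') \circ \theta_q^{-1} \ot \id$ acts only on the first, so they commute on the nose. The only genuine bookkeeping is the Cartan correction when pulling $\Theta_X^{-1}$ past $t_i \ot t_i$ (via \eqref{quasiRX:Cartan}) and matching the scalar $G_{-\theta,\rho_X}(-\theta(\al_i))\,\ga'_{\tau(i)}$ with $\ze_i$, which is a one-line check from \eqref{phi:def}.
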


\begin{proof}
We have
\[
\theta_q^{-1}((U_q\mfn^-)_{-\la}) = \Ad(G_{-\theta,\rho_X} \wt T_X^{-1})((U_q\mfn^+)_{\tau(\la)}) = \Ad(G_{-\theta,\rho_X})((U_q\mfn^+)_{-\theta(\la)}) = (U_q\mfn^+)_{-\theta(\la)} t_\la.
\]
Therefore, \eqref{Thetadiagram:element} follows from Proposition~\ref{prop:quasiR:quotient}.
From \eqref{quasiR:intw} and \eqref{quasiRX:intw}, for $i\in X$, $\Del(F_i)$ commutes with $\Theta \Theta_X^{-1}$.
Combined with \eqref{Di:commutatorFi} and the linear independence of $t_i$ and $t_i^{-1}$ over $U_q \mfn^+$, this yields \eqref{Thetadiagram:Di} for $i \in X$.
We need to prove the case $i\not\in X$. As before, we obtain
\eq{ \label{Thetadiagram:Di:1}
(\id \ot D_i^{(r)})(\Theta) = (q_i^{-1}-q_i) \, \Theta \, F_i \ot 1
} 
for any $i\in \indI$.
Since for $i \not\in X$, we have $(\id \ot D_i^{(r)})(\Theta_X) = 0$, the identities \eqref{Di:def} and \eqref{Thetadiagram:Di:1} yield
\begin{align} \label{Thetadiagram:Di:2}
	\begin{split}
(\id \ot D_i^{(r)})(\Theta \Theta_X^{-1}) &= (\id \ot D_i^{(r)})(\Theta) \; \Ad(1 \ot t_i)\big(\Theta_X^{-1}\big)\\  &= (q_i^{-1}-q_i) \; \Theta \; F_i \ot t_i \; \Theta_X^{-1} \; 1 \ot t_i^{-1}.
	\end{split}
\end{align}
Since $\Theta_X \in (U_q\mfn_X^- \ot U_q\mfn_X^+)^{\mc O^+}$, it is fixed by both 
$\Ad(\bm \ga') \ot 1$ and $\theta_q^{-1}\ot 1$. Moreover,
\begin{align*}
\big( \Ad(\bm \ga') \circ \theta_q^{-1} \big)(F_i) &= - \Ad(\bm \ga' G_{-\theta,\rho_X} \wt T_X^{-1})(E_{\tau(i)}) \\
&= - G_{-\theta,\rho_X}(-\theta(\al_i)) \ga'_{\tau(i)} \Ad(\wt T_X^{-1})(E_{\tau(i)}) t_i \\
&= - \ze_i \Ad(\wt T_X^{-1})(E_{\tau(i)}) t_i.
\end{align*}
Applying $\Ad(\bm \ga') \circ \theta_q^{-1} \ot \id$ to \eqref{Thetadiagram:Di:2}, we get
\eq{ \label{Thetadiagram:Di:3}
\begin{aligned}
(\id \ot D_i^{(r)})(\Psi) &= (q_i-q_i^{-1}) \ga_i \; (\Ad(\bm \ga') \circ \theta_q^{-1} \ot \id)(\Theta) \; \Ad(\wt T_X^{-1})(E_{\tau(i)}) t_i \ot t_i \; \Theta_X^{-1} \; 1 \ot t_i^{-1} \\
&= (q_i-q_i^{-1}) \ze_i \; \Psi \; \Ad(\Theta_X)\big(\Ad(\wt T_X^{-1})(E_{\tau(i)}) t_i \ot t_i\big) \; 1 \ot t_i^{-1}.
\end{aligned}
}
Thus, by \eqref{quasiRX:Cartan}, we obtain \eqref{Thetadiagram:Di} in the case $i \not\in X$.
\end{proof}

Note that
\[
\Ad(\ka_\id)(1 \ot \mfX_\la)|_{M_\mu \ot N_\nu} = q^{(\la,\mu)}1 \ot \mfX_\la|_{M_\mu \ot N_\nu} = t_\la \ot \mfX_\la|_{M_\mu \ot N_\nu}
\]
for $\la \in \Qlat^+$, $\mu,\nu \in \Plate$ and $M, N \in \mc O^+$.
Hence,
\eq{ \label{quasiKmod:formula}
\Ad(\ka_\id)(1 \ot \mfX) = \sum_{\la \in \Qlat^+} t_\la \ot \mfX_\la
}
and $\Ad(\ka_\id)(1 \ot \mfX) $  coincides with the element defined by \cite[Equation~(9.6)]{BK19}.

\begin{lemma} \label{lem:quasiKmod:properties}
We have
\begin{align}\label{quasiKmod:Di}
\begin{split}
 (\id \ot D_i^{(r)})&\big(\Ad(\ka_\id)(1 \ot \mfX)\big) =\\ = & (q_i-q_i^{-1}) \, \Ad(\ka_\id)(1 \ot \mfX) \, \Big(\ze_i \,  t_{-\theta(\al_i)} \ot \Ad(\wt T_X^{-1})(E_{\tau(i)}) -  \wb{\si_i} \, 1 \ot 1 \Big) t_i \ot 1
\end{split}
\end{align}
Moreover,
\begin{align}
\label{quasiKmod:ThetaX}
[\Ad(\ka_\id)(1 \ot \mfX),\Theta_X] = 0=[\Ad(\ka_\id)(1 \ot \mfX),\Ad(\wt T_X^{-1})(E_{\tau(i)}) t_i \ot t_i]\,.
\end{align}
\end{lemma}

\begin{proof}
Note that \eqref{quasiKmod:ThetaX} follows immediately from  \eqref{quasiKmod:formula} and Proposition~\ref{prop:quasiK:zerocomponents}.
Instead, by \eqref{quasiKmod:formula}, we have
\[
(\id \ot D_i^{(r)})\big(\Ad(\ka_\id)(1 \ot \mfX)\big) = \sum_{\la \in \Qlat^+} t_\la \ot D_i^{(r)}(\mfX_\la)
\]
Thus, by \eqref{quasiK:Bi:cpts:a}, it follows
\begin{align*}
& (\id \ot D_i^{(r)})\big(\Ad(\ka_\id)(1 \ot \mfX)\big) = \\
&= (q_i-q_i^{-1}) \sum_{\la \in \Qlat^+} t_\la \ot \big( \ze_i \, \mfX_{\la-\al_i+\theta(\al_i)} \, \Ad(\wt T_X^{-1})(E_{\tau(i)}) - \wb{\si_i} \, \mfX_{\la-\al_i} \big) \\
&= (q_i-q_i^{-1}) \Bigg( \ze_i \bigg( \sum_{\la \in \Qlat^+} t_{\la-\al_i+\theta(\al_i)} \ot \mfX_{\la-\al_i+\theta(\al_i)} \bigg) t_{\al_i-\theta(\al_i)} \ot \Ad(\wt T_X^{-1})(E_{\tau(i)}) + \\
& \hspace{25mm} - \wb{\si_i} \bigg( \sum_{\la \in \Qlat^+} t_{\la-\al_i} \ot \mfX_{\la-\al_i} \bigg) t_i \ot 1 \Bigg) \\
&= (q_i-q_i^{-1}) \Ad(\ka_\id)(1 \ot \mfX) \Big( \ze_i \, t_{\al_i-\theta(\al_i)} \ot \Ad(\wt T_X^{-1})(E_{\tau(i)}) -  \wb{\si_i} \, t_i \ot 1 \Big)
\end{align*}
The result follows.
\end{proof}

%%%%%%%%%%%%%%%%%%%%%%%%%%%

\subsection{Proof of Theorem~\ref{thm:quasiK:Delta}} \label{ss:coprod-formula-pf}
By \eqref{quasiRX:zeta1} and \eqref{quasiKmod:ThetaX}, the coproduct identity 
\eqref{quasiK:Delta} is equivalent to 
\eq{ \label{quasiK:Delta:2}
\Del(\mfX) = (\mfX \ot 1) \cdot \Psi \cdot \Ad(\ka_\id)(1 \ot \mfX).
}
By \eqref{Thetadiagram:element} and \eqref{quasiKmod:formula}, the right-hand side of \eqref{quasiK:Delta:2} belongs to $(U_q\mfb^+ \ot U_q\mfn^+)^{\mc O^+}_\Del$.
Since also $\Del(\mfX) \in (U_q\mfb^+ \ot U_q\mfn^+)^{\mc O^+}_\Del$, by \eqref{pairing:equality}, the coproduct identity \eqref{quasiK:Delta:2} is equivalent to 
\eq{ \label{quasiK:Delta:3}
\langle y \ot z, \Del(\mfX) \rangle = \langle y \ot z, (\mfX \ot 1) \cdot \Psi \cdot \Ad(\ka_\id)(1 \ot \mfX) \rangle
}
for $y,z \in U_q\mfn^-$.
By linearity it suffices to consider the case $z = F_{i_1} F_{i_2} \cdots F_{i_\ell}$ for all $(i_1,\ldots,i_\ell) \in \indI^\ell$, $\ell \in \Z_{\geqslant 0}$.
We do this by induction on $\ell$.\\ 

Consider the case $\ell=0$. Denote by $P^+_0$ the projection from $(U_q\mfb^+ \ot U_q\mfn^-)^{\mc O^+}_\Del$ to the direct summand $U_q\mfn^+ \ot \F$.
By \eqref{Thetadiagram:element}, $\Ad(\bm \ga') \circ \theta_q^{-1} \ot \id $ maps $\sum_{\la \in \Qlat, \mu \in \Qlat_X \atop \la + \mu = \nu} \Theta_{I,\la}\wb{\Theta_{X,\mu}}$ into $(U_q\mfn^+)_{-\theta(\nu)} t_\nu \ot (U_q\mfn^+)_\nu$ for $\nu \in \Qlat^+$, so that $P^+_0(\Psi) = 1 \ot 1$.
Also, by \eqref{quasiKmod:formula} we have
\[
P^+_0\big(\Ad(\ka_\id)(1 \ot \mfX)\big) = \sum_{\la \in \Qlat^+} P^+_0\big( t_\la \ot \mfX_\la \big) = 1 \ot 1.
\]
Therefore, we obtain
\[
\langle y \ot 1, (\mfX \ot 1) \cdot \Psi \cdot \Ad(\ka_\id)(1 \ot \mfX) \rangle = \langle y \ot 1,\mfX \ot 1 \rangle = \langle y,\mfX \rangle = \langle y \ot 1, \Del(\mfX) \rangle\, .
\]
Assume \eqref{quasiK:Delta:3} is satisfied for all $y \in U_q\mfn^+$ and all monomials 
$z = F_{i_1} F_{i_2} \cdots F_{i_\ell}$ with $\ell \geqslant 0$.
It remains to prove that
\eq{ \label{quasiK:Delta:4}
\langle y \ot zF_i, \Del(\mfX) \rangle = \langle y \ot z F_i, (\mfX \ot 1) \cdot \Psi \cdot \Ad(\ka_\id)(1 \ot \mfX) \rangle
}
for any $i\in \indI$.
By \eqref{pairing:rel1} and \eqref{Di:pairing}, we have
\[
\langle y \ot z F_i,\Del(\mfX)\rangle = \langle y z F_i, \mfX \rangle = (q_i^{-1}-q_i)^{-1} \langle y z, D_i^{(r)}(\mfX) \rangle.
\]
Thus, by \eqref{quasiK:Bi:3a}, we get
\begin{align*}
\langle y \ot z F_i,\Del(\mfX)\rangle 
&= \Big\langle y z, \mfX\big(\wb{\si_i} - \ze_i \Ad(\wt T_X^{-1})(E_{\tau(i)}) \big) \Big\rangle \\
&= \Big\langle y \ot z, \Del(\mfX)\big(\wb{\si_i} 1 \ot 1 - \ze_i \Del\big(\Ad(\wt T_X^{-1})(E_{\tau(i)})\big) \big) \Big\rangle.
\end{align*}
By induction, the LHS of \eqref{quasiK:Delta:4} gives
\[
\langle y \ot z F_i,\Del(\mfX)\rangle = \Big\langle y \ot z, (\mfX \ot 1) \cdot \Psi \cdot \Ad(\ka_{\id})(1 \ot \mfX) \big(\wb{\si_i} 1 \ot 1 - \ze_i \Del\big(\Ad(\wt T_X^{-1})(E_{\tau(i)})\big) \big) \Big\rangle.
\]
while the RHS of \eqref{quasiK:Delta:4}, by \eqref{pairing:rel3}, \eqref{Di:pairing}, and \eqref{Di:def}, gives
\begin{align*}
& \langle y \ot z F_i, (\mfX \ot 1) \cdot \Psi \cdot \Ad(\ka_\id)(1 \ot \mfX) \rangle = \\
& \qq = \langle y \ot z F_i, (\mfX \ot 1) \cdot \Psi \cdot \Ad(\ka_\id)(1 \ot \mfX) t_i^{-1} \ot 1 \rangle = \\
& \qq = (q_i^{-1}-q_i)^{-1} \Big\langle y \ot z, (\id \ot D_i^{(r)})\big(\mfX \ot 1 \; \Psi \; \Ad(\ka_\id)(1 \ot \mfX) \big) t_i^{-1} \ot 1 \Big\rangle \\
& \qq = (q_i^{-1}-q_i)^{-1} \Big\langle y \ot z, (\mfX \ot 1) \; (\id \ot D_i^{(r)})\big( \Psi \; \Ad(\ka_\id)(1 \ot \mfX) \big) t_i^{-1} \ot 1 \Big\rangle.
\end{align*}
Therefore, the desired identity \eqref{quasiK:Delta:4} reduces to
\eq{ \label{quasiK:Delta:5}
\begin{aligned}
& \Psi \; \Ad(\ka_{\id})(1 \ot \mfX) \; \big(\wb{\si_i} 1 \ot 1 - \ze_i \Del\big(\Ad(\wt T_X^{-1})(E_{\tau(i)})\big) = \\
& \qq = (q_i^{-1}-q_i)^{-1} (\id \ot D_i^{(r)})\big( \Psi \; \Ad(\ka_\id)(1 \ot \mfX) \big)  t_i^{-1} \ot 1.
\end{aligned}
}
By \eqref{Thetadiagram:Di} and \eqref{quasiKmod:Di},
if $i \in X$, then \eqref{quasiK:Delta:5} is satisfied since $\ze_i = 0=\si_i$.
If $i \not\in X$, then by \eqref{quasiKmod:ThetaX}, we have
\begin{align*}
& (q_i^{-1}-q_i)^{-1} (\id \ot D_i^{(r)})(\Psi) \Ad((1 \ot t_i)\ka_\id)(1 \ot \mfX) \, t_i^{-1} \ot 1 = \\
&\qq = - \ze_i \Psi \; \Ad\big(\Theta_X(\wt T_X^{-1} \ot 1)\big) (E_{\tau(i)} \ot 1) \Ad((t_i \ot t_i)\ka_\id)(1 \ot \mfX)  \\
&\qq = - \ze_i \Psi \; \Theta_X \; \Ad(\wt T_X^{-1})(E_{\tau(i)}) \ot 1 \; \Ad((t_i \ot t_i)\ka_\id)(1 \ot \mfX) \Theta_X^{-1} \\
&\qq = - \ze_i \Psi \; \Theta_X \Ad(\ka_\id)(1 \ot \mfX) \; \big( \Ad(\wt T_X^{-1})(E_{\tau(i)}) \ot 1 \big) \; \Theta_X^{-1} \\
&\qq = - \ze_i \Psi \; \Ad(\ka_\id)(1 \ot \mfX) \Ad\big(\Theta_X(\wt T_X^{-1} \ot 1)\big)(E_{\tau(i)} \ot 1).
\end{align*}
Hence, \eqref{Di:def} implies
\begin{align*}
&  (q_i-q_i^{-1})^{-1} (\id \ot D_i^{(r)})\big( \Psi \; \Ad(\ka_\id)(1 \ot \mfX) \big)  t_i^{-1} \ot 1 = \\
& = \Psi \; \Ad(\ka_\id)(1 \ot \mfX)  \Big(\ze_i \Ad\big(\Theta_X(\wt T_X^{-1} \ot 1)\big)(E_{\tau(i)} \ot 1) + \ze_i t_{-\theta(\al_i)} \ot \Ad(\wt T_X^{-1})(E_{\tau(i)}) - \wb{\si_i} 1 \ot 1 \Big) .
\end{align*}
Therefore, \eqref{quasiK:Delta:5} further reduces to
\eq{ \label{quasiK:Delta:6}
\Del\big(\Ad(\wt T_X^{-1})(E_{\tau(i)}) = \Ad\big(\Theta_X(\wt T_X^{-1} \ot 1)\big)(E_{\tau(i)} \ot 1)  +  t_{-\theta(\al_i)} \ot \Ad(\wt T_X^{-1})(E_{\tau(i)}).
}
Let $j \in X$.
Applying $\Ad(\wt T_X^{-1}) \circ \tau$ to $E_i F_j t_j = q^{-(\al_i,\al_j)} F_j t_j E_i$, we obtain $\Ad(\wt T_X^{-1})(E_{\tau(i)}) E_j = q^{-(\al_i,\al_j)} E_j \Ad(\wt T_X^{-1})(E_{\tau(i)})$ and
\begin{align*}
(t_{-\theta(\al_i)} \ot \Ad(\wt T_X^{-1})(E_{\tau(i)})) \; (F_j \ot E_j) 
&= q^{(\theta(\al_i)-\al_i,\al_j)} (F_j \ot E_j) \; (t_{-\theta(\al_i)} \ot \Ad(\wt T_X^{-1})(E_{\tau(i)})) \\
&= (F_j \ot E_j) \; (t_{-\theta(\al_i)} \ot \Ad(\wt T_X^{-1})(E_{\tau(i)})).
\end{align*}
Hence, $t_{-\theta(\al_i)} \ot \Ad(\wt T_X^{-1})(E_{\tau(i)}) $ commutes with $\Theta_X$ and, by \eqref{quasiRX:factorized:1}, we conclude that
\begin{align*}
\Del\big(\Ad(\wt T_X^{-1})(E_{\tau(i)})\big) &= \Ad(\Theta_X(\wt T_X^{-1} \ot \wt T_X^{-1}))(E_{\tau(i)} \ot 1 + t_{\tau(i)} \ot E_{\tau(i)}) \\
&= \Ad(\Theta_X)\big( \Ad(\wt T_X^{-1})(E_{\tau(i)}) \ot 1 + t_{-\theta(\al_i)} \ot \Ad(\wt T_X^{-1})(E_{\tau(i)}) \big) \\
&= \Ad\big(\Theta_X(\wt T_X^{-1} \ot 1)\big) (E_{\tau(i)} \ot 1) + t_{-\theta(\al_i)} \ot \Ad(\wt T_X^{-1})(E_{\tau(i)})\,.
\end{align*}
The result follows.

%%%%%%%%%%%%%%%%%%%%%%%%%%%%%%%%%%%%%%%%%%%%%%%%%%%%%%%%%%%%%%%%%

\section{Universal K-matrices} \label{s:universalk} 

In this section, we introduce the {\em standard} universal K-matrix and derive its key properties.
A further modification in terms of a multiplicative difference of two modified diagrammatic half-balances, corresponding to a pair of generalized Satake diagrams, yields a rich theory of new {\em modified} universal K-matrices. 
Among those, in special cases, certain choices are more convenient or natural than others.
In particular, when the two diagrams coincide, this yields a natural
interpretation of the quasi-K-matrix as a universal K-matrix.
For quantum groups of finite type, this recovers the Balagovi\'c-Kolb universal K-matrix and their formalism (cf.~Section \ref{sec:GSatmin:finite}). 
In Section \ref{s:affine}, we shall outline the applications of this approach in the theory of
quantum affine algebras.\\ 

Throughout the section, we fix $(X,\tau) \in \GSat(A)$, $(\bm \ga,\bm \si) \in \Ga_q \times \Si_q$, we assume that $\Plate$ is $\tau$-compatible, and we consider the associated quantum pseudo-fixed-point subalgebra $U_q\mfk_{\bm \ga,\bm \si}\subset U_q\mfg$.

%%%%%%%%%%%%%%%%%%%%%%%%%%

\subsection{The inverse of the quasi-K-matrix}
It is convenient for us to work with the {\em inverse} of the quasi-K-matrix constructed in Section \ref{s:quasiK} (cf.\ Theorem \eqref{thm:quasiK:2}). Thus, we set
\eq{ \label{tildek:def}
\wtKM = \wtKM_{\bm \ga,\bm \si}\coloneqq \wb{\mfX_{\bm \ga,\bm \si}} = \mfX_{\bm \ga',\bm \si'}^{-1},
}
where the parameters $(\bm \ga',\bm \si')$ are defined in \eqref{prime:def}.
Recall the injective algebra homomorphism $f:  U_q\mfk_{\bm \ga,\bm \si}\to U_q\mfg$ defined by $f(u)=u$ if $u \in U_q\mfg_X U_q\mfh^\theta$ and $f(B_{i;\ga_i,\si_i}) = \wb{B_{i;\ga'_i,\si'_i}}$ for all $i \not\in X$.
We have the following
\begin{lemma} \label{prop:tildek}
The operator $\wtKM\in(U_q\mfn^+)^{\mc O^+}$ is the unique element 
with $\wtKM_0=1$ satisfying the intertwining equation
\eq{ \label{tildek:intw}
\wtKM u = f(u) \wtKM
}
for any $u\in U_q\mfk_{\bm \ga,\bm \si}$.
Moreover, it satisfies the coproduct identity
\eq{ \label{tildek:Delta}
\Del(\wtKM) = {R}_{X,\tau}^{-1} \cdot (1 \ot \wtKM) \cdot (\Ad(\bm \ga) \circ \theta_q^{-1} \ot \id)({R}) \cdot (\wtKM \ot 1).
}
\end{lemma}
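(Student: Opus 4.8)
The statement has three parts: uniqueness, the intertwining relation \eqref{tildek:intw}, and the coproduct formula \eqref{tildek:Delta}. The first two are essentially a restatement of Theorem \ref{thm:quasiK:2} under the substitution $\wt k = \wb{\mfX_{\bm \ga,\bm \si}}$, so the plan is to translate the properties of $\mfX$ through the bar involution, and then derive the coproduct formula by applying $\wb{\phantom{x}} \ot \wb{\phantom{x}}$ to the coproduct formula of Theorem \ref{thm:quasiK:Delta}, carefully tracking how the bar involution interacts with the operators $\Theta$, $\Theta_X$, $\ka_\id$, $\theta_q$ and $\bm \ga'$.

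\textbf{Uniqueness and intertwining.} First I would recall from \eqref{tildek:def} that $\wt k = \wb{\mfX_{\bm \ga,\bm \si}}$, which by Theorem \ref{thm:quasiK:2} and the invertibility of the bar involution is the unique element of $(U_q\mfn^+)^{\mc O^+}$ with constant term $1$ (note $\wb{\phantom{x}}$ preserves $(U_q\mfn^+)_\la$ and fixes $1$) such that the bar-transformed system \eqref{tildek:Bi}--\eqref{tildek:fixed} holds; indeed, applying the bar involution to \eqref{quasiK:Bi}--\eqref{quasiK:fixed} and using that $\wb{\wb{B_{i;\ga_i,\si_i}}} = B_{i;\ga_i,\si_i}$ and $\wb{B_{i;\ga'_i,\si'_i}} = f(B_{i;\ga_i,\si_i})$ (by the definition of $f$ at the start of Section \ref{s:quasiK}, and also using that the bar involution fixes $U_q\mfg_XU_q\mfh^\theta$ pointwise) gives exactly $\wt k\, B_{i;\ga_i,\si_i} = f(B_{i;\ga_i,\si_i})\, \wt k$ and $\wt k\, u = u\, \wt k = f(u)\, \wt k$ for $u \in U_q\mfg_XU_q\mfh^\theta$. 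Since $\{B_{i;\ga_i,\si_i}\}_{i \in \indI \backslash X} \cup U_q\mfg_X U_q\mfh^\theta$ generates $U_q\mfk_{\bm \ga,\bm \si}$ and $f$ is an algebra homomorphism on it, \eqref{tildek:intw} follows. Invertibility of $\wt k$ comes from $\wt k = \mfX_{\bm \ga',\bm \si'}^{-1}$, the second equality in \eqref{tildek:def}, which itself follows from the uniqueness clause of Theorem \ref{thm:quasiK:2} applied to $\wb{\mfX_{\bm \ga,\bm \si}}$ and $\mfX_{\bm \ga',\bm \si'}$ (here one uses that $'$ is an involution on $\Ga_q \times \Si_q$, and that $\wb{\wb{\si_i}} = \si_i$).

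\textbf{The coproduct formula.} For \eqref{tildek:Delta} I would start from the formula of Theorem \ref{thm:quasiK:Delta} written for the parameter tuple $(\bm \ga',\bm \si')$ (so that the quasi-k-matrix appearing is $\mfX_{\bm \ga',\bm \si'} = \wt k^{-1}$ and the decoration $'$ turns $\bm \ga'$ into $\bm \ga'' = \bm \ga$):
\[
\Del(\wt k^{-1}) = (\wt k^{-1} \ot 1) \cdot (\Ad(\bm \ga) \circ \theta_q^{-1} \ot \id)(\Theta) \cdot \Ad(\ka_\id)(1 \ot \wt k^{-1}) \cdot \Theta_X^{-1}.
\]
Applying $\wb{\phantom{x}} \ot \wb{\phantom{x}}$ to both sides and using that it is an algebra automorphism of $(U_q\mfg^{\ot 2})^{\mc O^+}$, $\Del(\wb{u}) = (\wb{\phantom{x}} \ot \wb{\phantom{x}})(\Del(u))$, $\wb{\Theta} = \Theta^{-1}$ (from \eqref{quasiR:inverse}) and similarly $\wb{\Theta_X} = \Theta_X^{-1}$, I would get $\Del(\wt k)^{-1}$ on the left; on the right one needs to know how $\wb{\phantom{x}} \ot \wb{\phantom{x}}$ commutes past $(\Ad(\bm \ga) \circ \theta_q^{-1} \ot \id)(\Theta)$ and past $\Ad(\ka_\id)(1 \ot \wt k)$. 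For the latter, recall $\Ad(\ka_\id)(1 \ot \wt k) = \sum_\la t_\la \ot \wt k_\la$ (as in \eqref{quasiKmod:formula} with $\mfX$ replaced by $\wt k$; here $\wt k \in (U_q\mfn^+)^{\mc O^+}$ is supported on $(\Qlat^+)^{-\theta}$), and the bar involution acts diagonally sending this to $\sum_\la t_{-\la} \ot \wb{\wt k_\la}$. For the former factor, the key computation is to relate $(\wb{\phantom{x}} \ot \wb{\phantom{x}}) \circ (\Ad(\bm \ga) \circ \theta_q^{-1} \ot \id)$ applied to $\Theta$ back to $(\Ad(\bm \ga) \circ \theta_q^{-1} \ot \id)$ applied to $\wb{\Theta} = \Theta^{-1}$, using \eqref{hatT:bar} to move the bar past $\Ad(T_{X,\tau})$ inside $\theta_q^{-1}$, the fact that $\om$ commutes with the bar involution, and that $\bm \ga$ is a group-like element on which bar acts by $\bm \ga \mapsto \wb{\bm \ga}$. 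Inverting the resulting identity then yields \eqref{tildek:Delta}, after recognizing the weight-zero correction factors that appear as precisely the Cartan factor $\ka_{\theta(X,\tau)}$ that converts $\wt R_X$ into $R_{X,\tau}$, i.e.\ using \eqref{RX:def} and \eqref{tildeR:def}.

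\textbf{Main obstacle.} The genuinely delicate step is the bookkeeping in the last paragraph: tracking all the weight-dependent $q$-powers and Cartan corrections that arise when pushing the bar involution past the twisted $\Theta$ and past $\Ad(\ka_\id)(1 \ot \wt k)$, and verifying that they reassemble exactly into $R_{X,\tau}^{-1}$ on the left and $(\Ad(\bm \ga) \circ \theta_q^{-1} \ot \id)(R)$ on the right rather than their quasi-counterparts. Concretely, one must check that the extra factor $\ka_{\theta(X,\tau)}$ in \eqref{RX:def} and the factor $\ka_\id$ in $R = \ka_\id \wt R$ (Section \ref{sec:standardR}) are produced precisely by the discrepancy between $\Theta$ and $\wb{\Theta}$ under the twist. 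The cleanest way to organize this is probably to first prove \eqref{tildek:Delta} in the equivalent form
\[
\Del(\wt k) = \Psi' \cdot (1 \ot \wt k) \cdot (\wt k \ot 1), \qquad \Psi' := {R}_{X,\tau}^{-1} \cdot (1 \ot \wt k)^{-1} \cdot (\Ad(\bm \ga) \circ \theta_q^{-1} \ot \id)({R}) \cdot (1 \ot \wt k),
\]
and then use the pairing argument of Theorem \ref{thm:quasiK:Delta} (relying on \eqref{pairing:equality}) directly for $\wt k$, mirroring the inductive computation there with the Lusztig skew derivations $D_i^{(r)}$ applied to $\wt k$ via \eqref{tildek:intw}; this avoids having to separately control the bar involution on completed tensor products and instead re-runs the proof of Theorem \ref{thm:quasiK:Delta} with the obvious modifications for the inverse/bar-transformed data.
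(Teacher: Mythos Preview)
Your treatment of the intertwining relation and uniqueness is fine and matches the paper: apply the bar involution to the system \eqref{quasiK:Bi}--\eqref{quasiK:fixed} characterizing $\mfX_{\bm\ga,\bm\si}$ and invoke Theorem~\ref{thm:quasiK:2}.

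For the coproduct formula, however, there is a genuine error. You assert that $\Del(\wb{u}) = (\wb{\phantom{x}}\ot\wb{\phantom{x}})(\Del(u))$, but this is false: the bar involution is \emph{not} a coalgebra map. Indeed, for $u=E_i$ one has $(\wb{\phantom{x}}\ot\wb{\phantom{x}})(\Del(E_i)) = E_i\ot 1 + t_i^{-1}\ot E_i \ne \Del(E_i) = \Del(\wb{E_i})$. The correct relation is precisely \eqref{quasiR:intw}: the defect is conjugation by $\Theta$. So the step ``apply $\wb{\phantom{x}}\ot\wb{\phantom{x}}$ to both sides'' does not produce $\Del(\wt k)^{-1}$ on the left and your subsequent bookkeeping cannot be salvaged as written. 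Your fallback (re-running the pairing argument of Theorem~\ref{thm:quasiK:Delta} for $\wt k$) would work in principle but is far more laborious than necessary.

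The paper's route is much shorter and you had already set it up: having written \eqref{quasiK:Delta} for the primed parameters as $\Del(\wt k^{-1}) = (\wt k^{-1}\ot 1)\cdot(\Ad(\bm\ga)\circ\theta_q^{-1}\ot\id)(\Theta)\cdot\Ad(\ka_\id)(1\ot\wt k^{-1})\cdot\Theta_X^{-1}$, simply \emph{invert} this identity directly (no bar involution). This gives
\[
\Del(\wt k) = \wt R_X^{-1}\cdot \Ad(\ka_\id)(1\ot\wt k)\cdot(\Ad(\bm\ga)\circ\theta_q^{-1}\ot\id)(\wt R)\cdot(\wt k\ot 1),
\]
using $\Theta_X = \wt R_X^{-1}$ and $\Theta^{-1}=\wt R$. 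The passage from this to \eqref{tildek:Delta} is then a short Cartan computation, not a delicate bookkeeping exercise: since $\wt k$ is supported on $(\Qlat^+)^{-\theta}$ one has $\Ad(\ka_{\id+\theta})(1\ot\wt k) = 1\ot\wt k$, hence $\Ad(\ka_\id)(1\ot\wt k) = \Ad(\ka_{-\theta})(1\ot\wt k) = \ka_{-\theta}(1\ot\wt k)\ka_\theta$; the left factor $\ka_{-\theta}$ combines with $\wt R_X^{-1}$ to give $R_{X,\tau}^{-1}$ by \eqref{RX:def}, and the right factor $\ka_\theta$ combines with $(\Ad(\bm\ga)\circ\theta_q^{-1}\ot\id)(\wt R)$ to give $(\Ad(\bm\ga)\circ\theta_q^{-1}\ot\id)(R)$ since $(\theta_q^{-1}\ot\id)(\ka_\id)=\ka_\theta$.
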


\begin{proof}
By Theorem \ref{thm:quasiK:2}, it follows that $\wtKM$ satisfies \eqref{tildek:intw}.
Then, from \eqref{quasiK:Delta}, we have
\begin{align*}
\Del(\wtKM) &= \wtR_X^{-1} \; \Ad(\ka_\id)(1 \ot \wtKM) \; (\Ad(\bm \ga) \circ  \theta_q^{-1} \ot \id)(\wtR) \; (\wtKM \ot 1) \\
&= \wtR_X^{-1} \; \Ad(\ka_{-\theta})(1 \ot \wtKM) \; (\Ad(\bm \ga) \circ  \theta_q^{-1} \ot \id)(\wtR) \; (\wtKM \ot 1) \\
&= {R}_{X,\tau}^{-1} \; (1 \ot \wtKM) \; (\Ad(\bm \ga) \circ  \theta_q^{-1} \ot \id)({R}) \; (\wtKM \ot 1).
\end{align*}
where the first equality follows from the identity $\Ad(\ka_{\theta+\id})(1\ot x) = 1 \ot x$ for any $x \in (U_q\mfn^+)_\la$ and $\la \in \Qlat^{-\theta}$, while the third equality follows from $( \theta_q^{-1} \ot \id)(\ka_\id) = \ka_\theta$ (cf.~Remark~\ref{rmk:theta-extend-to-completion}).
\end{proof}

\begin{remark}
The choice of $\ka_{\theta}$ in the definition of $R_{X,\tau}$ (cf.~Definition \ref{RX:def}) is 
therefore instrumental to obtain \eqref{tildek:Delta}, in that it absorbs the Cartan corrections that
naturally arise in the coproduct identity of the quasi-K-matrix. \hfill \rmkend
\end{remark}

%%%%%%%%%%%%%%%%%%%%%%%%%%%

\subsection{Quantum pseudo-involutions on $ U_q\mfk_{\bm \ga,\bm \si}$}
In the following, we shall use the quantum pseudo-involution $\theta_q$ as defining a new
intertwining equation. To this end, we need to describe its action on
$U_q\mfk_{\bm \ga,\bm \si}$. Recall that we regard $\bm \ga$ as an element in
$(U_q\mfh)^{\mc O^+_\int}$.  We have the following

\begin{proposition} \label{prop:ups:intw}
	For any $u\in U_q\mfk_{\bm \ga,\bm \si}$, it holds $\theta_q^{-1}(u)=\Ad(\bm \ga^{-1})(f(u))$.
\end{proposition}

\begin{proof}
	For $u \in U_q\mfg_X U_q\mfh^\theta$, we have $\Ad(\bm \ga^{-1})(u) = f(u) = u =  \theta_q^{-1}(u)$. It remains to prove that, for $i \not\in X$,
	\eq{
		\label{ups:intw:Bi} \big(  \theta_q \circ \Ad(\bm \ga^{-1})\big)(\wb{B_{i;\ga'_i,\si'_i}}) = B_{i;\ga_i,\si_i}.
	}
	Note that
	\eq{ \label{ups:intw:Fi}
		\big( \theta_q \circ \Ad(\bm \ga^{-1})\big)(F_i) = \ga_i  \theta_q(F_i).
	}
	Moreover, by \eqref{prime:def} and \eqref{hatT:bar}, we have
	\[
	\wb{\ga'_i  \theta_q(F_i)} = - (-1)^{\al_i(2\rho^\vee_X)} \ga_{\tau(i)} \wb{\Ad(T_{X,\tau})(E_{\tau(i)})} 
	= - \ga_{\tau(i)} \Ad(T_{X,\tau}^{-1})(E_{\tau(i)}) = \ga_{\tau(i)}  \theta_q^{-1}(F_i)\,.
	\]
	Therefore,
	\eq{ \label{ups:intw:thetaqFi}
		\big( \theta_q \circ \Ad(\bm \ga^{-1})\big) \big( \wb{ \ga'_i \theta_q(F_i)} \big) = \ga_{\tau(i)}\big( \theta_q \circ \Ad(\bm \ga^{-1})\big)( \theta_q^{-1}(F_i)) = F_i.
	}
	Finally, 
	\eq{ \label{ups:intw:Cartan}
		\big( \theta_q \circ \Ad(\bm \ga^{-1})\big)(\si_i t_i) = \si_i  \theta_q(t_i) = \si_i t_{\theta(\al_i)} = \si_i t_i^{-1}
	}
	since $\si_i =0$ if $\theta(\al_i) \ne -\al_i$.
	Combining \eqref{ups:intw:Fi}-\eqref{ups:intw:Cartan}, we obtain \eqref{ups:intw:Bi}.
\end{proof}

%%%%%%%%%%%%%%%%%%%%%%%%%%%

\subsection{The standard universal K-matrix $\KM{X,\tau}$}\label{sec:basic-k}
We introduce a subtle correction of the operator $\wtKM$, which reveals crucial in the
following.

\begin{definition}\label{ss:basic-k}
The \emph{standard} universal K-matrix is the operator in $(U_q\mfb^+)^{\mc O^+}$ given by
\begin{align}
\KM{X,\tau} \coloneqq \bm \ga^{-1} \wtKM 
\end{align}
where $\wtKM$ is defined in \eqref{tildek:def}.
The \emph{standard} twisting operator is the algebra automorphism of $U_q\mfg$ given by
$\psi_{X,\tau}\coloneqq  \theta_q^{-1}$.
\hfill\defnend 
\end{definition}
We prove the first main result of the paper.
\begin{theorem} \label{thm:kX}
The standard universal K-matrix $ \KM{X,\tau}$
satisfies the intertwining equation 
\begin{align}
\label{kXtau:intw} \KM{X,\tau} u &= \psi_{X,\tau}(u) \KM{X,\tau}
\end{align}
for any $u\in U_q\mfk_{\bm \ga,\bm \si}$ and the coproduct identity
\begin{align}
\label{kXtau:Delta} \Del(\KM{X,\tau}) &= {R}_{X,\tau}^{-1} \cdot (1 \ot \KM{X,\tau}) \cdot \big( \psi_{X,\tau} \ot \id \big)({R}) \cdot (\KM{X,\tau} \ot 1)\,.
\end{align}
\end{theorem}

\begin{proof}[Proof of Theorem \ref{thm:kX}.]
Combining \eqref{tildek:Bi} with Proposition~\ref{prop:ups:intw}, we immediately obtain \eqref{kXtau:intw}.
To prove \eqref{kXtau:Delta}, note that $\Del(\bm \ga) = \bm \ga \ot \bm \ga$ and \eqref{tildek:Delta} imply
\[
\Del(\KM{X,\tau}) = (\bm \ga^{-1} \ot \bm \ga^{-1}) \; {R}_{X,\tau}^{-1} \; (1 \ot \wtKM) \; \big(\Ad(\bm \ga) \circ  \theta_q(X,\tau)^{-1} \ot \id \big)({R}) \; (\wtKM \ot 1).
\]
By \eqref{quasiRX:Cartan0}, $\bm \ga \ot \bm \ga$ commutes with $R_{X,\tau} = \ka_{\theta(X,\tau)} \Theta_X^{-1}$, which completes the proof.
\end{proof}

%%%%%%%%%%%%%%%%%%%%%%%%%%%

\subsection{The universal K-matrices $\KM{Y,\eta}$} \label{sec:parabolick}

The twisting operator $\psi_{X,\tau}=\theta_q^{-1}$ is in general quite a complicated automorphism,
whose pullback functor is not easily described. It is therefore convenient to introduce a further
modification of the pair $(\KM{X,\tau}, \psi_{X,\tau})$ in terms of an {\em auxiliary} generalized Satake diagram $(Y,\eta)$ which yields a  simpler twisting operator. This however requires to restrict to integrable category $\mc O^+$ modules.

\begin{definition}\label{defn:mod-k}
For any $(Y,\eta) \in \GSat(A)$ such that $\Plate$ is $\eta$-compatible, we consider the operator in $(U_q\mfg)^{\mc O^+_\int}$ given by
\begin{align}\label{kYeta:def}
	\KM{Y,\eta} \coloneqq  (T_{Y,\eta}^{-1} \, T_{X,\tau})\cdot\KM{X,\tau} = (T_{Y,\eta}^{-1} \, T_{X,\tau})\cdot \bm \ga^{-1}\cdot \wtKM
\end{align}
and the algebra automorphism of $U_q\mfg$ given by
\begin{align}\label{psiYeta:def}
\psi_{Y,\eta}\coloneqq  \Ad( T_{Y,\eta}^{-1} T_{X,\tau} ) \circ \psi_{X,\tau}=\theta_q(Y,\eta)^{-1}
\circ\eta\circ\tau =\Ad(T_{Y,\eta})^{-1}\circ\omega\circ\tau\,,
\end{align}
where $\theta_q(Y,\eta)$ denotes the quantum pseudo-involution associated to $(Y,\eta)$.
\hfill\defnend
\end{definition}

Note that, in the case $(Y,\eta)=(X,\tau)$, Definitions \ref{ss:basic-k} and \ref{defn:mod-k} 
yield the same operators and there is no clash of notations.\\

Our next main result is that the element $\KM{Y,\eta}$ is a universal K-matrix for $U_q\mfk_{\bm \ga,\bm \si}$ with respect to the twisting operator $\psi_{Y,\eta}$.
More precisely, following Definition \ref{def:annular-bialg-twist-pair}, we shall prove 
that $(\psi_{Y,\eta}, R_{Y, \eta})$ is a twist pair, $(U_q\mfg, R, \psi_{Y,\eta}, R_{Y, \eta}, \KM{Y,\eta})$ is a cylindrical bialgebra, and $U_q\mfk_{\bm \ga,\bm \si}$ is a cylindrically invariant coideal subalgebra. 
These results will be proved in Proposition~\ref{thm:twistpair} and Theorem~\ref{thm:kYeta:int-Delta}, respectively.

%%%%%%%%%%%%%%%%%%%%%%%%%%%

\subsection{The twist pair $(\psi_{Y,\eta},R_{Y,\eta})$}

We first prove that $(\psi_{Y,\eta},R_{Y,\eta})$ is a twist pair for the quasitriangular 
bialgebra $(U_q\mfg,\Del,\eps,R)$. This amounts to proving that $\psi_{Y,\eta}$ is an isomorphism of quasitriangular bialgebras
\eq{ \label{psi:iso:braidedbialgebras}
	\psi_{Y,\eta}\colon (U_q\mfg , \Del^\op, \eps, R_{21})\; {\longrightarrow} \; (U_q\mfg, \Ad(R_{Y,\eta}) \circ \Del, \eps, (R_{Y,\eta})_{21}\cdot R\cdot R_{Y,\eta}^{-1})\,.
}

\begin{proposition} \label{thm:twistpair}
	The following relations hold: 
	\begin{align}\label{psi:Del}
		(\psi_{Y,\eta} \ot \psi_{Y,\eta}) \, \circ \, \Del^\op= \Ad(R_{Y,\eta}) \circ \Del \circ \psi_{Y,\eta}\,,
		\qq \eps \circ \psi_{Y,\eta} = \eps\,,
	\end{align}
	and
	\begin{align}\label{psi:R}
	 (\psi_{Y,\eta} \ot \psi_{Y,\eta})(R_{21})= ({R}_{Y,\eta})_{21} \cdot  R \cdot  {R}_{Y,\eta}^{-1}. 
	\end{align}
\end{proposition}

\begin{proof}
	Since the diagram automorphisms are bialgebra automorphisms and $\om$ is a coalgebra antiautomorphism satisfying \eqref{R:symmetries}, the identities \eqref{psi:Del}-\eqref{psi:R}
	reduce to
	\begin{align}
	\label{T:Del} \Ad(T_{Y,\eta}^{-1} \ot T_{Y,\eta}^{-1}) \, \circ \, \Del &= \Ad(R_{Y,\eta}) \circ \Del \circ \Ad(T_{Y,\eta}^{-1}), \\
	\label{T:eps} \eps \circ \Ad(T_{Y,\eta}) &= \eps, \\
	\label{T:R} \Ad(T_{Y,\eta}^{-1} \ot T_{Y,\eta}^{-1})(R) &= ({R}_{Y,\eta})_{21} \cdot  R \cdot  {R}_{Y,\eta}^{-1}.
	\end{align}
The identity \eqref{T:eps} follows from \eqref{hatT:rootspace} 
and the fact that $\Ad(T_{Y,\eta})(t_h) = t_{w_Y(h)}$ for $h \in \Qlat^\vee_\ext$.
The identities \eqref{T:Del} and \eqref{T:R} follow from \eqref{RX:factorized}.
\end{proof}

%%%%%%%%%%%%%%%%%%%%%%%%%%%

\subsection{Properties of $\KM{Y,\eta}$} \label{sec:intw}
We prove that the operator $\KM{Y,\eta}$ is indeed a universal K-matrix with respect
to the twist pair $(\psi_{Y,\eta}, R_{Y,\eta})$.

\begin{theorem} \label{thm:kYeta:int-Delta}
The operator $\KM{Y,\eta}$ satisfies the intertwining equation
\eq{ 
\label{kYeta:intw}{\KM{Y,\eta}}\cdot u = \psi_{Y,\eta}(u)\cdot {\KM{Y,\eta}}\,,
}
for any $u\in U_q\mfk_{\bm \ga,\bm \si}$, and the coproduct identity
\eq{
	\label{kYeta:Delta} \Del(\KM{Y,\eta}) = {R}_{Y,\eta}^{-1} \; (1 \ot{\KM{Y,\eta}}) \; (\psi_{Y,\eta} \ot \id)({R}) \; (\KM{Y,\eta} \ot 1).
}
\end{theorem}

\begin{proof}
From \eqref{kXtau:intw}, one has
\[
\KM{Y,\eta}\cdot u = T_{Y,\eta}^{-1} \, T_{X,\tau} \,  \theta_q^{-1}(u) \, \KM{X,\tau} = \big( \Ad(T_{Y,\eta}^{-1}) \circ \om \circ \tau \big)(u) \, T_{Y,\eta}^{-1} \, T_{X,\tau} \, \KM{X,\tau} = \psi_{Y,\eta}(u) \,{\KM{Y,\eta}}\,,
\]
for any $u \in U_q\mfk_{\bm \ga,\bm \si}$. Then, from \eqref{kXtau:Delta}, one has
	\begin{align*}
	\Del(\KM{Y,\eta}) &= \Del\big( T_{Y,\eta}^{-1}T_{X,\tau}^{} \big) {R}_{X,\tau}^{-1} \; (1 \ot \KM{X,\tau}) \; \big(  \theta_q(X,\tau)^{-1} \ot \id \big)({R}) \; (\KM{X,\tau} \ot 1) \\
	&= \Del(T_{Y,\eta})^{-1} \; (T_{X,\tau} \ot T_{X,\tau}) \; (1 \ot \KM{X,\tau}) \; \big(  \theta_q(X,\tau)^{-1} \ot \id \big)({R}) \; (\KM{X,\tau} \ot 1) \\
	&= \Del(T_{Y,\eta})^{-1} \; (1 \ot  T_{X,\tau} \KM{X,\tau}) \; \big( \Ad( T_{X,\tau} ) \circ  \theta_q(X,\tau)^{-1} \ot \id \big)({R}) \; (T_{X,\tau} \KM{X,\tau} \ot 1) \\
	&= R_{Y,\eta}^{-1} \; (T_{Y,\eta}^{-1} \ot T_{Y,\eta}^{-1}) \; (1 \ot  T_{X,\tau} \KM{X,\tau}) \; \big( \Ad( T_{X,\tau} ) \circ  \theta_q(X,\tau)^{-1} \ot \id \big)({R}) \;  (T_{X,\tau} \KM{X,\tau} \ot 1) \\
	&= R_{Y,\eta}^{-1} \; (1 \ot{\KM{Y,\eta}}) \; \big( \psi_{Y,\eta} \ot \id \big)({R}) \; (\KM{Y,\eta} \ot 1),
	\end{align*}
	where the second and fourth equalities follows from \eqref{RX:factorized}.
\end{proof}

%%%%%%%%%%%%%%%%%%%%%%%%%%%

\subsection{Generalized reflection equation}

The following result is the analogue of Proposition~\ref{prop:twistedRE}.

\begin{theorem} \label{thm:kYeta:RE}
The operator $\KM{Y,\eta}$ satisfies the generalized reflection equation \eqref{eq:psi-Q-re}  
with respect to the twisting operator $\psi_{Y,\eta}$, \ie
\begin{align}\label{kYeta:RE}
\begin{split}
(\psi_{Y,\eta} \ot \psi_{Y,\eta})(R_{21}) \cdot (1 \ot{\KM{Y,\eta}}) \cdot&(\psi_{Y,\eta} \ot \id)(R) \cdot (\KM{Y,\eta} \ot 1)=\\=&
(\KM{Y,\eta} \ot 1) \cdot (\id \ot \psi_{Y,\eta})(R_{21}) \cdot (1 \ot{\KM{Y,\eta}}) \cdot R\,.
\end{split}
\end{align}
\end{theorem}

\begin{proof}
From the coproduct formula \eqref{kYeta:Delta}, 
one has
\begin{align*}
\Del^\op(\KM{Y,\eta}) =({R}_{Y,\eta})_{21}^{-1} \; (\KM{Y,\eta} \ot 1) \; (\id \ot \psi_{Y,\eta})({R}_{21}) \; (1 \ot{\KM{Y,\eta}})
\end{align*}
and, from \eqref{R:intw},
\begin{align*}
\Del^\op(\KM{Y,\eta}) =  R\cdot\Del(\KM{Y,\eta})\cdot R^{-1}
= R \;  {R}_{Y,\eta}^{-1} \; (1 \ot{\KM{Y,\eta}}) \; (\psi_{Y,\eta} \ot \id)({R}) \; (\KM{Y,\eta} \ot 1) \; R^{-1}.
\end{align*}
Therefore,
\begin{align}
(\KM{Y,\eta} \ot 1) \cdot (\id \ot \psi_{Y,\eta})&({R}_{21}) \cdot (1 \ot{\KM{Y,\eta}}) \cdot R =\\&= ({R}_{Y,\eta})_{21} \;  R \cdot  {R}_{Y,\eta}^{-1} \cdot (1 \ot{\KM{Y,\eta}}) \cdot (\psi_{Y,\eta} \ot \id)({R}) \; (\KM{Y,\eta} \ot 1).
\end{align}
Thus, the result follows from \eqref{psi:R}.
\end{proof}

%%%%%%%%%%%%%%%%%%%%%%%%%%%

\subsection{Cartan corrections of universal K-matrices}
%The universal K-matrices $\gKM{Y,\eta}{g}$} 
\label{sec:K:Cartanmod}
We describe a further modification of the standard K-matrix $\KM{X,\tau}$
associated to the elements $g \in (U_q\mfh)^{\mc W,\times}$ such that $\Ad(g)$
preserves $U_q\mfg\subset(U_q\mfg)^{\mc W_\int}$. Namely, we set
\begin{align}
	\label{kG:def}
	\gKM{Y,\eta}{g} \coloneqq g \cdot{\KM{Y,\eta}} 
	\qq\mbox{and}\qq
	\psi_{Y,\eta}^{g} \coloneqq \Ad(g) \circ \psi_{Y,\eta}\,. 
\end{align}

The operator $\gKM{Y,\eta}{g}$ remains a universal K-matrix.

\begin{theorem} \label{thm:kG}
	The operator $\gKM{Y,\eta}{g}$ satisfies the intertwining equation
	\begin{align}
		\label{kG:intw}
		\gKM{Y,\eta}{g} u &= \psi_{Y,\eta}^{g}(u){\KM{Y,\eta}}^{g}\,, 
	\end{align}
	for any $u \in U_q\mfk_{\bm \ga,\bm \si}$, the coproduct identity
	\begin{align}
		\label{kG:Delta}
		\Del(\gKM{Y,\eta}{g}) &= (R_{Y,\eta}^{g})^{-1}\cdot (1 \ot{\KM{Y,\eta}}^{g}) \cdot (\psi_{Y,\eta}^{g} \ot \id)(R) \cdot (\gKM{Y,\eta}{g} \ot 1)\,,
	\end{align}
	where $(R_{Y,\eta}^{g})^{-1}\coloneqq  \Del(g) \, R_{Y,\eta}^{-1} \, (g^{-1} \ot g^{-1})$,
	and the generalized reflection equation
	\eq{
		\label{kG:RE} 
		\begin{aligned}
			& (\gKM{Y,\eta}{g} \ot 1) \cdot (\id \ot \psi_{Y,\eta}^{g})({R}_{21}) \cdot (1 \ot{\KM{Y,\eta}}^{g}) \cdot {R} = \\
			&\qq \qq = (\psi_{Y,\eta}^{g} \ot \psi_{Y,\eta}^{g})({R}_{21}) \cdot (1 \ot{\KM{Y,\eta}}^{g}) \cdot (\psi_{Y,\eta}^{g} \ot \id)({R}) \cdot (\gKM{Y,\eta}{g} \ot 1).
		\end{aligned}
	}
\end{theorem}

\begin{proof}
	The identities \eqref{kG:intw}, \eqref{kG:Delta} and \eqref{kG:RE} follow by multiplying \eqref{kYeta:intw}, \eqref{kYeta:Delta} and \eqref{kYeta:RE} by $g$, $\Del(g)$ and $g \ot g$, respectively.
\end{proof}

\begin{remark}\label{rmk:gauge}
	Clearly, $g$ can be thought of as a gauge transformation {\em acting} on the cylindrical structure
	$(\psi_{Y,\eta}, R_{Y,\eta},\KM{Y,\eta})$, \ie we have $g\star(\psi_{Y,\eta}, R_{Y,\eta},\KM{Y,\eta})=(\psi_{Y,\eta}^g, R_{Y,\eta}^g,\KM{Y,\eta}^g)$. 
	In fact, the same result applies for any $g\in(U_q\mfg)^{\mc O^+_\int}$ such that $\Ad(g)$ preserves $U_q\mfg$ (cf.~\cite[Sec.~3]{AV22}). In particular, we have
	$(T_{Y,\eta}^{-1} T_{X,\tau})\star(\psi_{X,\tau}, R_{X,\tau},\KM{X,\tau})
	=(\psi_{Y,\eta}, R_{Y,\eta},\KM{Y,\eta})$.
\hfill \rmkend
\end{remark}

%%%%%%%%%%%%%%%%%%%%%%%%%%%%%%%%%%%%%%%%%%%%%%%%%%%%%%%%%%%%%%%%%

\subsection{An alternative choice} \label{sec:alternative}
From Remark~\ref{rmk:gauge}, it is clear that the universal K-matrix $\KM{X,\tau}$ and the twisting operator $\psi_{X,\tau}$ give rise to large family of universal K-matrices depending upon the choice
of a gauge transformation with some mild restrictions. In particular, we can recover immediately the
slightly different setup used in \cite{BK19}. Namely, instead of considering $\KM{Y,\eta}$ and $\psi_{Y,\eta}$ defined by \eqref{kYeta:def}-\eqref{psiYeta:def}, we can set 
\eq{ \label{kpsi:alt}
\gKM{Y,\eta}{\prime} \coloneqq (T_{Y,\eta} T_{X,\tau})\cdot \KM{X,\tau} \qq
\mbox{and}\qq 
\psi_{Y,\eta}^\prime \coloneqq \Ad(T_{Y,\eta} T_{X,\tau}) \circ \psi_{X,\tau}.
}
Proceeding as before, one verifies that the operator $\gKM{Y,\eta}{\prime}$
satisfies the intertwining equation
\[
\gKM{Y,\eta}{\prime} u = \psi_{Y,\eta}^\prime(u) \gKM{Y,\eta}{\prime} 
\]
for  any $u \in U_q\mfk_{\bm \ga,\bm \si}$, the coproduct identity
\eq{ \label{k:Del:alt}
\Del(\gKM{Y,\eta}{\prime}) = (R_{Y,\eta})_{21} \; (1 \ot \gKM{Y,\eta}{\prime}) \; (\psi_{Y,\eta}^\prime \ot \id)(R) \; (\gKM{Y,\eta}{\prime} \ot 1)\,,
}
and the generalized reflection equation
\begin{align}
(\gKM{Y,\eta}{\prime} \ot 1) \cdot (\id \ot \psi_{Y,\eta}^\prime)&(R_{21}) \cdot (1 \ot \gKM{Y,\eta}{\prime}) \cdot R =\\=& (\psi_{Y,\eta}^\prime \ot \psi_{Y,\eta}^\prime)(R_{21}) \cdot (1 \ot \gKM{Y,\eta}{\prime}) \cdot (\psi_{Y,\eta}^\prime \ot \id)(R) \cdot (\gKM{Y,\eta}{\prime} \ot 1)\,.
\end{align}

%%%%%%%%%%%%%%%%%%%%%%%%%%%

\subsection{Distinguished K-matrices} \label{sec:maxminGSat}\label{sec:GSatmin:finite}
By Sections~\ref{sec:parabolick} and \ref{sec:intw}, as $(Y,\eta)$ ranges through $\GSat(A)$, we obtain various universal K-matrices $\KM{Y,\eta}$. In the case $(Y,\eta)=(X,\tau)$, 
we recover the standard universal K-matrix $ \KM{X,\tau}$. From a representation theoretic point of view,
this choice is somewhat preferable, since  $\KM{X,\tau}\in(U_q\mfg)^{\mc O^+}$,\ie it acts on any  category $\mc O^+$ $U_q\mfg$-module. Note however that this is no longer true if $Y\neq X$.\\

In the case $(Y,\eta)=(X,\tau)$, the twisting operator $\psi_{X,\tau}$ is in 
general a complicated automorphism, since it coincides with $\theta_q^{-1}$. From the point of view of integrability theory, it is convenient to look for choices of $(Y,\eta)$, 
yielding a \emph{simple} form of the generalized reflection equation.
As a measure, since $\psi_{Y,\eta}=\theta_q(Y,\eta)^{-1}\circ\eta\circ\tau$, we consider the dimension of the subspace of fixed points in $\mfh'$.
Namely, we define a strict linear order on $\GSat(A)$ given by
\begin{align*}
(Y,\eta) < (Y',\eta') \qu 
&\Longleftrightarrow \qu \dim((\mfh')^{-\theta(Y,\eta)})<\dim((\mfh')^{-\theta(Y',\eta')}) .
\end{align*}
Recall that, since $\eta|_Y = \oi_Y$, the dimension of $(\mfh')^{-\theta(Y,\eta)}$ equals 
the restricted rank of $(Y,\eta)$, \ie the number of $\eta$-orbits in $I \backslash Y$.
There are two extreme cases.
\begin{enumerate}\itemsep0.25cm
\item The restricted rank is maximal. 
This corresponds to the choice $(Y,\eta) = (\emptyset,\id)$.
In this case, we have $\theta(\emptyset,\id)=\om$,
\[
\KM{\emptyset,\id}\coloneqq T_{\emptyset,\id}^{-1}\cdot T_{X,\tau}\cdot\KM{X,\tau}
\qq\mbox{and}\qq
\psi_{\emptyset,\id}=\Ad(T_{\emptyset,\id})^{-1}\circ\om\circ\tau
\]
Since $T_{\emptyset,\id}\in(U_q\h)^{\mc W,\times}$, by setting $g=T_{\emptyset,\id}$ in Theorem~\ref{thm:kG}, we obtain a distinguished universal K-matrix
\begin{equation}\label{eq:semistandard-KM}
\KM{\omega}\coloneqq T_{X,\tau}\cdot\KM{X,\tau}\qq\mbox{and}\qq \psi_{\omega}\coloneqq \om\circ\tau\,,
\end{equation}
which we refer to as the \emph{semi-standard} universal K-matrix. Note that in this case
the twisting operator $\psi_{\omega}$ is an involution on $U_q\mfg$ and we obtain the 
generalized reflection equation
	\eq{
	\begin{aligned}
		 (\KM{\om} \ot 1) \cdot (\id \ot \psi_{\om})({R}_{21}) \cdot (1 \ot\KM{\om}) \cdot {R} 
		= R \cdot (1 \ot\KM{\om}) \cdot (\psi_{\om}\ot \id)({R}) \cdot (\KM{\om} \ot 1).
	\end{aligned}
}

\item The restricted rank is minimal, \ie it equals the number of $\tau$-orbits in the 
complement of the largest $\tau$-stable subset of $I$ of finite type. If $A$ is of infinite type, 
there are in general several $(Y,\eta) \in \GSat(A)$ whose restricted rank is minimal.
On the other hand, if $A$ is of finite or affine type, the minimal restricted rank is 0 or 1, respectively,
and the choice is canonical as we describe below.

\item

If $\mfg$ is of finite type, $\GSat(A)$ has a unique minimal element given by $(I,\oi_I)$.
In this case, we get
\[
\KM{\scsop{fin}}\coloneqq T_{I,\oi_I}^{-1}\cdot T_{X,\tau}\cdot\KM{X,\tau}
\qq\mbox{and}\qquad
\psi_{\scsop{fin}} = \oi_I \circ \tau\,,
\]
since $\theta_q(I,\oi_I)=\id$. Note that the twisting operator is an involution on $U_q\mfg$ 
(cf.~\cite[Rmk 7.2]{BK19}).
Finally, we obtain the generalized reflection equation
\eq{ \label{kfinite:RE}
(\KM{\scsop{fin}} \ot 1) \cdot (\id \ot \psi_{\scsop{fin}})(R_{21}) \cdot (1 \ot \KM{\scsop{fin}}) \cdot R = 
R_{21} \cdot (1 \ot \KM{\scsop{fin}}) \cdot (\psi_{\scsop{fin}}\ot \id)(R) \cdot (\KM{\scsop{fin}} \ot 1).
}
\end{enumerate}

\begin{remark}
The case (iii) is considered in \cite[Corollary 7.7]{BK19}, where the opposition involution
$\oi_I$ is denoted $\tau_0$. More precisely, relying on the alternative formalism from Section ~\ref{sec:alternative}, the operator $\mc K$ constructed in \cite[Corollary 7.7]{BK19} is related to 
$\gKM{I,\oi_I}{\prime}$ by 
\[
\mc K ^{-1}= (\oi_I \circ \tau)\big( \gKM{I,\oi_I}{\prime} \big)
\]
under the additional constraints $\bm \ga = \bm \ga'$, $\bm \si = \bm \si'$, $\ga_{(\oi_I \circ \tau)(i)} = \ga_i$, and $\si_{(\oi_I \circ \tau)(i)} = \si_i$, cf.~\cite[(7.4)]{BK19}.
This is a consequence of the fact that
\[
\phi\big( U_q\mfk_{\bm \ga,\bm \si}(X,\tau) \big) = U_q\mfk_{\phi(\bm \ga),\phi(\bm \si)}(X,\tau)
\]
for involutive $\phi \in \Aut_X(A)$ commuting with $\tau$.
\hfill\rmkend
\end{remark}

%%%%%%%%%%%%%%%%%%%%%%%%%%%

\section{Spectral K-matrices for quantum affine $\mathfrak{sl}_2$} \label{s:affine}

In this section, we motivate our construction by discussing its application in the finite-dimensional representation theory of the quantum loop algebra $\UqLsl{2}$. 
The general case is treated in detail in \cite{AV22}. 
We show that the inversion of the spectral parameter on finite-dimensional representations can be realized in terms of a suitable choice of the twisting operators $\psi_{Y,\eta}$. 
The specialization of the corresponding universal K-matrix yields a formal solution of the generalized reflection equation with a spectral parameter. 
Finally, we prove that this construction gives rise to matrix solutions of the standard reflection equation which are formal series in the spectral parameter. 

\subsection{The quantum loop algebra $\UqLsl{2}$}
We set $\indI\coloneqq\{0,1\}$ and we consider the symmetric generalized Cartan matrix $A$
with $a_{01}=-2$. 
We denote the corresponding Kac-Moody algebra by $\wt{\mathfrak{sl}}_2$ and
its derived subalgebra by $\asl{2}$ (see \eg \cite{FR92}). 
We shall consider $0$ as the \emph{affine} node \cite[Ch.~6]{Ka90}.
The Lie algebra $\asl{2}$ is an extension of the loop algebra $L\mathfrak{sl}_2\coloneqq\mathfrak{sl}_2 \ot\C[t,t^{-1}]$ by a central element $c$. 
Similarly, the element $t_c$ is central in $U_q\asl{2}$ and the quotient $\UqLsl{2}\coloneqq U_q\asl{2}/(t_c-1)$ is known as the quantum \emph{loop} algebra of $\mathfrak{sl}_2$. 
As in the classical case, $\UqLsl{2}$ is endowed with a family of algebra homomorphisms $\ev{a}:\UqLsl{2}\to\Uqsl{2}$
($a\in\F^\times$) called \emph{evaluation homomorphisms} (see \eg \cite[Prop.~4.1]{CP91}) defined as follows:
\begin{align}
	\ev{a}(E_1) &= E, & \ev{a}(F_1) &= F & \ev{a}(t_1) &= t, \\
	\ev{a}(E_0) &= q^{-1}aF, & \ev{a}(F_0) &= qa^{-1}E, & \ev{a}(t_0) &= t^{-1}\,.
\end{align}  
Here we have denoted the Chevalley-Serre generators of $\Uqsl{2}$ by $E$, $F$, $t^{\pm 1}$, suppressing the subscript 1.
Note that $\UqLsl{2}$ is also endowed with a \emph{grading shift} automorphism (cf.~\cite{Dr86})
\begin{equation}\label{eq:grading-shift}
	\shift{z}: \UqLsl{2}[z,z^{-1}]\to\UqLsl{2}[z,z^{-1}]\,,
\end{equation}
where $\UqLsl{2}[z,z^{-1}]\coloneqq \UqLsl{2}\ten\F[z,z^{-1}]$,
given by $\shift{z}(t_{h})\coloneqq t_{h}$, $\shift{z}(E_{i})\coloneqq z^{\delta_{0i}} E_{i}$,
and $\shift{z}(F_{i})\coloneqq z^{-\delta_{0i}}F_{i}$. By specializing $z$
in $\F^\times$, we obtain a one-parameter family of automorphism of $\UqLsl{2}$ satisfying $\ev{a}=\ev{1}\circ\shift{a}$.

\subsection{Evaluation representations}

By pullback through $\ev{a}$, every irreducible finite-dimensional $\Uqsl{2}$-module is acted upon by $\UqLsl{2}$. 
Specifically, let $V_n$ be the $(n+1)$-dimensional irreducible $\Uqsl{2}$-module. 
For any $a\in\F^\times$, we obtain an irreducible (type $\bm 1$) $\UqLsl{2}$-module $V_n(a)\coloneqq\ev{a}^*(V_n)$, which is referred to as an {\em evaluation} representation.
Let $\Repfd(\UqLsl{2})$ be the category of finite-dimensional (type $\bm 1$) $\UqLsl{2}$-modules, which clearly contains every evaluation representation.
By \cite[Thm.~4.11]{CP91}, every irreducible module in $\Repfd(\UqLsl{2})$ arises as a tensor product of evaluation representation. 
Note that, while category $\mc O^+$ integrable $U_q\wt{\mathfrak{sl}}_2$-modules form a semisimple and braided category, $\Repfd(\UqLsl{2})$ is not semisimple nor braided (see \eg \cite[Ch.~12]{CP95}). 
However, as we briefly recall below, it is \emph{functionally} braided, since the universal R-matrix of $U_q\wt{\mathfrak{sl}}_2$ gives rise to a parameter-dependent operator on finite-dimensional $\UqLsl{2}$-modules (cf.\ \cite{FR92, KhT92, KS95, EM03}). 

\subsection{Spectral R-matrices}
The universal R-matrix does not immediately act on finite-dimensional $\UqLsl{2}$-modules, since the series $\wtR$ determined by $R = \ka_\id\cdot \wtR$ (cf.~Section \ref{sec:standardR}) does not necessarily converge. 
Relying on the grading shift, for any $V\in\Repfd(\UqLsl{2})$ with action $\pi_V:\UqLsl{2}\to\End(V)$, we consider the infinite-dimensional representation $\shrep{V}{z}\coloneqq V\ten\Lfml{\F}{z}$, with action given by $\pi_{V,z}\coloneqq\pi_V\circ\shift{z}$. 
Then, for any $V,W\in\Repfd(\UqLsl{2})$, we obtain an operator
\[
R_{VW}(z,w)\coloneqq\pi_{V,z}\ten\pi_{W,w}(R)\in\fml{\End(V\ten W)}{z^{-1},w}
\]
By the explicit description of $\wtR$ and \eqref{quasiR:def}, it follows that $R_{VW}(z,w)$
is a formal series in $w/z$, which we denote by $R_{VW}(w/z)$. 
For any $V_1,V_2,V_3\in\Repfd(\UqLsl{2})$, the specialization of the universal R-matrix on the tensor product $\shrep{V_1}{z^{-1}}\ten V_2\ten \shrep{V_3}{w}$ yields
a formal solution of the Yang-Baxter equation with a spectral parameter:
\begin{equation}\label{eq:z-YBE}
	\sRM{12}{z} \sRM{13}{zw} \sRM{23}{w} = \sRM{23}{w} \sRM{13}{zw} \sRM{12}{z}\, .
\end{equation}

Relying on a similar strategy, the universal K-matrices constructed in Section~\ref{s:km}
produce formal solutions of generalized reflection equations with a spectral parameter.

\subsection{Quantum pseudo-fixed-point subalgebras for $\UqLsl{2}$}
We shall consider the quantum pseudo-fixed-point subalgebras $U_q\mfk=U_q\mfk_{{\bm\ga},{\bm\si}}(X,\tau)\subset U_q\wt{\mathfrak{sl}}_2$, where ${\bm\ga}\in\Ga_q$ is such that $\ga_0\ga_1=1$, ${\bm\si}\in\Sigma_q$, and 
$(X,\tau)$ is one of the Satake diagrams $(\emptyset,\id)$, $(\{1\},\id)$, $(\emptyset,(0\,1))$, 
with $(0\,1)$ being the permutation of two nodes of the affine Dynkin diagram. 
Following \cite{BB17},
we refer to the corresponding 
subalgebras as the \emph{q-Onsager algebra}, \emph{invariant q-Onsager algebra} and \emph{augmented q-Onsager algebra}, respectively.
Note that $U_q\mfk(X,\tau)$ identifies with a 
coideal subalgebra in $\UqLsl{2}$, since $\theta(X,\tau)(c)=-c$.

\subsection{Spectral K-matrices for the q-Onsager algebra}\label{ss:ons}
We consider the Satake diagram $(X,\tau)=(\emptyset, \id)$. 
The corresponding coideal subalgebra $\Ons 
\subseteq\UqLsl{2}$ is generated by 
\[
B_0 \coloneqq F_0 - q^{-1} \ga_0 E_0 t_0^{-1} + \si_0 t_0^{-1}, 
\qq\mbox{and}\qq B_1 \coloneqq F_1 - q^{-1} \ga_1 E_1 t_1^{-1} + \si_1 t_1^{-1}.
\]
Following Section~\ref{sec:parabolick}, we choose the auxiliary Satake diagram $(Y,\eta)=(\{1\},\id)$ and we consider the universal K-matrix $K$ and the twisting operator $\psi$ given by the formulae \eqref{kYeta:def}
and \eqref{psiYeta:def}. 
Note that $\psi$ descends to an automorphism of $\UqLsl{2}$ and
satisfies $\shift{z}\circ\psi=\psi\circ\shift{1/z}$.
We get the following special case of \cite[Thm.~4.2.1]{AV22}.

\begin{theorem}\label{thm:km-ons}
\hfill
\begin{enumerate}\itemsep0.25cm
\item
For any $V\in\Repfd(\UqLsl{2})$, the universal K-matrix $K$ descends to a formal series
$\sKM{V}{z}\in\fml{\End(V)}{z}$ satisfying 
\begin{equation}\label{eq:spectral-intw}
	\sKM{V}{z}\pi_{V}(\shift{z}(u))=\pi_{V}(\psi(\shift{1/z}(u)))K_V(z)
\end{equation}
for any $u\in\Ons$, \ie it yields a formal $\Ons$-intertwiner $\sKM{V}{z}\colon V(z)\to V^\psi(1/z)$, where
$V^\psi\coloneqq\psi^*(V)$.
\item 
For any $V,W\in\Repfd(\UqLsl{2})$, the generalized reflection equation with a spectral parameter
holds:
\eq{ \label{eq:spectral-RE}
	\begin{aligned}
		\sKM{V}{z} \ten \id &\cdot \sRM{W^{\psi}V}{zw}_{21} \cdot \id \ten \sKM{W}{w} \cdot \sRM{VW}{\tfrac{w}{z}} =\\
		&= \sRM{W^{\psi}V^{\psi}}{\tfrac{w}{z}}_{21}  \cdot \id \ten\sKM{W}{w} \cdot \sRM{V^{\psi}W}{zw} \cdot \sKM{V}{z} \ten \id 
	\end{aligned}
}
\end{enumerate}
\end{theorem}

\begin{proof}
We shall prove that $\gKM{}{} = (T_{Y,\eta}^{-1}T_{X,\tau}^{})\cdot{\bm \ga}^{-1}\cdot\wtKM$ acts on $\shrep{V}{z}$. 
Any finite-dimensional $\UqLsl{2}$-module has a weight decomposition over 
$\Plat/(\Plat\cap\Q\delta)$ and is integrable as a 
$\Uqsl{2}$-module.
Thus, $(T_{Y,\eta}^{-1}T_{X,\tau}^{})\cdot{\bm \ga}^{-1}$
naturally descends to an element in $\End(V)$, since $\ga_0\ga_1=1$ and  $T_{Y,\eta}^{-1}T_{X,\tau}^{}$ is supported only on $\Uqsl{2}$.
Moreover, it is invariant under $\shift{z}$. Finally, as in the case of $\wtR$ \cite{Dr86}, 
the shifted quasi-K-matrix $\shift{z}(\wtKM)$ gives a formal series in $\fml{\End(V)}{z}$.
Therefore, we get
\[
\sKM{V}{z}\coloneqq\pi_{V,z}(K)\in\fml{\End(V)}{z}.
\]
Note that \eqref{eq:spectral-intw} follows from \eqref{kYeta:intw} and the identity $\shift{z}\circ\psi=\psi\circ\shift{1/z}$.
Similarly, \eqref{eq:spectral-RE} follows directly from \eqref{kYeta:RE}.
\end{proof}

A direct computation shows that $\psi$ is the identity on $\Uqsl{2}=\langle E_1, F_1, K_1\rangle$ and, for any $a\in\F^\times$, it satisfies $\ev{a}\circ\psi=\ev{q^2a^{-1}}$. 
Thus, we get the following special case of \cite[Thm.~7.2.1]{AV22}.

\begin{corollary}\label{cor:km-ons}
	Let $V,W\in\Repfd(\UqLsl{2})$ be evaluation representations at $a=q$. Then, 
	the standard reflection equation with a spectral parameter
	holds:
	\eq{ \label{eq:standard-spectral-RE}
		\begin{aligned}
			\sKM{V}{z} \ten \id & \cdot \sRM{WV}{zw}_{21} \cdot \id \ten \sKM{W}{w} \cdot \sRM{VW}{\tfrac{w}{z}} =\\
			&= \sRM{WV}{\tfrac{w}{z}}_{21}  \cdot \id \ten\sKM{W}{w} \cdot \sRM{VW}{zw} \cdot \sKM{V}{z} \ten \id 
		\end{aligned}
	}
\end{corollary}

\begin{proof}
It is enough to observe that, for any $n\geqslant 0$, 
\[
\psi^*(V_n(q))=(\ev{q}\circ\psi)^*(V_n)=\ev{q}^*(V_n)=V_n(q)\,.
\]
Thus, $V^\psi=V$, $W^\psi=W$, and \eqref{eq:standard-spectral-RE} follows from \eqref{eq:spectral-RE}.
\end{proof}

\begin{remark}
For any $a\in\F^\times$, we obtain an analogue of Corollary~\ref{cor:km-ons} by observing 
that $V_n(a)^{\psi}=V_n(q^2a^{-1})$.\hfill\rmkend
\end{remark}

\subsection{Spectral K-matrices for the invariant q-Onsager algebra}
We now consider the Satake diagram $(X,\tau)=(\{1\}, \id)$. Note that in this
case we have $\ga_0=1=\ga_1$.
The corresponding coideal 
subalgebra $\IOns$ 
is generated by $E_1,\, F_1,\, t_1^{\pm 1}$, and 
\[
B_0 \coloneqq F_0 - q^2 \Ad(\wt T_1)(E_0) t_0^{-1}\, .
\]
Following Section~\ref{sec:parabolick}, we choose the auxiliary Satake diagram $(Y,\eta)=(\{1\},\id)=(X,\tau)$ and we consider the universal K-matrix $K$ and the twisting operator $\psi$ given by the formulae \eqref{kYeta:def}. 
Note that $\psi$ is the same as in Section~\ref{ss:ons}. In particular, it descends to an automorphism of $\UqLsl{2}$, satisfies $\shift{z}\circ\psi=\psi\circ\shift{1/z}$, and is the identity on $\Uqsl{2}$. Then, the analogues of Theorem~\ref{thm:km-ons} and Corollary~\ref{cor:km-ons} 
hold for $\IOns$. 
The proofs are the same and therefore omitted.

\begin{remark}\label{rmk:semi-km-ons}
Other examples of spectral K-matrices for $\Ons$ and $\IOns$ can be obtained 
by choosing different auxiliary Satake diagrams. For instance, following Section~\ref{sec:GSatmin:finite}, one can choose $(Y,\eta)=(\emptyset,\id)$ and 
consider the {\em semi-standard} universal K-matrix and twisting operator given by the formulae \eqref{eq:semistandard-KM}, \ie $\gKM{}{}\coloneqq T_{\emptyset, (0\,1)}\cdot {\bm \ga}^{-1}\cdot\wtKM$ and $\psi\coloneqq\omega$. Note that, in this case, 
one has $\ev{a}\circ\omega=\omega\circ\ev{q^2a^{-1}}$ and, for any $n\geqslant 0$ and $a\in\F^\times$,
\[
\omega^*(V_n(a))=(\ev{a}\circ\omega)^*(V_n)=(\omega\circ\ev{q^2a^{-1}})^*(V_n)
\simeq\ev{q^2a^{-1}}^*(V_n)=V_n(q^2a^{-1})\,,
\]
where the third identity relies on the isomorphism of $\Uqsl{2}$-modules 
$\omega^*(V_n)\simeq V_n$.
Thus, the semi-standard universal K-matrix yields 
a formal intertwiner $V_n(az)\to V_n(\tfrac{q^2}{az})$ with respect to the action of the 
coideal subalgebra.
\hfill\rmkend
\end{remark}

\subsection{Spectral K-matrices for the augmented q-Onsager algebra}
Finally, we consider the Satake diagram $(X,\tau)=(\emptyset, (0\,1))$. 
The corresponding coideal subalgebra $\AOns$ is generated by
$t_0^{\pm 1} t_1^{\mp 1}=t_0^{\pm 2}$\,,
\[
B_0 \coloneqq F_0 - q \ga_0 E_1 t_0^{-1} \qq\mbox{and}\qq 
B_1 \coloneqq F_1 - q \ga_1 E_0 t_1^{-1}. 
\]
As in Remark~\ref{rmk:semi-km-ons}, we consider the semi-standard universal K-matrix and the twisting operator 
\[
\gKM{}{}\coloneqq T_{X,\tau}\cdot {\bm \ga}^{-1}\cdot\wtKM
\qq\mbox{and}\qq
\psi\coloneqq\omega\circ\tau\,.
\]
Note that the operator $T_{X,\tau}$ is in this case just a Cartan correction. Up to such correction,
$K$ and $\psi$ correspond to the standard universal K-matrix $K_{X,\tau}$ and $\theta_q(X,\tau)^{-1}$, respectively.\\

In this case, the procedure described in Section~\ref{ss:ons} does not immediately apply, since $\tau$ does not commute with the grading shift. 
To remedy this, we consider the \emph{principal} grading shift
\begin{equation}\label{eq:prin-grading-shift}
	\shift{z}^{\prin}: \UqLsl{2}[z,z^{-1}]\to\UqLsl{2}[z,z^{-1}]\,,
\end{equation}
given by $\shift{z}^{\prin}(t_{i})\coloneqq t_{i}$, $\shift{z}^{\prin}(E_{i})\coloneqq z E_{i}$ and $\shift{z}^{\prin}(F_{i})\coloneqq z^{-1}F_{i}$ for $i \in \{0,1\}$. 
Indeed, it satisfies $\shift{z}^{\prin}\circ\psi=\psi\circ\shift{1/z}^{\prin}$. 
For any $V\in\Repfd(\UqLsl{2})$ with action $\pi_V:\UqLsl{2}\to\End(V)$, we consider the infinite-dimensional representation $\shrep{V}{z}\coloneqq V\ten\Lfml{\F}{z}$,
with action given by $\pi_{V,z}\coloneqq\pi_V\circ\shift{z}^{\prin}$. 
With this correction, the analogue of Theorem~\ref{thm:km-ons} holds for $\AOns$.\\

Fix $a\in\F^\times$. 
Let ${\bm\beta}:\Qlat\to\F^\times$ be the group homomorphism given by  ${\bm\beta}(\alpha_0)=-qa^{-1}$ and ${\bm\beta}(\alpha_1)=-q^{-1}a$.
As in Section~\ref{ss:cartan-operators}, we obtain an algebra automorphism $\Ad({\bm\beta}): \UqLsl{2}\to\UqLsl{2}$. 
Following Section~\ref{sec:K:Cartanmod}, we consider the universal K-matrix and twisting operator given by \eqref{kG:def}, \ie
\[
\gKM{\bm\beta}{}\coloneqq {\bm\beta}\cdot T_{\emptyset,(0\,1)}\cdot {\bm \ga}^{-1}\cdot\wtKM
\qq\mbox{and}\qq
\psi_{\bm\beta}\coloneqq\Ad({\bm\beta})\circ\omega\circ\tau\,.
\]
By direct inspection, the twisting operator $\psi_{\bm\beta}$ satisfies
$\ev{a}\circ\psi_{\bm\beta}=\ev{q^2a^{-1}}$. Therefore, for any $n\geqslant 0$, 
we obtain $\psi_{\bm\beta}^*(V_n(a))= V_n(q^2a^{-1})$. In particular, for
$V=V_n(q)$, the universal K-matrix $K_{\bm\beta}$ specializes to a formal $\AOns$-intertwiner $V(z)\to V(1/z)$, yielding the analogue of Corollary~\ref{cor:km-ons} for $\AOns$.
It is to be expected that $K_{\bm \beta}$ is related to the \emph{generic K-matrices} for the augmented q-Onsager algebra given in \cite[Sec.~4.1.2]{BTs18}.

%%%%%%%%%%%%%%%%%%%%%%%%%%%%%%%%%%%%%%%%%%%%%%%%%%%%%%%%%%%%%%%%%


\begin{thebibliography}{BBBR95}

\bibitem[ATL18]{ATL18}
	A. Appel, V. Toledano Laredo,  
	{\it A $2$-categorical extension of Etingof-Kazhdan quantisation}. 
	Sel.~ Math. (N.S.) {\bf 24}, no.~4 (2018): 3529--3617.
	
\bibitem[ATL19a]{ATL19a}
	A. Appel, V. Toledano Laredo, 
	{\it Uniqueness of Coxeter structures on Kac-Moody algebras}.
	Adv. Math. {\bf 347} (2019): 1--104.

\bibitem[ATL19b]{ATL19b}
	A. Appel, V. Toledano Laredo, 
	{\it Coxeter categories and quantum groups}.
	Sel. Math. {\bf 25}, no.~3 (2019): 44. 	

\bibitem[ATL24a]{ATL15}
A. Appel, V. Toledano Laredo,
{\it Monodromy of the Casimir connection of a symmetrisable Kac-Moody algebra},
Invent. Math. {\bf 236} (2024), no.~2, 549--672. 

\bibitem[ATL24b]{ATL22}
A. Appel, V. Toledano~Laredo,
\emph{Pure braid group actions on category $\mathcal{O}$ modules}, Pure Appl. Math. Q. {\bf 20} (2024), no.~1, 29--79.
	
\bibitem[AV22]{AV22}
	A. Appel, B. Vlaar,
	{\it Trigonometric K-matrices for finite-dimensional representations of quantum affine algebras}, J. Eur. Math. Soc. (2025), published online first.
	

\bibitem[BBBR95]{BBBR95}
	V. Back-Valente, N. Bardy-Panse, H. Ben Massaoud, G. Rousseau,
	{\it Formes presque-d\'{e}ploy\'{e}es des alg\`{e}bres de Kac-Moody: Classification et racines relatives}.
	J. Algebra {\bf 171} (1995): 43--96.

\bibitem[BB17]{BB17}
P. Baseilhac\ and\ S. Belliard, {\it Non-Abelian symmetries of the half-infinite XXZ spin chain}. Nuclear Phys. B {\bf 916} (2017), 373--385.

\bibitem[BK15]{BK15}
	M. Balagovi\'{c}, S. Kolb,
	{\it The bar involution for quantum symmetric pairs}.
	Represent. Theory {\bf 19}, no.~8 (2015): 186--210.
	%{\tt arXiv:1409.5074}.
	
\bibitem[BK19]{BK19}
	M. Balagovi\'{c}, S. Kolb,
	{\it Universal K-matrix for quantum symmetric pairs}.
	Journal f\"{u}r die reine und angewandte Mathematik (Crelles Journal) 2019, no.~747 (2019): 299--353.
	%{\tt arXiv:1507.06276}.

\bibitem[Bri71]{Bri71}
	E. Brieskorn,
	{\it Die Fundamentalgruppe des Raumes der regulären Orbits einer endlichen komplexen Spiegelungsgruppe}.
	Inv. Math. {\bf 12}, no.~1 (1971): 57--61.
	
\bibitem[BS72]{BS72}
	E. Brieskorn\ and\ K. Saito, {\it Artin-Gruppen und Coxeter-Gruppen}. Invent. Math. {\bf 17} (1972), 245--271.

\bibitem[Bro12]{Bro12}
	A. Brochier, 
	{\it A Kohno-Drinfeld theorem for the monodromy of cyclotomic KZ connections}. 
	Comm. Math. Phys. {\bf 311}, no.~1 (2012): 55–96.
	
\bibitem[BTs18]{BTs18}
	P. Baseilhac, Z. Tsuboi,
	{\it Asymptotic representations of augmented q-Onsager algebra and boundary K-operators related to Baxter Q-operators}.
	Nucl.\ Phys.\ B {\bf 929} (2018): 397--437.

\bibitem[BW18]{BW18}
	H. Bao, W. Wang,
	{\it A new approach to Kazhdan-Lusztig theory of type B via quantum symmetric pairs}.
	Soci\'{e}t\'{e} math\'{e}matique de France (2018). 
	%{\tt arXiv:1310.0103}.

\bibitem[BW21]{BW21}
	H. Bao, W. Wang,
	{\it Canonical bases arising from quantum symmetric pairs of Kac-Moody type}.
	Comp. Math. {\bf 157}, no.~7 (2021), pp.1507-1537.
	
\bibitem[BZBJ18]{BZBJ18}
	D. Ben-Zvi, A. Brochier, D. Jordan, 
	{\it Quantum character varieties and braided module categories}. 
	Sel. Math {\bf 24}, no.~5 (2018): 4711--4748.

\bibitem[Ch84]{Ch84}
	I. Cherednik,
	{\it Factorizing particles on a half-line and root systems}.
	Theor. and Math. Phys. {\bf 61}, no.~1 (1984): 977--983.

\bibitem[Ch92]{Ch92}
	I. Cherednik,
	{\it Quantum Knizhnik-Zamolodchikov equations and affine root systems}.
	Comm. Math. Phys. {\bf 150} (1992): 109--136.

\bibitem[CM18]{CM18}
	K. De Commer and M. Matassa,
	{\it Quantum flag manifolds, quantum symmetric spaces and their associated universal K-matrices}. 
	Adv. in Math. {\bf 366} (2020): 107029.
	%{\tt arXiv:1809.08471}.

\bibitem[CP91]{CP91}
	V. Chari\ and\ A. Pressley, {\em Quantum affine algebras}. 
	Comm. Math. Phys. {\bf 142} (1991), no.~2, 261--283.
	
\bibitem[CP95]{CP95}
	V. Chari and A.N. Pressley,
	{\it A guide to quantum groups}.
	Camb.\ Univ.\ Press (1995).

\bibitem[Dav07]{Da07}
	A. Davydov,
	{\it Twisted automorphisms of Hopf algebras},
	Preprint at \href{http://arxiv.org/abs/0708.2757}{\tt arXiv:0708.2757}.

\bibitem[Del72]{Del72}
	P. Deligne, {\it Les immeubles des groupes de tresses g\'{e}n\'{e}ralis\'{e}s}, Invent. Math. {\bf 17} (1972), 273--302. 
	
\bibitem[Del90]{D90}
	P. Deligne, 
	{\it Cat\'{e}gories tannakiennes}.
	In {\it The Grothendieck Festschrift, Vol. II}, {\bf 87}, Progr. Math. (1990): 111--195.
	Birkh\"{a}user Boston, Boston, MA.

\bibitem[DG02]{DG02}
	G. Delius, A. George,
	{\it Quantum affine reflection algebras of type $d_n^{(1)}$ and reflection matrices}.
	 Lett. Math. Phys. {\bf 62} (2002): 211--217.
	%{\tt arXiv:math/0208043}. 
		
\bibitem[DK19]{DK19}
	L. Dobson, S. Kolb,
	{\it Factorisation of quasi K-matrices for quantum symmetric pairs}.
	Sel. Math. {\bf 25}, no.~4 (2019): 63.
	%{\tt arXiv:1804.02912}.

\bibitem[DKM03]{DKM03}
	J. Donin, P. Kulish, A. Mudrov,
	{\it On a universal solution to the reflection equation}.
	Lett. Math. Phys. {\bf 63} (2003): 179--194.
	%{\tt arXiv:math/0210242}.

\bibitem[DM03]{DM03}
	G. Delius, N. Mackay,
	{\it Quantum group symmetry in sine-Gordon and affine Toda field theories on the half-line}.
	Comm. Math. Phys. {\bf 233} (2003): 173--190.
	%{\tt arXiv:hep-th/0112023}.
	
\bibitem[tD98]{tD98}
	T. tom Dieck,
	{\it Categories of rooted cylinder ribbons and their representations}.
	J. reine angew. Math. {\bf 494} (1998): 36--63.

\bibitem[tDHO98]{tDHO98}
	T. tom Dieck and R. H\"{a}ring-Oldenburg,
	{\it Quantum groups and cylinder braiding}.
	Forum Math. {\bf 10}, no.~5 (1998): 619--639.

\bibitem[Dr85]{Dr85} 
	V. Drinfeld, 
	{\it Hopf algebras and the quantum Yang-Baxter equation}.
	 Soviet Math. Dokl. {\bf 32} (1985): 254--258.
	 
\bibitem[Dr86]{Dr86}
	V. Drinfeld, 
	{\it Quantum groups}.
	Proceedings of the International Congress of Mathematicians, Berkeley (1986), A. M. Gleason (ed): 798-820.
	Amer. Math. Soc., Providence, RI.

\bibitem[Dr90a]{Dr90a}
	V. Drinfeld, 
	{\it On almost cocommutative Hopf algebras}. 
	Leningrad Math. J. {\bf 1}, no.~2 (1990): 321--342; translated from Algebra i Analiz {\bf 1}, no.~2 (1989): 30--46.
	
\bibitem[Dr90b]{Dr90b}
	V. Drinfeld, 
	{\it On some unsolved problems in quantum group theory}.
	In {\it Quantum groups (Leningrad, 1990)}, 1--8, Lecture Notes in Math., 1510, Springer, Berlin.

\bibitem[E22]{E22}
P. Etingof, Private communication (2022).

\bibitem[EM03]{EM03}
	P. Etingof\ and\ A. Moura, 
	{\it Elliptic central characters and blocks of finite dimensional representations of quantum affine algebras}.
	Represent. Theory {\bf 7} (2003): 346--373.

\bibitem[Enr04]{E04}
	B. Enriquez, {\it Quasi-reflection algebras and cyclotomic associators,} Sel.~ Math. (N.S.) {\bf 13}, no.~3 (2007): 391--463. 

\bibitem[Enr10]{E10}
	B. Enriquez,
	{\it Half-balanced braided monoidal categories and Teichm\"{u}ller groupoids in genus zero}.
	Preprint at \href{http://arxiv.org/abs/1009.2652v2}{\tt arXiv:1009.2652v2}.	

\bibitem[ES18]{ES18}
	M.~Ehrig, C.~Stroppel,
	{\it Nazarov-Wenzl algebras, coideal subalgebras and categorified skew Howe duality}.
	Adv.~Math.~{\bf 331} (2018): 58--142.
	%{\tt arXiv:1310.1972v2}.

\bibitem[FR92]{FR92}
	I. Frenkel, N. Reshetikhin,
	{\it Quantum affine algebras and holonomic difference equations}.
	Commun. Math. Phys. {\bf 146} (1992): 1--60.

\bibitem[GK91]{GK91}
	A. M. Gavrilik, A. U. Klimyk, 
	{\it q-Deformed orthogonal and pseudo-orthogonal algebras and their representations}.
	Lett. Math. Phys. \textbf{21} (1991), no. 3, 215–220.

\bibitem[GZ94]{GZ94}
	S. Ghoshal, A. Zamolodchikov,
	{\it Boundary S-matrices and boundary state in two-dimensional integrable quantum field theory}.
	Int. J. Mod. Phys. A \textbf{09} (1994): 3841--3886.
	%{\tt arXiv:hep-th/9306002}.
		
\bibitem[He84]{He84}
	A. Heck,
	{\it Involutive automorphisms of root systems}.
	J. Math. Soc. Japan {\bf 36}, no.~4 (1984): 643--658.

%\bibitem[HO01]{HO01}
%	R. H\"{a}ring-Oldenburg,
%	{\it Actions of tensor categories, cylinder braids and their Kauffman polynomial}.
%	Topology Appl. {\bf 112} (2001): 297--314.

\bibitem[Jan96]{Jan96}
	J. C. Jantzen,
	{\it Lectures on quantum groups}.
	Grad. Stud. Math., vol. 6, Amer. Math. Soc.~(1996).

\bibitem[Ji86]{Ji86}
	M. Jimbo, 
	{\it A q-analogue of $U(\mfgl(N+1))$, Hecke algebra, and the Yang-Baxter equation}.
	Lett. Math. Phys. {\bf 11}, no.~3 (1986): 247--252.

\bibitem[Kac90]{Ka90}
	V. Kac,
	{\it Infinite dimensional Lie algebras}.
	3rd. ed., Cambridge University Press, 1990.

\bibitem[Kas90]{Kas90}
	M. Kashiwara, 
	{\it Crystalizing the q-analogue of universal enveloping algebras},
	Comm. Math. Phys. {\bf 133} (1990): 249--260.
	
\bibitem[KhT92]{KhT92}
	S. Khoroshkin, V. Tolstoy,
	{\it The universal R-matrix for quantum untwisted affine Lie algebras}.
	Funct. Anal. Appl. {\bf 26} (1992): 69--71.
	
\bibitem[Ko14]{Ko14}
	S. Kolb,
	{\it Quantum symmetric Kac-Moody pairs}.
	Adv. Math. {\bf 267} (2014): 395--469.
	%{\tt arXiv:1207.6036}. 
	
\bibitem[Ko20]{Ko20}
	S. Kolb,
	{\it Braided module categories via quantum symmetric pairs}.
	Proc. of the London Math. Soc. {\bf 121}, no.~1 (2020): 1--31.
	%{\tt arXiv:1705:04238}.	

\bibitem[Ko21]{Ko21}
	S. Kolb,
	{\it The bar involution for quantum symmetric pairs--hidden in plain sight}.
	Preprint at \href{http://arxiv.org/abs/2104.06120}{\tt arXiv:2104.06120}.

\bibitem[KR90]{KR90}
	A. N. Kirillov\ and\ N. Reshetikhin, 
	{\it $q$-Weyl group and a multiplicative formula for universal $R$-matrices}.
	Comm. Math. Phys. {\bf 134}, no.~2 (1990): 421--431.

\bibitem[KS92]{KS92}
	P.P. Kulish, E.K. Sklyanin,
	{\it Algebraic structures related to reflection equations}. 
	J. Phys. A: Math. Gen. {\bf 25}, no.~22 (1992): 5963.

\bibitem[KS95]{KS95}
	D. Kazhdan\ and\ Y. Soibelman, 
	{\it Representations of quantum affine algebras}.
	Sel.~ Math. (N.S.) {\bf 1}, no.~3 (1995): 537--595. 

\bibitem[KT09]{KT09}
	J. Kamnitzer, P. Tingley,
	{\it The crystal commutor and Drinfeld’s unitarized R-matrix}.
	Journal of Algebraic Combinatorics {\bf 29}, no.~3 (2009): 315--335.

\bibitem[KW92]{KW92}
	V. Kac, S. Wang,
	{\it On Automorphisms of Kac-Moody Algebras and Groups}.
	Adv. Math. {\bf 92} (1992): 129--195.

\bibitem[KY20]{KY20}
	S. Kolb, M. Yakimov,
	{\it Symmetric pairs for Nichols algebras of diagonal type via star products}.
	Adv. Math. {\bf 365} (2020): 107042.
	%{\tt arXiv:1901.00490}.

\bibitem[K93]{K93}
	T. Koornwinder,
	{\it Askey-Wilson Polynomials as Zonal Spherical Functions on the $SU(2)$ Quantum Group}.
	SIAM Journal on Mathematical Analysis {\bf 24}, no.~3 (1993): 795--813.

\bibitem[Le99]{Le99}
	G. Letzter,
	{\it Symmetric Pairs for Quantized Enveloping Algebras}.
	J. Algebra {\bf 220} (1999): 729--767. 
		
\bibitem[Le02]{Le02}
	G. Letzter,
	{\it Coideal Subalgebras and Quantum Symmetric Pairs}.
	New Directions in Hopf Algebras, MSRI publications 43, Cambridge University Press (2002): 117--166.
	%{\tt arXiv:math/0103228}. 
	
\bibitem[Le03]{Le03}
	G. Letzter,
	{\it Quantum Symmetric Pairs and Their Zonal Spherical Functions}.
	Transformation Groups {\bf 8}, no.~3 (2003): 261--292
	%{\tt arXiv:math/0204103}. 

\bibitem[LS90]{LS90}
	S. Z. Levendorskij\ and\ Ya. S. Soibelman, 
	{\it Some applications of the quantum Weyl groups}, J. Geom. Phys. {\bf 7}, no.~2 (1990): 241--254. 

\bibitem[Lus90]{Lus90}
	G. Lusztig,
	{\it Canonical bases arising from quantized enveloping algebras}.
	J. Amer. Math. Soc. {\bf 3}, no.~2 (1990): 447--498.

\bibitem[Lus94]{Lus94}
	G. Lusztig,
	{\it Introduction to quantum groups}.
	Birkh\"auser, Boston, 1994.
	
\bibitem[NDS97]{NDS97}
	M. Noumi, M.S. Dijkhuizen and T. Sugitani,
	{\it Multivariable Askey-Wilson polynomials and quantum complex Grassmannians}, 
	AMS Fields Inst. Commun. {\bf 14} (1997): 167-177.
	%{\tt arXiv:q-alg/9603014}.

\bibitem[NS95]{NS95} 
	M. Noumi, T. Sugitani, 
	{\it Quantum symmetric spaces and related q-orthogonal polynomials}, in: Group Theoretical Methods in Physics (ICGTMP) (Toyonaka, Japan, 1994),
	World Scientific Publishing, River Edge, NJ, (1995): 28--40.
	%{\tt arXiv:math/9503225}.
	
\bibitem[Rad92]{Rad92}
	D. E. Radford, {\em The structure of Hopf algebras with a projection}. 
	J. Algebra {\bf 92}, no.~2 (1985): 322--347.

\bibitem[RV16]{RV16}
	V. Regelskis, B. Vlaar,
	{\it Reflection matrices, coideal subalgebras and generalized Satake diagrams of affine type}.
	%{\tt arXiv:1602.08471}.

\bibitem[RV20]{RV20}
	V. Regelskis, B. Vlaar,
	{\it Quasitriangular coideal subalgebras of $U_q(\mfg)$ in terms of generalized Satake diagrams}.
	Bull. London Math. Soc. {\bf 54}, no.~4 (2020): 693--715.
	%{\tt arXiv:1807.02388}.

\bibitem[RV21]{RV21}
	V. Regelskis, B. Vlaar,
	{\it Pseudo-symmetric pairs for Kac-Moody algebras}.
	Preprint at \href{http://arxiv.org/abs/2108.00260}{\tt arXiv:2108.00260}.

\bibitem[Sk88]{Sk88}
	E. Sklyanin,
	{\it Boundary conditions for integrable quantum systems}.
	J. Phys. A: Math. Gen. {\bf 21} (1988): 2375--2389.

\bibitem[ST09]{ST09}
	N. Snyder, P. Tingley,
	{\it The half-twist for $U_q(\mathfrak g)$ representations}.
	Algebra \& Number Theory. {\bf 3}, no.~7 (2009): 809--834.

\bibitem[Ti10]{Ti10}
	P. Tingley, 
	{\it A formula for the $R$-matrix using a system of weight preserving endomorphisms}.
	Represent. Theory {\bf 14} (2010): 435--445.

\bibitem[Ti66]{Tits}
	J. Tits, 
	{\it Normalisateurs de tores I. Groupes de Coxeter \'{e}tendus}.
	J. Algebra \textbf{4} (1966): 96--116.

\end{thebibliography}
\end{document}